\theoremstyle{plain}
\newtheorem{thm}{Theorem}
\newtheorem{lem}{Lemma}
\newtheorem{pr}{Proposition}
\newtheorem{cor}{Corollary}
\theoremstyle{definition} 
\theoremstyle{remark} 
\newtheorem{rem}{Remark}
\newtheoremstyle{claim}
  {0.5\topsep}   % ABOVESPACE
  {0.5\topsep}   % BELOWSPACE
  {\itshape}  % BODYFONT
  {0pt}       % INDENT (empty value is the same as 0pt)
  {\itshape} % HEADFONT
  {.}         % HEADPUNCT
  {5pt plus 1pt minus 1pt} % HEADSPACE
  {#1 #3}          % CUSTOM-HEAD-SPEC
\theoremstyle{claim}
\let\AA\relax
\newcommand{\AA}{\mathbb{A}}
\newcommand{\afr}{\mathfrak{a}}
\newcommand{\bfr}{\mathfrak{b}}
\newcommand{\CC}{\mathbb{C}}
\newcommand{\EE}{\mathbb{E}}
\newcommand{\Gc}{\mathcal{G}}
\newcommand{\Ic}{\mathcal{I}}
\newcommand{\Kc}{\mathcal{K}}
\newcommand{\NN}{\mathbb{N}}
\newcommand{\OO}[1]{O\left(#1\right)}
\newcommand{\oo}[1]{o\left(#1\right)}
\newcommand{\QQ}{\mathbb{Q}}
\newcommand{\RR}{\mathbb{R}}
\newcommand{\TT}{\mathbb{T}}
\newcommand{\UU}{\mathbb{U}}
\newcommand{\xb}{\mathbf{x}}
\newcommand{\yb}{\mathbf{y}}
\newcommand{\ZZ}{\mathbb{Z}}
\newcommand{\Zc}{\mathcal{Z}}
\newcommand{\Pb}{\mathbb{P}}%->\Pr
\renewcommand{\Pr}[1]{\mathbb{P}\left[#1\right]}
\newcommand{\E}{\mathbb{E}}%->\Me
\newcommand{\Me}[1]{\mathbb{E}\left[#1\right]}
\newsavebox\myboxA
\newsavebox\myboxB
\newlength\mylenA
\newcommand*\xoverline[2][0.8]{
  \sbox{\myboxA}{$\m@th#2$}
  \setbox\myboxB\null
  \ht\myboxB=\ht\myboxA
  \dp\myboxB=\dp\myboxA
  \wd\myboxB=#1\wd\myboxA
  \sbox\myboxB{$\m@th\overline{\copy\myboxB}$}
  \setlength\mylenA{\the\wd\myboxA}
  \addtolength\mylenA{-\the\wd\myboxB}
  \ifdim\wd\myboxB<\wd\myboxA
     \rlap{\hskip 0.5\mylenA\usebox\myboxB}{\usebox\myboxA}
  \else
     \hskip -0.5\mylenA\rlap{\usebox\myboxA}{\hskip 0.5\mylenA\usebox\myboxB}
  \fi}
\newcommand{\xo}[1]{\xoverline{#1}}
\DeclareMathOperator{\tr}{{\mathrm{tr}}}
\let\Im\relax%set equal to \relaxx so that Latex thinds it is not defined
\DeclareMathOperator{\Im}{\mathrm{Im}}
\let\Re\relax%set equal to \relaxx so that Latex thinds it is not defined
\DeclareMathOperator{\Re}{\mathrm{Re}}
\newenvironment{EA}[1]{\begin{IEEEeqnarray*}{#1}}{\end{IEEEeqnarray*}}
\newcommand{\EAy}{\IEEEyesnumber}%to number the equation (line) in IEEEeqnarray*
\newcommand{\LRI}[4]{\,\leftidx{^{#3}}{#1}{^{#2}_{#4}}}
\newcommand{\Mes}[2]{(1-\mathbb{E}_{#1})\left[#2\right]}
\begin{document}
%=============================================================================
%			TITLEPAGE
%-----------------------------------------------------------------------------
\title{No outliers in the spectrum of the product of independent non-Hermitian random matrices with independent entries}
\author{Yuriy Nemish\\
\\ \textit{University of Toulouse, France}}
\date{\today}
%-----------------------------------------------------------------------------
\maketitle
%=============================================================================
%============================================================================
%                      ABSTRACT
%=============================================================================
\begin{abstract}
  We consider products of independent square random non-Hermitian matrices.
More precisely, let $n\geq 2$ and let $X_1,\ldots,X_n$ be independent $N\times N$ random matrices with independent centered entries (either real or complex with independent real and imaginary parts) with variance $N^{-1}$.
In \cite{GotzTikh} and \cite{OrouSosh} it was shown that the limit of the empirical spectral distribution of the product $X_1\cdots X_n$ is supported in the unit disk.
We prove that if the entries of the matrices $X_1,\ldots,X_n$ satisfy uniform subexponential decay condition, then the spectral radius of $X_1\cdots X_n$ converges to 1 almost surely as $N\rightarrow \infty$.

\end{abstract}
%=============================================================================
%qm Introduction and main results
%=============================================================================
\section{Introduction and main result}
\label{sec:introduction}

In this paper we consider the asymptotic behaviour of the spectral radius of a product of independent non-Hermitian random matrices. 
We start with a short overview of some recent results concerning products of non-Hermitian matrices.

One of the most studied models of non-Hermitian matrices is the Ginibre ensemble, in which the entries of the $N\times N$ matrix are centered i.i.d. Gaussian random variables with variance $N^{-1}$. 
Although first results concerning Ginibre matrices (\cite{Gini}) date back to 1965, many important results about the products of Ginibre matrices were obtained only quite recently. In \cite{BurdJaniWacl} Burda, Janik and Waclaw showed that the empirical spectral distribution (ESD) of a product of $n$ independent Ginibre matrices converges to the $n$th power of the uniform distribution on unit disk (circular law). 
In \cite{AkemBurd} Akemann and Burda studied the asymptotic behaviour of the $k$-point correlation functions for a product of independent Ginibre matrices.
The law of the eigenvalues of Ginibre matrix has a determinantal structure and due to this fact it is possible to compute the asymptotics of the correlation functions and to obtain many limiting properties of the distribution of the eigenvalues.
One interesting and important problem is to extend the above asymptotic properties of Ginibre matrices to a wider class of random matrices for which we do not have the determinantal structure, i.e. to show that these properties are universal.
The universality of the limiting ESD for products of independent non-hermitian matrices was shown by G\"{o}tze and Tikhomirov in \cite{GotzTikh}, and by  O'Rourke and Soshnikov  in \cite{OrouSosh}. They considered $N\times N$ matrices with independent \emph{identically distributed} centered entries of variance $N^{-1}$ (with an additional $(2+\epsilon)$th moment condition in \cite{OrouSosh}).
In our paper we show that for a large class of non-Hermitian random matrices with independent entries, for which the limiting empirical spectral measure is supported on the unit disk, the spectrum of the product of these matrices has no outliers, i.e. the spectral radius converges to 1.

We now define the model under consideration.
Fix an integer $n\geq 2$.
For each $a\in \llbracket 1,n\rrbracket$ and $N\in \NN$, let $X_a^N=(\LRI{x}{N}{a}{ij})_{1\leq i,j\leq N}$ be a matrix with independent centered entries (real or complex with independent real and imaginary parts) of variance $N^{-1}$.
The superscript $N$ will be mostly omitted.
We assume that the entries of the matrices satisfy the uniform subexponential decay condition, i.e. there exists $\theta>0$ independent of $N$ such that 
\begin{EA}{c}
  \EAy\label{eq:unif_decay_cond}
  \sup_{a\in\llbracket 1,n\rrbracket}
  \sup_{1\leq i,j\leq N}
  \Pr{|\sqrt{N}\LRI{x}{}{a}{ij}|>t}
  \leq
  \theta^{-1}e^{-t^{\theta}}
  .
\end{EA}

The main result of this work is the following statement.

\begin{thm}\label{thm:convg_spectral_radius}
  Let $X_1,\ldots,X_n$ be independent $N\times N$ matrices with independent centered entries (real or complex with independent real and imaginary parts) of variance $N^{-1}$ that satisfy the uniform subexponential decay condition \eqref{eq:unif_decay_cond}. 
 Then the spectral radius of the product $X_1\cdots X_n$ converges almost surely to 1 as $N\rightarrow \infty$.
\end{thm}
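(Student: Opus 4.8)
The plan is to establish the two bounds $\liminf_{N\to\infty}\rho(X_1\cdots X_n)\geq 1$ and $\limsup_{N\to\infty}\rho(X_1\cdots X_n)\leq 1$ separately, each almost surely; here $\rho$ denotes the spectral radius and I write $\Pi=\Pi^{(N)}=X_1\cdots X_n$.

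\emph{Lower bound.} The empirical spectral distribution $\mu_N$ of $\Pi$ converges weakly to $\mu_\infty$, the $n$-th power of the circular law: this is proved in \cite{GotzTikh,OrouSosh}, and it remains valid for merely independent (not identically distributed) entries because the limiting measure depends only on the variance profile, here constant equal to $N^{-1}$. Since $\mu_\infty(\{|z|>1-\delta\})>0$ for every $\delta\in(0,1)$, testing $\mu_N$ against a fixed nonnegative bump function supported in the annulus $\{1-\delta<|z|<2\}$ shows that $\Pi$ has an eigenvalue of modulus larger than $1-\delta$ for all large $N$, whence $\rho(\Pi)>1-\delta$; the uniform subexponential decay \eqref{eq:unif_decay_cond} yields exponential concentration for such linear eigenvalue statistics around their mean, so this holds for all large $N$ almost surely, and letting $\delta\downarrow0$ along a sequence gives $\liminf_N\rho(\Pi)\geq1$ a.s.

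\emph{Upper bound.} Here I would exploit the elementary inequality $\rho(A)\leq\|A^{k}\|\leq\|A^{k}\|_{HS}$ (operator norm, then Frobenius norm), valid for every $k\geq1$, applied with a power $k=k_N:=\lceil A_\epsilon\log N\rceil$ that grows slowly with $N$. Fix $\epsilon>0$. Then
\[
\Pr{\rho(\Pi)>1+\epsilon}\;\leq\;\Pr{\|\Pi^{k_N}\|_{HS}>(1+\epsilon)^{k_N}}\;\leq\;\frac{\E\,\|\Pi^{k_N}\|_{HS}^{2}}{(1+\epsilon)^{2k_N}}.
\]
Granted the second-moment estimate $\E\,\|\Pi^{k}\|_{HS}^{2}\leq N^{1+o(1)}$ uniformly over $k\leq C\log N$, choosing $A_\epsilon$ large enough (e.g., $A_\epsilon\log(1+\epsilon)>2$) makes the last quantity summable in $N$; Borel--Cantelli then gives $\rho(\Pi)\leq1+\epsilon$ for all large $N$ almost surely, and intersecting over $\epsilon\in\{1/m:m\in\NN\}$ yields $\limsup_N\rho(\Pi)\leq1$ a.s. Together with the lower bound this proves the theorem. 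Note that a single estimate handles eigenvalues at every scale, so no separate bound on $\|\Pi\|$ is needed.

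\emph{The main obstacle: the second-moment estimate.} The first step is to truncate the rescaled entries $\sqrt{N}\,(X_a)_{ij}$ at level $N^{\eta}$ for a small $\eta>0$; by \eqref{eq:unif_decay_cond} and a union bound the matrices are unchanged with probability $1-O(nN^{2}e^{-N^{\eta\theta}})$, which is summable, and recentering and rescaling alter the variances only by exponentially small amounts. For the truncated matrices one expands $\E\,\|\Pi^{k}\|_{HS}^{2}=\E\,\tr\!\big(\Pi^{k}(\Pi^{k})^{*}\big)$ as a sum over closed walks of length $2nk$ on $\llbracket1,N\rrbracket$ in a layered graph, the layer recording which of $X_1,\dots,X_n$ each step uses; only walks whose oriented edges can be matched into conjugate pairs within a single layer contribute. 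The leading term is the contribution of the unique non-crossing ("rainbow") pairing, which equals $N(1+o(1))$ and coincides with the free-probability value $\tau\big((c_1\cdots c_n)^{k}(c_1\cdots c_n)^{*k}\big)=1$ for free circular elements $c_1,\dots,c_n$; every other configuration carries a genus or higher-cumulant defect and, after summation, is bounded by $\mathrm{poly}(k)$, the high-order cumulants of the (truncated) entries being controlled via $\E|(X_a)_{ij}|^{2m}\leq(Cm)^{Cm}N^{-m}$, itself a consequence of \eqref{eq:unif_decay_cond}. This yields $\E\,\|\Pi^{k}\|_{HS}^{2}\leq N+\mathrm{poly}(k)=N^{1+o(1)}$ for $k=O(\log N)$. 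The delicate point is uniformity in the growing power $k$: one must verify that the combinatorial multiplicities in the expansion grow only polynomially in $k$ at each fixed genus — which is exactly the reason for working with $\tr(\Pi^{k}(\Pi^{k})^{*})$ rather than with $\tr((\Pi^{*}\Pi)^{k})$, whose limiting value grows exponentially in $k$ — so that they remain negligible against the factor $(1+\epsilon)^{2k_N}$.
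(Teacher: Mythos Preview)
Your route is genuinely different from the paper's. For the upper bound the paper never touches moments of $\Pi$: it linearises via the block matrix $X$ of \eqref{eq:linearisation_matrix}, hermitises to $(X-z)^*(X-z)$, and proves a Stieltjes-transform estimate near $w=0$ for every fixed $z$ with $|z|\ge 1+\delta$ (Theorem~\ref{thm:concentration_main}), following the Bourgade--Yau--Yin scheme; via Theorem~\ref{thm:two_condition_thm} this forces $s_1(X-z)\ge c>0$ and hence no eigenvalue of $X^n$ outside the unit disk. You instead run the Geman/Bai--Yin moment method directly on $\Pi$. When it goes through, the moment route is more elementary and self-contained; the Stieltjes route is heavier but delivers a quantitative local law as a by-product and, crucially here, treats the non-identically-distributed case with no new combinatorics.

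The genuine gap in your proposal is precisely the point the paper's Remark~1 leaves open. Your upper bound rests entirely on $\E\,\tr\bigl(\Pi^k(\Pi^k)^*\bigr)\le N^{1+o(1)}$ uniformly for $k=O(\log N)$, and your justification is only a sketch: truncate, expand over layered walks, keep the rainbow pairing, and declare the remainder to be ``$\mathrm{poly}(k)$''. Making that last assertion rigorous is the whole difficulty. In the i.i.d.\ case one shows that each additional defect (a crossing in the pairing, or an edge of multiplicity $\ge 3$) costs a factor $Ck^{c}/N$ and then sums a geometric series; you neither carry this out for the product structure nor explain why nothing breaks when third and higher cumulants vary from entry to entry, so that cancellations available in the i.i.d.\ expansion may fail. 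The paper's author writes that ``it is still not clear for the author if it can be obtained using the moments method in the general case'', and your outline does not settle that question. A secondary issue: in the lower bound, invoking ``exponential concentration for linear eigenvalue statistics'' of a non-Hermitian product is nonstandard (eigenvalues of non-normal matrices are not Lipschitz in the entries) and in any case unnecessary---once almost-sure weak convergence of $\mu_N$ is established for independent, non-identically-distributed entries (which is \emph{not} in \cite{GotzTikh,OrouSosh} as stated; those papers assume i.i.d., and the extension needs an argument along the lines of Proposition~\ref{thm:sosh_orou_product} and Lemma~\ref{lem:gaus_diff}), the a.s.\ lower bound follows immediately by testing against a bump function.
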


We comment briefly on the methods we used to prove Theorem~\ref{thm:convg_spectral_radius} and the structure of the  article. Firstly, we linearise our problem using the same trick as in \cite{AkemBurd}, \cite{BurdJaniWacl} and \cite{OrouSosh}. This allows us to study one $nN\times nN$ matrix $X$ with matrices $X_a, 1\leq a\leq n,$ as blocks, instead of the product $X_1\cdots X_n$. To this matrix we apply standard hermitization techniques and we use the approach of Bai and Silverstein to reduce the initial problem to the study of the Stieltjes transform of the matrices $(X-z)^*(X-z)$ around the origin for $|z|\geq 1+\delta >1$. All this is done in Section~\ref{sec:proofs1}. Due to the simple structure of the matrix $X$, it is possible to apply the machinery developed by Bourgade, Yau and Yin in \cite{BourYauYin} in order to obtain the concentration of the Stieltjes transform of the matrices $(X-z)^*(X-z)$ around the origin. 
In Sections \ref{sec:notation} and \ref{sec:tools} we introduce the notation and state some necessary preliminary results. 
In the last section we adjust the argument of Bourgade, Yau and Yin, so that we can use it for our model. 
In \cite{BourYauYin} a $N\times N$ matrix with independent centered entries of variance $N^{-1}$ was studied. 
However, there are many zero entries in the matrix $X$ that we consider, therefore some modifications are needed in order to get the concentration of the Stieltjes transform.
\begin{rem}
  The case of the product of matrices with independent \emph{identically distributed} entries can be efficiently studied using the moments method. 
For example, our result in the iid case can be derived following the proof in \cite{BaiYin}.
But it is still not clear for the author if it can be obtained using the moments method in the general case.
\end{rem}
\begin{rem}
  In two recent papers (\cite{AjanErdoKrug1} and \cite{AjanErdoKrug2}), Ajanki, Erd\"{o}s and Kr\"{u}ger studied the local properties of matrices of a very general type and were able to prove that for these models the local law holds  up to the optimal scale.
However, our hermitization matrix $(X-z)^*(X-z)$ does not fall into the class of matrices considered in these papers, therefore our result cannot be deduced directly from \cite{AjanErdoKrug1} and \cite{AjanErdoKrug2}.
\end{rem}
\begin{rem}
In a forthcoming paper we use the simple structure of the matrix $X$ to prove the concentration of the Stieltjes transform on the support of the limiting ESD of $(X-z)^*(X-z)$ and thus to get the local law for the products of non-Hermitian random matrices using basically the same techniques.
\end{rem}

%====================================================================================

%====================================================================================
%qm Proofs 1
%====================================================================================

\section{Structure of the proof of Theorem~\ref{thm:convg_spectral_radius}}
\label{sec:proofs1}
We show that for any $\delta>0$ almost surely, for $N$ sufficiently large, all the eigenvalues of the product $X_1\cdots X_n$ are contained in the disk of radius $1+\delta$. Theorem~\ref{thm:convg_spectral_radius} then follows easily.

Define the matrix
\begin{equation}\label{eq:linearisation_matrix}
  X=
  \begin{pmatrix}
    0 & X_1 & 0 &  \cdots & 0 \\
    0 & 0 & X_2 &  \cdots & 0 \\
     & & &\ddots &  \\
    0 & 0 & 0 & \cdots & X_{n-1}\\
    X_n& 0  & 0 &\cdots & 0
  \end{pmatrix}
  .
\end{equation}
The $n$-th power of matrix $X$ is an $n\times n$ block-diagonal matrix with matrices $X_{a+1}X_{a+2}\cdots X_{a+n},a\in \ZZ/n\ZZ$ on the diagonal. 
Therefore, $\mathrm{Sp}(X_1\cdots X_n)=\mathrm{Sp}(X^n)$, and we can restrict ourselves to the study of eigenvalues of the matrix $X$.

Let $s_1(\cdot)$ and $s_N(\cdot)$ denote the smallest and the largest singular values of a matrix with $N$-dependent size. Then it is sufficient to show the following.

\begin{thm}
  Let $X_1,\ldots,X_n$ be independent $N\times N$ matrices with independent centered entries (real or complex with independent real and imaginary parts) of variance $N^{-1}$ that satisfy the uniform subexponential decay condition \eqref{eq:unif_decay_cond}. 
  Define the matrix $X$ as in \eqref{eq:linearisation_matrix}.
  Let $\delta>0$.
  Then there exists $c>0$ such that almost surely for $N$ sufficiently large
  \begin{equation*}
    \inf_{|z|\geq 1+\delta} s_1(X-zI)
    \geq 
    c
    .
  \end{equation*}
\end{thm}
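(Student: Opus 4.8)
The plan is to hermitize, turn the spectral-radius problem into a ``hard edge'' estimate for the positive matrices $H_z:=(X-zI)^*(X-zI)$, and reduce that to a high-probability concentration bound for the Stieltjes transform of $H_z$ near $0$, uniform over $z$ in a compact annulus; a covering argument together with Borel--Cantelli then gives the almost-sure statement. First I would use the uniform subexponential decay \eqref{eq:unif_decay_cond}: standard operator-norm bounds for matrices with independent entries give $\|X_a\|\le3$ for every $a$ and all large $N$ almost surely, and since $\|X\|=\max_a\|X_a\|$ by the cyclic block form of $X$, also $\|X\|\le3$ a.s.\ eventually. Hence for $|z|\ge4$ we have the deterministic bound $s_1(X-zI)\ge|z|-\|X\|\ge1$, so it remains to control $z$ in the compact annulus $\mathcal A_\delta:=\{1+\delta\le|z|\le4\}$. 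Because $|s_1(A)-s_1(B)|\le\|A-B\|$, the map $z\mapsto s_1(X-zI)$ is $1$-Lipschitz on $\mathcal A_\delta$, so it suffices to prove $s_1(X-zI)\ge2c$ with probability at least $1-\exp(-N^{c'})$ for each $z$ in an $N^{-2}$-net $\mathcal N$ of $\mathcal A_\delta$ (of cardinality $O(N^4)$), and then take a union bound over $\mathcal N$, the Lipschitz estimate, and Borel--Cantelli in $N$.

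\textbf{Hermitization and the hard edge.} Fix $z\in\mathcal N$. Then $s_1(X-zI)^2=\lambda_{\min}(H_z)$, equivalently one works with the Hermitian $2nN\times2nN$ matrix $\widehat H_z=\begin{pmatrix}0&X-zI\\(X-zI)^*&0\end{pmatrix}$, whose eigenvalues are $\pm s_j(X-zI)$ and whose cyclic block structure (inherited from $X$, all of whose diagonal blocks vanish) is exactly the feature that makes the approach of \cite{BourYauYin} applicable after modification; the goal becomes that $\widehat H_z$ has no eigenvalue in $(-2c,2c)$. The limiting ESD $\rho_z$ of $H_z$ is governed by the associated self-consistent (matrix Dyson) equation, which depends on $z$ only through $|z|^2$; since the limiting spectral radius of $X$ (equivalently of $X_1\cdots X_n$) equals $1$ --- the fact underlying the support statements of \cite{GotzTikh,OrouSosh} --- every $z$ with $|z|>1$ lies strictly outside the support of the limiting ESD of $X$, which is equivalent to $0\notin\operatorname{supp}\rho_z$. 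A stability analysis of the self-consistent equation moreover gives $\operatorname{supp}\rho_z\subset[\tau(|z|),\infty)$ with $\tau$ continuous and increasing, so $\tau(|z|)\ge\tau(1+\delta)=:\tau(\delta)>0$ for all $z\in\mathcal A_\delta$, with a square-root edge at $\tau(|z|)$ and with $w\mapsto m_z^{\mathrm{lim}}(w):=\int(\lambda-w)^{-1}\,\mathrm d\rho_z(\lambda)$ bounded and analytic on a complex neighbourhood of $[0,\tau(\delta)/2]$. Set $c:=\tfrac14\sqrt{\tau(\delta)}$.

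\textbf{Concentration of the Stieltjes transform --- the main obstacle.} The heart of the matter, and the step I expect to be hardest, is to establish, by adapting the machinery of \cite{BourYauYin}, that for $w=E+\mathrm i\eta$ with $E\in[0,\tau(\delta)/2]$ and $\eta$ in a suitable range reaching down to nearly $N^{-1}$,
\[
\Pr{\Bigl|\tfrac{1}{nN}\tr(H_z-w)^{-1}-m_z^{\mathrm{lim}}(w)\Bigr|>N^{-\beta}}\le\exp\bigl(-N^{\beta'}\bigr)
\]
for some $\beta,\beta'>0$, uniformly over $z\in\mathcal A_\delta$ (the $z$-dependence being Lipschitz at polynomial scale, so it again suffices to combine the per-$z$ bound with the net $\mathcal N$). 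The scheme of \cite{BourYauYin} --- self-consistent equation for the resolvent entries, large-deviation bounds for the associated quadratic forms, and fluctuation averaging --- was designed for a dense $N\times N$ matrix with i.i.d.-type entries of variance $N^{-1}$, whereas a large proportion of the entries of $\widehat H_z$ vanish identically (the zero blocks of $X$). I would therefore re-derive the local law, the quadratic-form large deviations, and the stability of the self-consistent equation while carrying the block structure along, exploiting that the non-vanishing blocks still have independent entries obeying \eqref{eq:unif_decay_cond}; the delicate point is to keep the failure probability stretched-exponentially small (rather than merely polynomially small, as a crude moment argument would give), so that the union bound over $\mathcal N$ and, eventually, Borel--Cantelli both survive.

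\textbf{From the Stieltjes transform to the hard edge, and conclusion.} Given the previous step, the absence of eigenvalues of $H_z$ near $0$ follows by the Bai--Silverstein mechanism: concentration of $\tr(H_z-w)^{-1}$ around $nN\,m_z^{\mathrm{lim}}(w)=O(nN)$ for $w=E+\mathrm i\eta$, with $E$ ranging over an $\eta$-grid of $[0,\tau(\delta)/2]$, prevents $|\tr(H_z-w)^{-1}|$ from being as large as $\eta^{-1}$, which it would have to be if some $\lambda_j(H_z)$ lay within $\eta$ of $E$; choosing $\eta$ small enough then rules out any eigenvalue of $H_z$ in $[0,\tau(\delta)/2]$, i.e.\ $\lambda_{\min}(H_z)\ge\tau(\delta)/2\ge(2c)^2$, with probability at least $1-\exp(-N^{c'})$. (Equivalently this can be read off from edge rigidity: the local law pins $\lambda_{\min}(H_z)$ to the classical edge $\tau(|z|)\ge\tau(\delta)$ up to $o(1)$.) Finally I would union-bound over the $O(N^4)$ points of $\mathcal N$, extend to all of $\mathcal A_\delta$ via the $1$-Lipschitz bound and to $|z|\ge4$ via the trivial estimate, obtaining $\inf_{|z|\ge1+\delta}s_1(X-zI)\ge c$ with probability at least $1-\exp(-N^{c''})$, and conclude with Borel--Cantelli. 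The only serious difficulty is the concentration step: forcing the Bourgade--Yau--Yin machinery through despite the large number of deterministic zero entries in $X$.
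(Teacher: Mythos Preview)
Your proposal is correct and follows essentially the same route as the paper: reduce to a compact annulus via an operator-norm bound and Lipschitz continuity of $s_1$, apply a net argument, hermitize, invoke that the limiting measure $\nu_z$ avoids a neighbourhood of $0$ when $|z|\ge1+\delta$, and then prove concentration of the Stieltjes transform of $(X-z)^*(X-z)$ near $0$ by adapting the Bourgade--Yau--Yin machinery to the block structure, finishing with the Bai--Silverstein mechanism and Borel--Cantelli. The only noteworthy (and harmless) discrepancy is the probability scale: you aim for stretched-exponential error bounds, whereas the paper only obtains (and only needs) probability $\ge 1-N^{-D}$ for arbitrary $D$, which already survives the union bound over a polynomial-size net and Borel--Cantelli.
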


The next two lemmas  allow us to consider the lower bound of $s_1(X)$ only pointwise on a compact set, instead of taking the infimum over $\{|z|\geq 1+\delta\}$.

\begin{lem} Almost surely
  \begin{equation*}
    \limsup_{N\rightarrow \infty} s_N(X)
    \leq
    3
    .
  \end{equation*}
\end{lem}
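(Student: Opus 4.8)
The plan is to bound the largest singular value of $X$ by relating it to the operator norms of the blocks $X_1,\dots,X_n$. Since $X$ has the $X_a$ as its only nonzero blocks, and each row-block and column-block of $X$ contains exactly one $X_a$, a direct estimate gives $s_N(X)^2 = \|X\|^2 = \|X^*X\| \leq \max_{1\leq a\leq n}\|X_a\|^2 = \max_{1\leq a\leq n} s_N(X_a)^2$; indeed $X^*X$ is block-diagonal with blocks $X_a^*X_a$ (up to indexing in $\ZZ/n\ZZ$), so $s_N(X) = \max_{1\leq a \leq n} s_N(X_a)$ exactly. Thus it suffices to show that almost surely $\limsup_N s_N(X_a) \leq 3$ for each fixed $a$, and since $n$ is fixed this gives the claim with room to spare (the constant $2$ would do, but $3$ is convenient).

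The core input is then the well-known bound on the operator norm of a square random matrix with independent centered entries of variance $N^{-1}$ under a subexponential (or even just finite fourth moment) decay hypothesis: almost surely $\limsup_N \|X_a\| \leq 2$. First I would recall this; it follows from standard results (e.g. Bai–Yin type theorems, or the moment-method bound of Vershynin, or simply the trace-method computation $\E\,\tr (X_a^*X_a)^k$ combined with Borel–Cantelli after truncating the entries using \eqref{eq:unif_decay_cond}). The uniform subexponential decay condition \eqref{eq:unif_decay_cond} is more than enough to justify the truncation step: one replaces $\LRI{x}{}{a}{ij}$ by $\LRI{x}{}{a}{ij}\mathbf{1}_{|\sqrt N \LRI{x}{}{a}{ij}|\leq N^{\epsilon}}$, controls the difference in operator norm by a crude Frobenius-norm bound together with a union bound over the $nN^2$ entries (here \eqref{eq:unif_decay_cond} makes the probability that any entry exceeds $N^{\epsilon/\theta}$ summable in $N$), and then applies the moment method to the bounded matrix.

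Putting these together: fix $\epsilon>0$ small; almost surely, for $N$ large, $s_N(X) = \max_a s_N(X_a) \leq 2 + \epsilon < 3$, so $\limsup_N s_N(X) \leq 3$ almost surely. The only mild subtlety — the "main obstacle," though it is routine — is that the norm bound $\limsup \|X_a\|\leq 2$ is usually stated for matrices with identically distributed entries, whereas here the entries are only independent with a common variance and a uniform subexponential tail; one should check that the version with merely independent (non-identically-distributed) entries still holds, which it does because the trace-method moment bounds and the truncation argument only use the common variance and the uniform tail bound \eqref{eq:unif_decay_cond}, not identical distribution. Alternatively, one may invoke a non-i.i.d. operator-norm bound directly from the literature. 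Either way, once that input is in hand the lemma is immediate, and the constant $3$ leaves a comfortable margin so no delicate optimization is needed.
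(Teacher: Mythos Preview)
Your argument is correct and follows essentially the same line as the paper: the paper's proof is a one-line citation of Bai--Silverstein's operator-norm bound together with the subexponential decay condition \eqref{eq:unif_decay_cond}, and you unpack exactly this by observing that $X^*X$ is block-diagonal (so $s_N(X)=\max_a s_N(X_a)$) and then applying the standard $\limsup\|X_a\|\le 2$ result to each block. Your additional remark that the usual statements are for i.i.d.\ entries and need the mild extension to independent entries with common variance and uniform tails is a fair point, but this is precisely what the subexponential decay condition is there to cover, so no new idea is required.
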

\begin{proof}
  The bound can be obtained using \cite[Theorem~5.9]{BaiSilv} and the subexponential decay condition \eqref{eq:unif_decay_cond} for the entries of the matrices $X_1,\ldots,X_n$.
\end{proof}
It follows from the above lemma that it is sufficient to show that $s_1(X-z)\geq c$ holds on the set $1+\delta\leq |z|\leq 6$.
Indeed, if $|z|> 6$ then
\begin{equation*}
  s_1(X-z)
  =
  \inf_{\substack{y\in\CC^{nN}\\\|y\|=1}}\|(X-z)y\|
  \geq
  \inf_{\substack{y\in\CC^{nN}\\\|y\|=1}}(\|zy\|-\|Xy\|)
  \geq
  6-\sup_{\substack{y\in\CC^{nN}\\\|y\|=1}}\|Xy\|
  =
  6-s_N(X)
  \geq
  1
\end{equation*}
for $N$ large enough.

The next lemma shows that $s_1(X-z)$ is Lipschitz continuous with respect to $z$.
\begin{lem} For any $z,z'\in\CC$
  \begin{equation*}
  \left|s_1(X-z)-s_1(X-z')\right|
  \leq
  |z-z'|
  .
  \end{equation*}
\end{lem}
\begin{proof}
  Follows from \cite[formula~7.3.13]{HornJohn}.
\end{proof}

Therefore, using the $\varepsilon$-net argument from \cite[proof of Theorem~5.7]{OrouRenf}, we see that our initial Theorem~\ref{thm:convg_spectral_radius} can be deduced from the following theorem.

\begin{thm}\label{thm:least_eigenvalue_hermitised}
  Let $X_1,\ldots,X_n$ be independent $N\times N$ matrices with independent centered entries (real or complex with independent real and imaginary parts) of variance $N^{-1}$ that satisfy the uniform subexponential decay condition \eqref{eq:unif_decay_cond}. 
  Define the matrix $X$ as in \eqref{eq:linearisation_matrix}.
  Let $\delta>0$.
  Then there exists $c>0$ such that the following holds.
  For any $z\in\CC$ such that  $1+\delta\leq |z|\leq 6$, almost surely for $N$ sufficiently large, the interval $[0,c]$ does not contain any eigenvalue of the matrix $(X-z)^*(X-z)$.  
\end{thm}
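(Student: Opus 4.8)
The plan is to establish a quantitative lower bound on the smallest eigenvalue of the Hermitization $H_z := (X-z)^*(X-z)$ by controlling its Stieltjes transform near the origin. Concretely, if $m_N(w) = \frac{1}{nN}\tr (H_z - w)^{-1}$ denotes the Stieltjes transform of the empirical spectral distribution of $H_z$, then to show that $[0,c]$ contains no eigenvalue it suffices to show that $\Im m_N(\eta i)$ (or the full resolvent) stays bounded as $\eta \downarrow 0$ at some fixed small scale, more precisely that with overwhelming probability $\Im m_N(w)$ is $O(1)$ for $w = E + i\eta$ with $E \in [0,c]$ and $\eta$ a small constant. Indeed, the number of eigenvalues of $H_z$ in $[0,c]$ is bounded (up to constants) by $nN \cdot \eta \cdot \Im m_N(c + i\eta)$ once $\eta \asymp c$, so a bound $\Im m_N \le C$ forces that count below $1$ for $c$ small enough, hence there are no eigenvalues at all. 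The Borel–Cantelli lemma then upgrades the high-probability statement to an almost-sure one.

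The first step is to identify the deterministic limit. The Hermitization $H_z$ has a block structure inherited from \eqref{eq:linearisation_matrix}; its limiting spectral distribution is governed by a system of self-consistent (Dyson) equations for the block-diagonal entries of the resolvent. One verifies from the structure of $X$ — each $X_a$ has i.i.d.-type entries with variance $N^{-1}$, and the variance profile of the $nN \times nN$ matrix $X$ is flat within blocks — that the limiting self-consistent equation has a unique solution $m(w)$ analytic in the upper half-plane, and that for $|z| \ge 1 + \delta$ the density of the limiting measure vanishes in a neighbourhood $[0, 2c_0]$ of the origin, with $c_0 = c_0(\delta) > 0$. This is where the condition $|z| \ge 1+\delta$ enters: it is exactly the statement that $0$ lies outside the support of the limiting ESD of $(X-z)^*(X-z)$, which is consistent with the circular-law support being the unit disk. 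I would extract this gap, together with the stability (invertibility of the relevant linearized operator) of the self-consistent equation on the spectral region $\{ \Re w \in [0, c_0],\ 0 < \Im w \le 1 \}$, from the analysis of the earlier sections (Sections~\ref{sec:notation}, \ref{sec:tools}) and standard results on Dyson equations for matrices with variance profiles.

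The main work — and the main obstacle — is the second step: proving that the actual Stieltjes transform $m_N(w)$ concentrates around the deterministic solution $m(w)$ on this region down to scale $\eta = c_0$. Here I would adapt the self-consistent-equation / fluctuation-averaging machinery of Bourgade–Yau–Yin \cite{BourYauYin}. One writes a Schur-complement expansion for the diagonal blocks of $(H_z - w)^{-1}$, shows that the random "error" terms (the off-diagonal resolvent sums, the deviation of $\frac{1}{N}\sum |x_{ij}|^2$ from its mean, etc.) are small with overwhelming probability using the subexponential decay hypothesis \eqref{eq:unif_decay_cond} and large-deviation bounds for quadratic forms, and then invokes the stability of the self-consistent equation to conclude $|m_N - m| \to 0$. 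The crucial complication, flagged in the introduction, is that $X$ has many identically-zero entries (the block structure), so the variance profile is not flat and several steps of \cite{BourYauYin} must be redone with a block-dependent accounting: one must track the resolvent block by block, verify that the relevant stability operator remains invertible despite the degenerate variance profile, and make sure the fluctuation-averaging still closes. Once the concentration $\Im m_N(c_0 + i c_0) \le 2\Im m(c_0 + i c_0) + o(1)$ holds with overwhelming probability, and since $\Im m(c_0 + i c_0)$ is small (the limiting density vanishes near $0$), the eigenvalue-counting argument of the first paragraph with $c = c_0$ finishes the proof, and Borel–Cantelli gives the almost-sure statement uniformly over the fixed $z$.
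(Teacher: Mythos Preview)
Your overall strategy matches the paper's: identify the deterministic limit $m_c$, verify that for $|z|\ge 1+\delta$ the limiting spectral measure $\nu_z$ has a gap $[0,2c_0]$ at the origin, adapt the Bourgade--Yau--Yin self-consistent-equation machinery to the block structure of $X$ in order to prove concentration of $m_N$ around $m_c$, and finish via Borel--Cantelli. Your identification of the difficulties (degenerate variance profile, block-by-block bookkeeping, stability of the linearised operator) is also accurate.

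The genuine gap is in the scale at which you work. A bound $\Im m_N(E+i\eta)\le C$ at a \emph{fixed constant} scale $\eta\asymp c_0$ cannot exclude individual eigenvalues: your own counting inequality gives only
\[
\#\{\lambda_j\in[0,c]\}\ \lesssim\ nN\,\eta\,\Im m_N(c+i\eta),
\]
and no $N$-independent choice of $c>0$ makes the right-hand side smaller than $1$. Even replacing $C$ by $2\Im m_c(c_0+ic_0)\sim c_0$ still yields a count of order $nNc_0^2\to\infty$. The point is that a single eigenvalue at $E$ contributes only $\tfrac{1}{nN\eta}$ to $\Im m_N(E+i\eta)$; to detect it against the background $\Im m_c\sim\eta$ (and against the bulk eigenvalues) one must push $\eta$ down to a mesoscopic scale and control $|m_N-m_c|$ to that precision.

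This is exactly what the paper does. Theorem~\ref{thm:two_condition_thm} (the Bai--Silverstein argument) requires the quantitative bound
\[
\sup_{E\in[0,c]}\bigl|m_N(z,E+iN^{-1/2})-m_c(z,E+iN^{-1/2})\bigr|=o(N^{-1/2}),
\]
i.e.\ concentration at scale $\eta=N^{-1/2}$ with error $o(\eta)$; then a putative eigenvalue in $[0,c/2]$ forces $\eta^{-1}\Im(m_N-m_c)\ge 1/n+o(1)$, a contradiction. Obtaining this $o(N^{-1/2})$ error is the content of Theorem~\ref{thm:concentration_main}, and it needs more than the macroscopic local law $|m_N-m_c|=o(1)$ that your sketch produces: after the weak concentration (Section~\ref{sec:weak_concentration}) one must run an additional fluctuation-averaging step (Section~\ref{sec:strong_concentration}, via the abstract decoupling lemma of \cite{PillYin}) to beat the naive error $\Psi$. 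Your proposal stops at the macroscopic concentration, which is a necessary input but not sufficient to conclude.
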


Denote by $\lambda_1\leq\cdots\leq \lambda_{nN}$ the eigenvalues of the matrix $(X-z)^*(X-z)$ and let $\nu_{z,N}$ be its empirical spectral measure, i.e. for $A\subset \RR$
\begin{equation*}
  \nu_{z,N}(A)
  =
  \frac{1}{nN}\sum_{j=1}^{nN}1_A(\lambda_j)
  .
\end{equation*}
To study the eigenvalues of $(X-z)^*(X-z)$, we introduce its Stieltjes transform
\begin{equation*}
  m(z,w)
  =
  \frac{1}{nN}\tr ((X-z)^*(X-z)-w)^{-1}
  =
  \frac{1}{nN}\sum_{j=1}^{nN}\frac{1}{\lambda_j-w}
  , \quad 
  w\in\CC_+
  .
\end{equation*}
Before continuining, we state some results concerning the limit of $m(z,w)$ and the weak convergence of $\nu_{z,N}$.

\begin{pr}\label{thm:sosh_orou_product}
  There exist a deterministic function $m_c:\CC\times\CC\rightarrow \CC$ and a family of  deterministic measures $\{\nu_z,\, z\in\CC\}$ on $\RR_+$ such that
  \begin{itemize}
  \item[(1)] Almost surely, $m(z,w)$ converges to $m_c(z,w)$ as $N\rightarrow \infty$,
  \item[(2)] Almost surely, $\nu_{z,N}$ converges weakly to $\nu_z$ uniformly in every bounded region of $z$.
  \end{itemize}
\end{pr}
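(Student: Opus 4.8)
\emph{Approach.} The statement is the Hermitization input that, for matrices with i.i.d.\ entries, was obtained by G\"{o}tze--Tikhomirov \cite{GotzTikh} and by O'Rourke--Soshnikov \cite{OrouSosh}. The plan is to note first that the derivation of the limit $m_c$ and of the limiting measures $\nu_z$ in those works uses only the common variance $N^{-1}$ of the entries together with a moment/decay hypothesis that is weaker than \eqref{eq:unif_decay_cond}; hence it applies verbatim to independent, not necessarily identically distributed, entries, and quoting it is the economical route. Below I describe the self-contained Stieltjes-transform argument it amounts to.

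\emph{Step 1: self-consistent equation and identification of the limit.} Fix $z$ and $w\in\CC_+$, put $H=H_z=(X-z)^*(X-z)$ and $G=G(w)=(H-w)^{-1}$, and partition $G$ into $N\times N$ blocks $G_{ab}$, $a,b\in\llbracket 1,n\rrbracket$, following the block structure of $X$. Since $X^*X$ is block diagonal while $X$ and $X^*$ are cyclically block-tridiagonal, $H$ is cyclically block-tridiagonal with the scalar $|z|^2$ added on the diagonal. I would then apply Schur-complement formulas with respect to individual rows and columns inside each block and use that the entries of every $X_a$ are independent, centered, of variance $N^{-1}$, to obtain approximate fixed-point relations for the normalized block traces $g_a(w)=\tfrac1N\tr G_{aa}(w)$: the off-diagonal blocks have vanishing normalized trace, and $(g_1,\dots,g_n)$ satisfies a closed system of $n$ equations up to an error that tends to $0$ in probability. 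Because all the variance profiles coincide, the limiting deterministic system is invariant under cyclic permutation of the $g_a$; the standard stability/uniqueness argument for such systems (the solution must be a vector of Stieltjes transforms of probability measures on $\RR_+$) forces $g_1=\dots=g_n$, so $m(z,w)=\tfrac1n\sum_a g_a(w)$ converges in probability to $m_c(z,w)$, the solution of a scalar equation in $z$ and $w$. This $m_c(z,\cdot)$ is analytic on $\CC_+$, maps $\CC_+$ into $\CC_+$, and satisfies $m_c(z,w)\sim -1/w$ as $w\to\infty$, so it is the Stieltjes transform of a probability measure $\nu_z$ on $\RR_+$ (its values for $w$ off the positive real axis being fixed by the Stieltjes transform formula).

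\emph{Steps 2--3: almost sure convergence, all $w$, weak convergence uniformly in $z$.} Modifying one row of $X$ changes $m(z,w)$ by $O\bigl(1/(N\Im w)\bigr)$, so Azuma--Hoeffding applied to the martingale that reveals the rows of $X_1,\dots,X_n$ one at a time makes $\Pr{|m(z,w)-\E m(z,w)|>\varepsilon}$ summable in $N$; truncating the rare large entries via \eqref{eq:unif_decay_cond} and applying Borel--Cantelli gives $m(z,w)-\E m(z,w)\to0$ almost surely, which together with $\E m(z,w)\to m_c(z,w)$ proves (1) for fixed $(z,w)$. Running this over a countable dense set of $w\in\CC_+$ and invoking Vitali's theorem (the functions $m(z,\cdot)$ are uniformly bounded on compacts of $\CC_+$) yields (1) for all $w$ on one probability-one event. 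Pointwise a.s.\ convergence of the Stieltjes transforms then gives $\nu_{z,N}\Rightarrow\nu_z$ almost surely by the continuity theorem. For the uniformity in $z$ I would use that, with $\Im w$ fixed, both $z\mapsto m(z,w)$ (through $\partial_zG=-G(\partial_zH)G$ and the a.s.\ bound on $s_N(X)$) and $z\mapsto m_c(z,w)$ are Lipschitz with an almost surely bounded constant; then, exactly as in the $\varepsilon$-net argument used earlier to pass from pointwise to uniform control of $s_1$, pointwise a.s.\ convergence on a countable dense subset of a bounded region of $z$ upgrades to uniform a.s.\ convergence, and doing this along the fixed countable dense set of $w$ from the previous step gives (2).

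\emph{Main obstacle.} The delicate step is Step 1: setting up the self-consistent system for the sparse block matrix $X$ with genuinely controlled (not merely heuristic) error terms, and proving stability and uniqueness of the limiting system so that $m_c(z,w)$ and $\nu_z$ are well defined for every $z$. Much of this coincides with the concentration machinery that the rest of the paper develops for $|z|\ge 1+\delta$; since here only the weaker qualitative conclusions are needed and these hold for all $z$, appealing to \cite{GotzTikh,OrouSosh}, whose hypotheses are implied by \eqref{eq:unif_decay_cond}, is the cleanest way to settle it.
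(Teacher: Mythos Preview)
Your overall strategy---reduce to convergence of the expectation via a bounded-differences concentration inequality, then identify the limit---matches the paper's, and your Azuma--Hoeffding step on rows is essentially the paper's McDiarmid step. The substantive difference is in how the non-i.i.d.\ case is handled. You assert that the proofs in \cite{GotzTikh,OrouSosh} ``apply verbatim'' once one has common variance $N^{-1}$ and a moment bound, but this glosses over a real point: \cite[Lemma~14]{OrouSosh}, for instance, uses the identical-distribution assumption to conclude that all the block partial traces have the same expectation, and this symmetry is what collapses the $n$ equations to the single scalar equation for $m_c$. Your alternative route---derive the self-consistent system for $(g_1,\dots,g_n)$ directly and argue by stability/uniqueness that the limits must coincide---could work, but it is only sketched, and you yourself flag it as the main obstacle.

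The paper closes this gap differently and more explicitly: it proves a Lindeberg-type interpolation (Lemma~\ref{lem:gaus_diff}) showing that $\max_a \bigl|\EE[\LRI{m}{}{a}{G}]-\EE[\LRI{m}{}{a}{\hat G}]\bigr|\le CN^{-1/2}$, where $\hat G$ is the resolvent for a matrix with the same block structure but i.i.d.\ nonzero entries. This reduces the non-i.i.d.\ problem to the i.i.d.\ case already treated in \cite{OrouSosh} and imports the equality of block traces from there rather than extracting it from a stability argument. So the ingredient missing from your write-up is precisely this comparison lemma (or, alternatively, a completed stability/uniqueness argument for the self-consistent system).
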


The case when all non-zero entries of the matrix $X$ are independent and \emph{identically distributed} was proven by O'Rourke and Soshnikov in \cite{OrouSosh} under the condition of finite $(2+\varepsilon)$th moment.
This proof can be easily adapted for the matrices with non-identically distributed entries satisfying the uniform subexponential decay condition, therefore we omit the details and provide only the main steps of the proof:
\begin{itemize}
\item[(i)] Firstly, following the argument of \cite[Lemma~27]{OrouSosh} we can show that in order to obtain the convergence of $\nu_{z,N}$ uniformly in bounded regions of $z$ it is enough to prove that uniformly in bounded regions of $z$ $|m(z,w)-m_c(z,w)|$ converges to zero as $N\rightarrow \infty$;
\item[(ii)] next, from the McDiarmid's concentration inequality (\cite{Mcdi}) we get that it is enough to show the convergence of the expected value $\EE[m(z,w)]$ to $m_c(z,w)$;
\item[(iii)] from Lemma~\ref{lem:gaus_diff} below we see that the limits of the expectations of the Stieltjes transforms for matrices with identically distributed and non-identically distributed entries coinside, therefore Proposition~\ref{thm:sosh_orou_product} follows now from the proofs of Theorem~15 and Lemma~27 in \cite{OrouSosh}.
\end{itemize}
We now state a lemma showing that $\EE[m(z,w)]$ can be approximated by the expectation of the Stieltjes transform of a matrix with iid entries.
Let $\hat{X}$ be $nN\times nN$ random matrix having the same block structure as the matrix $X$ but with identically distributed non-zero entries.
Suppose that $\LRI{\hat{X}}{}{12}{11}$ has the same distribution as $\LRI{{X}}{}{12}{11}$.
Denote by $\hat{G}(w,z)$ and $\LRI{m}{}{}{\hat{G}}$  corresponding resolvent matrice and Stieltjes transform. 
As this result will be later used in Section~\ref{sec:weak_concentration}, we state it in the terms of normalised partial traces of the resolvent rather than in the terms of the Stieltjes transform.
More precisely, if we have a resolvent matrices $G$ and $\hat{G}$, then for $1\leq a\leq n$ we define the normalised partial traces
\begin{equation*}
  \LRI{m}{}{a}{G}
  =
  \frac{1}{N}\sum_{i=(a-1)N+1}^{aN} G_{ii} 
  ,\quad
    \LRI{m}{}{a}{\hat{G}}
  =
  \frac{1}{N}\sum_{i=(a-1)N+1}^{aN} \hat{G}_{ii} 
  .
\end{equation*}
The result is the following lemma, proof of which can be found in the appendix.
\begin{lem}\label{lem:gaus_diff}
For any $c>0$ there exists $C>0$ such that for any $|z|\leq c$ and $w=E+\sqrt{-1}\eta$ with $0\leq E\leq c, \eta = c^{-1}$ we have
\begin{equation}
  \max_{1\leq a\leq n}|\E[\LRI{m}{}{a}{\hat{G}}]-\E[\LRI{m}{}{a}{G}]|
    \leq
    C N^{-1/2}.
\end{equation}
  %   For any $\zeta>0$  with $\zeta$-high probability
  % \begin{equation*}
  %   \max_{a\in\ZZ/n\ZZ}\sup_{w\in\tilde{S}_{\tilde{Q}_{\zeta}}\cap\{\eta=\tau/2\}} \max\{|\LRI{m}{}{a}{G}-\LRI{m}{}{a}{\hat{G}}|,|\LRI{m}{}{a}{\Gc}-\LRI{m}{}{a}{\hat{\Gc}}|\}
  %   \leq
  %   \varphi^{Q_{\zeta}}\frac{1}{\sqrt{N}}.
  % \end{equation*}
\end{lem}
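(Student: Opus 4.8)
The goal is to compare, at a single fixed spectral parameter $w$ with $\eta = c^{-1}$ bounded away from $0$, the expected partial traces of the resolvents $G = ((X-z)^*(X-z)-w)^{-1}$ and $\hat G = ((\hat X-z)^*(\hat X-z)-w)^{-1}$, where $\hat X$ has i.i.d.\ nonzero entries with the common law of $\LRI{x}{}{a}{11}$ but the same variance profile and block structure as $X$. The natural route is a Lindeberg-type swapping argument: enumerate the $nN^2$ independent nonzero entries of $X$ (grouped into the $n$ blocks), and replace them one at a time by the corresponding entry of $\hat X$, bounding the change in $\E[\LRI{m}{}{a}{G}]$ at each step by the third-order Taylor remainder of the resolvent in that single entry. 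Because the first and second moments of an entry of $X$ and of $\hat X$ agree (same mean $0$, same variance $N^{-1}$), the zeroth, first and second order terms of the expansion cancel in expectation, leaving only third-order terms.

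First I would set up the resolvent expansion. Hermitize by working with $H(z) = \begin{pmatrix} 0 & X-z \\ (X-z)^* & 0\end{pmatrix}$ (or directly with $(X-z)^*(X-z)$); a single entry $\LRI{x}{}{a}{ij}$ appears linearly in $X$, hence in $H$, so writing $H = H^{(0)} + \xi \Delta$ where $\Delta$ has $O(1)$ rank and norm and $\xi = \LRI{x}{}{a}{ij}$, the standard identity $\partial_\xi G = -G \Delta G$ gives $|\partial_\xi^k \LRI{m}{}{a}{G}| \le C_k N^{-1} \|\Delta\|^k \|G\|^{k+1} \cdot (\text{rank factors})$. Since $\|G\| \le \eta^{-1} = c$ is deterministic and bounded (the key point that makes the lemma easy — we are away from the spectrum), and $\Delta$ has bounded norm, the third derivative is bounded by $C N^{-1}$ times a bounded quantity. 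Taking expectations, each swap contributes $O(N^{-1}) \cdot \E[|\xi|^3]$; by the subexponential decay \eqref{eq:unif_decay_cond}, $\E[|\xi|^3] = O(N^{-3/2})$. Summing over the $O(N^2)$ entries gives a total error $O(N^2 \cdot N^{-1} \cdot N^{-3/2}) = O(N^{-1/2})$, which is exactly the claimed bound.

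The main technical point — and the place where some care is needed rather than a true obstacle — is controlling the third-order remainder term uniformly: in a genuine Lindeberg swap the remainder is evaluated at an intermediate matrix where one entry has been replaced by $t\xi$ for $t\in[0,1]$, or one must truncate the heavy-tailed part of $\xi$ first. So I would first truncate each entry at level $N^{-1/2+\epsilon}$ (the subexponential tail makes the truncation error, after the necessary recentering and rescaling to restore mean $0$ and variance $N^{-1}$, negligible — of order $N^{-D}$ for any $D$), and then on the truncated model the intermediate matrices still have resolvent norm $\le C$ because $\eta$ is fixed, so the derivative bounds hold deterministically along the whole swap path. One subtlety specific to this model: the entry $\Delta$ corresponding to $\LRI{x}{}{a}{ij}$ sits in an off-diagonal block, so $\Delta G$ mixes the block structure, but since we only need an $O(N^{-1})$-per-step bound and $\Delta$ is still rank one (rank two after Hermitization), the block structure costs nothing. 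Finally I would note that the real-vs-complex distinction and the independence of real and imaginary parts are handled by expanding in the real and imaginary parts separately, matching their second moments as well, which is guaranteed by the hypothesis that $\LRI{\hat X}{}{11}{11}$ has the same distribution (hence the same real/imaginary covariance structure) as $\LRI{X}{}{11}{11}$ and that the variance profile is flat.
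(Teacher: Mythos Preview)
Your proposal is correct and follows essentially the same moment-matching strategy as the paper, though the implementation differs in a standard way. The paper uses a continuous interpolation $X(s)=\sqrt{s}\,X+\sqrt{1-s}\,\hat X$ and the Khorunzhy--Khoruzhenko--Pastur cumulant expansion (their Proposition~\ref{pr:gaus_diff}) in place of your discrete Lindeberg swap plus Taylor expansion; because the cumulant formula already bounds the remainder by $C\,\E[|V|^3]\sup|\Phi''|$, the paper avoids the truncation step you introduce. The paper also works directly with $G=((X-z)^*(X-z)-w)^{-1}$ rather than Hermitizing, and computes the entry-derivatives explicitly: the third derivative $\sum_i D_{kl}^3\,\LRI{G}{}{a}{ii}$ is written as a finite sum of products of matrix elements of $G$, $Y_zG$, $GY_z^*$, $Y_zGY_z^*$, whose spectral norms are bounded by $O(\eta^{-2})=O(1)$; summed over $i$ via Cauchy--Schwarz this gives the same $O(1)$ bound you obtain from the rank-one structure of $\Delta$. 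The arithmetic ($n N^2$ entries, third moment $O(N^{-3/2})$, factor $N^{-1}$ from the normalized trace) and the conclusion are identical.
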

In Proposition~\ref{thm:sosh_orou_product} we identified the limits of $m(z,w)$ and $\nu_{z,N}$.
Theorem~\ref{thm:two_condition_thm} below allows us to reduce our study of the singular values of the matrix $X-z$ to the study of $m(z,w)$ and $\nu_z$. 
This method was introduced by Bai and Silverstein and the proof can be found in \cite[Section~6.2.5]{BaiSilv} or in \cite[Section~6.5]{OrouRenf}.
For the reader's convenience we give a proof in our particular case.
\begin{thm}\label{thm:two_condition_thm}
  Fix $\delta>0$.
  Suppose the following two conditions hold
  \begin{itemize}
  \item[(i)] there exists $c>0$ such that for any $z\in\CC, |z|\geq 1+\delta$,
    \begin{equation*}
      \nu_z([0,c])=0;
    \end{equation*}
  \item[(ii)] for any $z\in\CC, 1+\delta\leq |z|\leq 6$, almost surely
    \begin{equation}\label{eq:two_condition_thm_1}
      \sup_{E\in [0,c]}|m(z,E+\sqrt{-1}N^{-1/2})-m_c(z,E+\sqrt{-1}N^{-1/2})|
      =
      \oo{\frac{1}{\sqrt{N}}}
      .
    \end{equation}
  \end{itemize}
Then, for any $z\in\CC, 1+\delta\leq|z|\leq 6$, almost surely, the interval $[0,c/2]$ does not contain any eigenvalue of $(X-z)^*(X-z)$.

\end{thm}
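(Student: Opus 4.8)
The plan is to argue by contradiction and to use that a single eigenvalue of $(X-z)^*(X-z)$ lying in the gap $[0,c/2]$ already forces the Stieltjes transform $m(z,\cdot)$ to have a detectable singular behaviour at the mesoscopic scale $\eta=N^{-1/2}$, which condition (ii) rules out. Fix $z$ with $1+\delta\le|z|\le6$ and work on the probability-one event on which (ii) holds. Assume that for infinitely many $N$ the matrix $(X-z)^*(X-z)$ has an eigenvalue $\lambda_j=\lambda_j(N)\in[0,c/2]$; I will derive a contradiction, which shows that for all large $N$ there is no such eigenvalue, and since (ii) holds almost surely this gives the claim.

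The first ingredient is what (i) gives for the limiting objects. Because $\nu_z$ charges no point of $[0,c]$, the map $w\mapsto m_c(z,w)=\int(x-w)^{-1}\,d\nu_z(x)$ extends holomorphically across the real interval $(0,c)$, and for $E\in[0,c/2]$ the distance from $E$ to $\operatorname{supp}\nu_z$ is at least $c/2$; hence $\Im m_c(z,E+\sqrt{-1}N^{-1/2})=\int\frac{N^{-1/2}}{(x-E)^2+N^{-1}}\,d\nu_z(x)$ is of order $N^{-1/2}$, with an implicit constant controlled by the position of $\operatorname{supp}\nu_z$. The second ingredient is the elementary pole bound: writing $m(z,w)=\frac1{nN}\sum_k(\lambda_k-w)^{-1}$, every summand has non-negative imaginary part, so keeping only the $k=j$ term at $w=\lambda_j+\sqrt{-1}N^{-1/2}$ gives $\Im m(z,\lambda_j+\sqrt{-1}N^{-1/2})\ge\frac1{nN}\cdot\frac{N^{-1/2}}{N^{-1}}=\frac1{n\sqrt N}$. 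Since $\lambda_j\in[0,c]$, condition (ii) applies at $E=\lambda_j$ and forces $|m(z,\lambda_j+\sqrt{-1}N^{-1/2})-m_c(z,\lambda_j+\sqrt{-1}N^{-1/2})|=o(N^{-1/2})$; comparing imaginary parts, $\frac1{n\sqrt N}\le\Im m_c(z,\lambda_j+\sqrt{-1}N^{-1/2})+o(N^{-1/2})$, and multiplying by $\sqrt N$ and letting $N\to\infty$ contradicts the bound on $\Im m_c$ from the first ingredient.

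The step I expect to be the crux is precisely this last quantitative comparison, i.e.\ making sure that the $O(N^{-1/2})$ coming from $\Im m_c$ near the edge of the gap is genuinely smaller than the $\frac1{n\sqrt N}$ produced by the stray eigenvalue: the crude bound $\Im m_c(z,E+\sqrt{-1}N^{-1/2})\le N^{-1/2}/(c/2)^2$ is not by itself enough, and one has to use the actual location and edge regularity of $\nu_z$ for $1+\delta\le|z|\le6$ (equivalently, quantitative control of the gap and of $m_c$ through it), or else replace the pointwise argument by its integrated version: if $[0,c/2]$ contained eigenvalues, integrating the inequality over $E\in[0,c/2]$ would make $\int_0^{c/2}\Im m(z,E+\sqrt{-1}N^{-1/2})\,dE$ exceed $\int_0^{c/2}\Im m_c(z,E+\sqrt{-1}N^{-1/2})\,dE=O(N^{-1/2})$ by too much, again via (ii). Everything else — the holomorphic extension of $m_c$ through the gap, the pole estimate for $\Im m$, and extracting the ``almost surely, for $N$ large'' conclusion from the probability-one event of (ii) — is routine.
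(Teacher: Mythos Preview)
Your overall strategy---contradiction via the imaginary part of the Stieltjes transform at scale $\eta=N^{-1/2}$---is the same as the paper's, and you have correctly isolated the sticking point: the crude bound $\Im m_c(z,E+\sqrt{-1}N^{-1/2})\le 4c^{-2}N^{-1/2}$ and the single-eigenvalue lower bound $\Im m\ge (n\sqrt N)^{-1}$ are both of order $N^{-1/2}$, so no contradiction emerges from comparing them directly. Unfortunately neither of your proposed fixes closes the gap. The integrated version (b) fails because a single stray eigenvalue $\lambda_j\in[0,c/2]$ contributes only
\[
\frac{1}{nN}\int_0^{c/2}\frac{N^{-1/2}}{(E-\lambda_j)^2+N^{-1}}\,dE\;\le\;\frac{1}{nN}\int_{\RR}\frac{N^{-1/2}}{u^2+N^{-1}}\,du\;=\;\frac{\pi}{nN}
\]
to $\int_0^{c/2}\Im m\,dE$, which is much smaller than the $o(N^{-1/2})$ tolerance that (ii) allows; the bulk eigenvalues in $(c,\infty)$ contribute order $N^{-1/2}$ to both sides and swamp this signal. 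Fix (a) would amount to showing $\int(x-E)^{-2}\,d\nu_z(x)<1/n$ on $[0,c/2]$, which is neither part of the hypotheses nor obviously true for the relevant $\nu_z$.

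The paper closes the gap with one additional qualitative ingredient you did not invoke: the almost-sure weak convergence $\nu_{z,N}\to\nu_z$ from Proposition~\ref{thm:sosh_orou_product}. From (ii) one has $\int\frac{d\nu_{z,N}(\lambda)-d\nu_z(\lambda)}{(E-\lambda)^2+N^{-1}}=o(1)$ uniformly for $E\in[0,c]$. Split this integral over $[0,c]$ and $[0,c]^{\complement}$. On $[0,c]^{\complement}$, for $E\in[0,c/2]$ the kernel $\lambda\mapsto((E-\lambda)^2+N^{-1})^{-1}$ is uniformly bounded by $4/c^2$ and equicontinuous, so weak convergence forces that piece to $o(1)$. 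On $[0,c]$, condition (i) says $\nu_z$ contributes nothing, leaving $\int_{[0,c]}\frac{d\nu_{z,N}(\lambda)}{(E-\lambda)^2+N^{-1}}$; if some eigenvalue $\lambda_j$ lies in $[0,c/2]$, evaluating at $E=\lambda_j$ gives at least $\frac{1}{nN}\cdot N=\frac{1}{n}$, contradicting $o(1)$. The role of weak convergence is precisely to cancel the bulk contribution that your pointwise and integrated comparisons could not separate from the stray eigenvalue.
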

\begin{proof}
From condition (ii) we have that
  \begin{EA}{ll}
    |\Im (m(z,E+\sqrt{-1}N^{-1/2})-m_c(z,E+\sqrt{-1}N^{-1/2}))|
    &=
    \left|\Im\left( \int \frac{ d \nu_{z,N}(\lambda) }{E+\sqrt{-1}N^{-1/2}-\lambda}-\int \frac{d \nu_{z}(\lambda) }{E+\sqrt{-1}N^{-1/2}-\lambda}\right)\right|
    \\
    &=
    \int \frac{N^{-1/2} (d\nu_{z,N}(\lambda)-d\nu_{z}(\lambda)) }{(E-\lambda)^2+N^{-1}}
    \\
    &=
    \oo{N^{-1/2}}
    .
  \end{EA}
Therefore
\begin{equation}\label{eq:two_cond_thm_pr_2}
  \sup_{E\in [0,c]} \int \frac{d\nu_{z,N}(\lambda)-d\nu_{z}(\lambda) }{(E-\lambda)^2+N^{-1}}
  =
  \oo{1}
  .
\end{equation}
Consider separately the above integral over $[0,c]$ and $[0,c]^{\complement}$, so that
\begin{equation}\label{eq:two_cond_thm_pr_1}
    \int \frac{d\nu_{z,N}(\lambda)-d\nu_{z}(\lambda) }{(E-\lambda)^2+N^{-1}}
    =
    \int_{[0,c]} \frac{d\nu_{z,N}(\lambda) }{(E-\lambda)^2+N^{-1}}
     + \int_{[0,c]^{\complement}} \frac{d\nu_{z,N}(\lambda)-d\nu_{z}(\lambda) }{(E-\lambda)^2+N^{-1}}
     ,
\end{equation}
where we used that $\nu_z[0,c]=0$.
From the weak convergence of $\nu_{z,N}$ we have that the second integral in \eqref{eq:two_cond_thm_pr_1} converges uniformly  to zero as
\begin{equation*}
    \sup_{E\in [0,c/2]}\left|\int_{[0,c]^{\complement}} \frac{d\nu_{z,N}(\lambda)-d\nu_{z}(\lambda) }{(E-\lambda)^2+N^{-1}}\right| 
    \rightarrow 0
    ,\quad
    N\rightarrow \infty.
\end{equation*}
Using the definition of $\nu_{z,N}$ we can rewrite the first term in \eqref{eq:two_cond_thm_pr_1} as
\begin{equation*}
    \int_{[0,c]} \frac{d\nu_{z,N}(\lambda) }{(E-\lambda)^2+N^{-1}}
    =
    \frac{1}{Nn} \sum_{i:\lambda_i\in [0,c]} \frac{1}{(E-\lambda_i)^2+N^{-1}}
    .
\end{equation*}
If there exists $i\in \llbracket 1,nN\rrbracket$ such that $\lambda_i \in [0,c/2]$, then
\begin{equation*}
  \sup_{E\in [0,c/2]}
  \int_{[0,c]} \frac{d\nu_{z,N}(\lambda) }{(E-\lambda)^2+N^{-1}}
  \geq
  \frac{1}{n}
  ,
\end{equation*}
which contradicts \eqref{eq:two_cond_thm_pr_2}.
\end{proof}

The first condition in Theorem~\ref{thm:two_condition_thm} can be obtained from the known properties of $\nu_z$ (see Lemma~\ref{lem:mc_properties}, statement (1)). 

As a consequence of this discussion, the task is to show that condition (ii) in Theorem~\ref{thm:two_condition_thm} holds true, to which the rest of the article will be devoted.

%----------------------------------------------------------------------------------------------------
%qm Notations and definitions
%-------------------------------------------------------------------------------------------------
\section{Notations and definitions}
\label{sec:notation}
We start by fixing the notation and giving necessary definitions.
The main argument will follow the general framework proposed by Bourgade, Yau and Yin, therefore we try to keep our notation as close as possible to the notation used in \cite{BourYauYin}.

Throughout the rest of the article $a$ and $b$ will be elements of $\ZZ/n\ZZ$. 

Let $X_a,a\in \ZZ /n \ZZ, $ be independent $N\times N$ matrices, entries of which are independent random variables (real or complex with independent real and imaginary parts) with zero mean, variance $N^{-1}$ and satisfying condition \eqref{eq:unif_decay_cond}. 
Let $X$ be defined by \eqref{eq:linearisation_matrix}.
For  $z\in\CC$ and $w=E+\sqrt{-1}\eta\in\CC_+$ introduce the matrices
\begin{equation*}
  Y_z:=X-z
  ,\quad 
  G(w):=(Y_z^*Y_z-w)^{-1}
  ,\quad
  \Gc(w):=(Y_z Y_z^*-w)^{-1}.
\end{equation*}

We shall consider $nN\times nN$ matrices as consisting of $N\times N$ blocks indexed by $(a,b)$.
We shall use left superscript to specify the submatrix.
For example, for $i,j\in\{1,\ldots,N\}$ 
\begin{equation*}
  \LRI{x}{}{ab}{ij}
  :=
  x_{(\xo{a}-1)N+i,(\xo{b}-1)N+j}
  ,\quad
  \LRI{\Gc}{}{ab}{ij}
  :=
  \Gc_{(\xo{a}-1)N+i,(\xo{b}-1)N+j}.
\end{equation*}
where $\xo{a}\in a\cap\{1,\ldots,n\}$ and $\xo{b}\in b\cap\{1,\ldots,n\}$.

Define also $i(a):=(\xo{a}-1)N+i$ for $1\leq i\leq N$.

We shall use the index $a$ instead of $aa$ for for elements of the diagonal blocks, for example, $\LRI{G}{}{a}{kl}:=G_{k(a),l(a)}$. 

The $i(a)$th rows of matrices $X$ and $Y_z$ will be denoted by $\LRI{x}{}{}{i(a)}$ and $\LRI{y}{}{}{i(a)}$ respectively.
The corresponding columns of these matrices will be denoted by $\LRI{\xb}{}{}{i(a)}$ and $\LRI{\yb}{}{}{i(a)}$.

For $\TT,\UU\subset\{1,\ldots,nN\}$, $Y_z^{(\TT,\UU)}$ will denote a $(nN-|\UU|)\times (nN-|\TT|)$ matrix, obtained from $Y_z$ by deleting the rows with indices in $\UU$ and columns with indices in $\TT$. 
Resolvent matrices, corresponding to these minors, will be denoted by $G^{(\TT,\UU)}$ and $\Gc^{(\TT,\UU)}$, i.e.,
\begin{equation*}
  G^{(\TT,\UU)}(w)
  =
  (Y_z^{(\TT,\UU)*}Y_z^{(\TT,\UU)}-w)^{-1}
  ,\quad
  \Gc^{(\TT,\UU)}(w)
  =
  (Y_z^{(\TT,\UU)}Y_z^{(\TT,\UU)*}-w)^{-1}.
\end{equation*}
Note that we shall keep the indices of the elements of matrices for minors of these matrices. 
More precisely, the entries of $Y_z^{(\TT,\UU)}$ will be indexed by $(\{1,\ldots,nN\}\setminus \UU)\times(\{1,\ldots,nN\}\setminus \TT)$, and the entries of  $G^{(\TT,\UU)}$ and $\Gc^{(\TT,\UU)}$ by $(\{1,\ldots,nN\}\setminus \TT)^2$ and $(\{1,\ldots,nN\}\setminus \UU)^2$ respectively. For $i\in\TT, j\in\UU$ and $k\in\{1,\ldots,nN\}$ we shall define
\begin{equation*}
  G^{(\TT,\UU)}_{ik}
  =G^{(\TT,\UU)}_{ki}
  =0
  ,\quad
  \Gc^{(\TT,\UU)}_{jk}
  =
  \Gc^{(\TT,\UU)}_{kj}
  =0
.
\end{equation*}
Note that all these matrices depend on $z\in\CC$ and $w\in\CC_+$.

For $a\in\ZZ/n\ZZ$ and $\TT,\UU\subset\{1,\ldots,nN\}$, define next
\begin{equation*}
  \LRI{m}{(\TT,\UU)}{a}{G}
  :=
  \frac{1}{N}\sum_{i=1}^{N}\LRI{G}{(\TT,\UU)}{a}{ii}
  ,\quad
  \LRI{m}{(\TT,\UU)}{a}{\Gc}
  :=
  \frac{1}{N}\sum_{i=1}^{N}\LRI{\Gc}{(\TT,\UU)}{a}{ii}
  .
\end{equation*}
If $\TT=\UU=\emptyset$, we shall drop the $(\emptyset,\emptyset)$ superscript and write $\LRI{m}{}{a}{G}$ or $\LRI{m}{}{a}{\Gc}$.
Note, that
\begin{equation*}
  m_G(z,w)
  =
  \frac{1}{n}\sum_{a}\LRI{m}{}{a}{G}(z,w)
  .
\end{equation*}

Now we can introduce
\begin{equation*}
  \Lambda
  :=
  \max_a \{|\LRI{m}{}{a}{G}-m_c|,|\LRI{m}{}{a}{\Gc}-m_c|\}
\end{equation*}
and
\begin{equation*}
  \Psi
  :=
  \frac{1}{\sqrt{N}}+\sqrt{\frac{\Lambda}{N\eta}}+\frac{1}{N\eta},
\end{equation*}
where $m_c$ was defined in Proposition~\ref{thm:sosh_orou_product}.

We shall use $C$ and $c$ to denote different constants, that do not depend on $N$, $w$ or $z$.

For $\zeta>0$ we say that an event $\Xi_N$ holds with $\zeta$-high probability, if 
\begin{equation*}
  \Pr{\Xi^{\complement}}
  \leq
  N^{C}e^{-\varphi^{\zeta}}
  ,
\end{equation*}
where 
\begin{equation}
  \label{eq:varphi}
  \varphi
  :=
  (\log N)^{\log \log N}
\end{equation}
and $C>0$.

For $A,B>0$ we shall write  $A\sim_c B$ or simply $A\sim B$ if there is $c>0$ such that 
\begin{equation}
  \label{eq:equivalence_def}
   c^{-1}A\leq B\leq c A.
\end{equation}

%qm Tools and methods
%====================================================================================

\section{Tools and Methods}
\label{sec:tools}
This section collects some basic and classical tools which will be relevant towards the main result. 
Note that in Lemmas \ref{lem:bord_guion_minor_estimate}, \ref{lem:im_of_resolvent_diag_and_product}, \ref{lem:minor_differences} and \ref{lem:derivative_estimates} we deal with objects introduced in Section~\ref{sec:notation}, while in Lemma~\ref{lem:mc_properties} we study properties of the function $m_c$, which was introduced in Proposition~\ref{thm:sosh_orou_product}.

\subsection{Linear Algebra}
\label{sec:linear_algebra}

\begin{lem}\emph{(Schur complement formula, \cite[Section~0.7.3]{HornJohn})}
  Let $A$ be an invertible matrix and let $B$ be its inverse. Divide the matrices $A$ and $B$ into blocks
  \begin{equation*}
    A=
    \begin{pmatrix}
      A_{11}&A_{12}
      \\
      A_{21}&A_{22}
    \end{pmatrix}
    ,\quad
    B=
    \begin{pmatrix}
      B_{11}&B_{12}
      \\
      B_{21}&B_{22}
    \end{pmatrix},
  \end{equation*}
so that the blocks with the same index have the same size, and the blocks on the diagonal are square submatrices.

If $A_{22}$ is invertible, then
  \begin{equation}\label{eq:schur_complement}
    \left[B_{11}\right]^{-1}
    =
    A_{11}-A_{12}\left[A_{22}\right]^{-1}A_{21}
    .
  \end{equation}
\end{lem}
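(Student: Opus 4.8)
The plan is to argue directly from the identity $AB = I$ written in block form. Partitioning $AB=I$ compatibly with the given decomposition, the first block-row gives the two relations
\begin{equation*}
  A_{11}B_{11} + A_{12}B_{21} = I, \qquad A_{21}B_{11} + A_{22}B_{21} = 0.
\end{equation*}
Since $A_{22}$ is invertible by hypothesis, the second relation yields $B_{21} = -A_{22}^{-1}A_{21}B_{11}$, and substituting this into the first relation gives $\left(A_{11} - A_{12}A_{22}^{-1}A_{21}\right)B_{11} = I$.

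It then remains to deduce that $B_{11}$ is invertible with $[B_{11}]^{-1} = A_{11} - A_{12}A_{22}^{-1}A_{21}$. The identity just obtained exhibits $A_{11} - A_{12}A_{22}^{-1}A_{21}$ as a left inverse of the square matrix $B_{11}$, and a square complex matrix possessing a one-sided inverse is invertible with that matrix as its (two-sided) inverse, which gives the claim. Alternatively, and more transparently, one can start from the block factorization
\begin{equation*}
  A = \begin{pmatrix} I & A_{12}A_{22}^{-1} \\ 0 & I \end{pmatrix} \begin{pmatrix} S & 0 \\ 0 & A_{22} \end{pmatrix} \begin{pmatrix} I & 0 \\ A_{22}^{-1}A_{21} & I \end{pmatrix}, \qquad S := A_{11} - A_{12}A_{22}^{-1}A_{21},
\end{equation*}
which is valid as soon as $A_{22}$ is invertible; taking determinants shows $\det S = \det A / \det A_{22} \neq 0$, so $S$ is invertible, and inverting the three factors (each of which inverts trivially) and reading off the $(1,1)$-block of $A^{-1} = B$ gives $B_{11} = S^{-1}$, i.e.\ $[B_{11}]^{-1} = S$.

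There is no real obstacle here: this is the classical Schur complement identity, and the only point deserving a word of comment is the passage from the one-sided relation $S\,B_{11} = I$ to genuine invertibility of $B_{11}$, which is immediate for square matrices. I would therefore keep the argument to the two displayed computations above together with that remark and not elaborate further.
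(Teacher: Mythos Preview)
Your argument is correct and is essentially the standard derivation of the Schur complement formula. The paper does not give its own proof of this lemma at all: it simply states the result with a citation to \cite[Section~0.7.3]{HornJohn}, so there is nothing to compare against beyond noting that what you wrote is exactly the kind of verification one finds in that reference.
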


\begin{lem}
  Let $A$ be a square matrix and let $w$ be a complex number. If $A^*A-\omega$ is invertible, then
    \begin{equation*}
      A(A^*A-w)^{-1}A^*
      =
      I+w(AA^* -w)^{-1}
      .
    \end{equation*}
\end{lem}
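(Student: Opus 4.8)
The plan is to derive the identity from the elementary intertwining relation between $A^*A$ and $AA^*$. First I would note that the hypothesis that $A^*A-w$ is invertible forces $AA^*-w$ to be invertible as well: if $w=0$ this is because $A$, being square, is invertible together with $A^*A$, hence so is $AA^*$; if $w\neq 0$ it is because $A^*A$ and $AA^*$ have the same nonzero eigenvalues, so $w$ lies in the spectrum of one exactly when it lies in the spectrum of the other. Thus both resolvents appearing in the statement are well defined.

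Next I would observe the purely algebraic identity
\[
  A(A^*A-w)=(AA^*-w)A,
\]
valid because both sides equal $AA^*A-wA$. Multiplying on the left by $(AA^*-w)^{-1}$ and on the right by $(A^*A-w)^{-1}$ yields
\[
  A(A^*A-w)^{-1}=(AA^*-w)^{-1}A.
\]
Finally I would multiply both sides on the right by $A^*$ and rewrite $AA^*$ in the numerator as $(AA^*-w)+w$:
\[
  A(A^*A-w)^{-1}A^*=(AA^*-w)^{-1}AA^*=(AA^*-w)^{-1}\bigl((AA^*-w)+w\bigr)=I+w(AA^*-w)^{-1},
\]
which is exactly the asserted formula.

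I do not expect any genuine obstacle here: the computation is a two-line manipulation of resolvents, and the only point that deserves an explicit remark is the invertibility of $AA^*-w$ under the stated hypothesis, which is handled by the spectral argument above (or, if one prefers, one may simply add it as a standing assumption, since in all applications in this paper $w\in\CC_+$ and both operators are automatically invertible).
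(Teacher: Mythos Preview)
Your argument is correct. The paper's own proof is a one-line citation of the Woodbury matrix identity from Horn--Johnson, whereas you give a direct derivation via the intertwining (push-through) relation $A(A^*A-w)=(AA^*-w)A$. The two are close cousins---the push-through identity is often presented alongside Woodbury---but your version is more self-contained and makes the mechanism transparent without appealing to an external reference. One minor simplification: since $A$ is square, $A^*A$ and $AA^*$ share the same characteristic polynomial, so the invertibility of $AA^*-w$ follows for all $w$ at once and the case split $w=0$ versus $w\neq 0$ is unnecessary.
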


\begin{proof}
  Follows from Woodbury matrix identity (see \cite[Section~0.7.4]{HornJohn}).
  % We use the singular value decomposition. Let $A=UDV$, where $U$ and $V$ are unitary, and $D$ is diagonal with real positive entries. Then
  % \begin{EA}{ll}
  %   A(A^*A-w)^{-1}A^*
  %   &=
  %   UDV(V^*D^2V -w)^{-1}V^*DU^*
  %   =
  %   UD(D^2 -w)^{-1} DU^*
  %   =
  %   U(D^2 -w+w)(D^2 -w)^{-1} U^*
  %   \\
  %   &=
  %   I+w U(D^2-w)^{-1}U^*
  %   =
  %   I+w (UD^2U^*-w)^{-1}
  %   =
  %   I+w (AA^*-w)^{-1}
  % \end{EA}
\end{proof}

\begin{lem}\emph{\cite[proof of Lemma~C.3]{BordGuio}}\label{lem:bord_guion_minor_estimate}
  Let $\TT,\UU,\Kc\subset\{1,\ldots,nN\}$. Then for any $i\notin \TT$
  \begin{equation}\label{eq:bord_guion_minor_estimate}
    |\sum_{k\in
      \Kc} G^{(\TT i,\UU)}_{kk}-G^{(\TT ,\UU)}_{kk}|
    \leq
    \frac{4}{\eta}
    ,\quad
    |\sum_{k\in \Kc} G^{(\UU,\TT i)}_{kk}-G^{(\UU,\TT)}_{kk}|
    \leq
    \frac{4}{\eta}
    .
  \end{equation}
The same is true for $\Gc$.
\end{lem}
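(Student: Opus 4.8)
The plan is to reduce each of the four inequalities to one of two elementary resolvent-perturbation identities, according to how the Hermitian matrices $Y_z^*Y_z$ (for $G$) and $Y_zY_z^*$ (for $\Gc$), and their minors, change when one further row or column of the relevant $Y_z$-minor is deleted. Throughout, fix $w=E+\sqrt{-1}\eta\in\CC_+$; every matrix whose resolvent occurs below is of the form $MM^*$ or $M^*M$, hence Hermitian, and its resolvent $R=(\,\cdot\,-w)^{-1}$ obeys the Ward identity $\Im R=\eta RR^*=\eta R^*R$, so that for any index $i$ in the corresponding index set
\begin{equation*}
  \sum_k|R_{ik}|^2=\sum_k|R_{ki}|^2=(RR^*)_{ii}=\frac{\Im R_{ii}}{\eta}>0,\qquad |R_{ii}|\geq\Im R_{ii}.
\end{equation*}
Deleting a column of a $Y_z$-minor removes a common row and column of $Y_z^*Y_z$ (passing to a principal submatrix) but is a rank-one downdate $-\mathbf{v}\mathbf{v}^*$ of $Y_zY_z^*$, and deleting a row does the opposite; hence $G^{(\TT i,\UU)}$ versus $G^{(\TT,\UU)}$ and $\Gc^{(\UU,\TT i)}$ versus $\Gc^{(\UU,\TT)}$ are of the principal-submatrix type, while $G^{(\UU,\TT i)}$ versus $G^{(\UU,\TT)}$ and $\Gc^{(\TT i,\UU)}$ versus $\Gc^{(\TT,\UU)}$ are rank-one downdates.

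In the principal-submatrix case $R^{(i)}$ is the resolvent of $H$ with row and column $i$ deleted, $R=(H-w)^{-1}$ with $H$ Hermitian; keeping the paper's convention that $R^{(i)}$ is extended by $0$ on the deleted index, the Schur complement formula \eqref{eq:schur_complement} gives $R^{(i)}_{kl}=R_{kl}-R_{ki}R_{il}/R_{ii}$ for all $k,l$ (the cases $k=i$ or $k$ deleted being immediate), so by Cauchy--Schwarz and the Ward identity
\begin{equation*}
  \left|\sum_{k\in\Kc}\left(R^{(i)}_{kk}-R_{kk}\right)\right|
  =\frac{1}{|R_{ii}|}\left|\sum_{k\in\Kc}R_{ki}R_{ik}\right|
  \leq\frac{1}{|R_{ii}|}\left(\sum_k|R_{ki}|^2\right)^{1/2}\left(\sum_k|R_{ik}|^2\right)^{1/2}
  =\frac{\Im R_{ii}}{\eta\,|R_{ii}|}\leq\frac{1}{\eta}.
\end{equation*}

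In the rank-one downdate case $R^{(i)}=(H-\mathbf{v}\mathbf{v}^*-w)^{-1}$, where $\mathbf{v}$ is the deleted row or column of the $Y_z$-minor; if $\mathbf{v}=0$ the two resolvents agree, so assume $\mathbf{v}\neq 0$, whence $\Im(\mathbf{v}^*R\mathbf{v})=\eta\,\mathbf{v}^*RR^*\mathbf{v}>0$, so $1-\mathbf{v}^*R\mathbf{v}\neq0$ and the Sherman--Morrison identity yields $R^{(i)}=R+\dfrac{R\mathbf{v}\mathbf{v}^*R}{1-\mathbf{v}^*R\mathbf{v}}$. Using Cauchy--Schwarz and $|1-\mathbf{v}^*R\mathbf{v}|\geq|\Im(\mathbf{v}^*R\mathbf{v})|=\eta\,\mathbf{v}^*RR^*\mathbf{v}$,
\begin{equation*}
  \left|\sum_{k\in\Kc}\left(R^{(i)}_{kk}-R_{kk}\right)\right|
  =\frac{1}{|1-\mathbf{v}^*R\mathbf{v}|}\left|\sum_{k\in\Kc}(R\mathbf{v})_k(\mathbf{v}^*R)_k\right|
  \leq\frac{\mathbf{v}^*RR^*\mathbf{v}}{|1-\mathbf{v}^*R\mathbf{v}|}\leq\frac{1}{\eta}.
\end{equation*}
In each of the four cases one thereby obtains even the bound $\eta^{-1}$, which the constant $4$ comfortably absorbs, and the $\Gc$-versions are included in the two computations above. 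I do not expect a genuine obstacle: the only care needed is the index bookkeeping — matching each inequality to the correct one of the two cases and verifying the Schur-complement identity on the zero-padded indices — which is routine.
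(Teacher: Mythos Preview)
Your argument is correct: the two resolvent-perturbation identities (principal-submatrix via Schur complement, and rank-one downdate via Sherman--Morrison) together with the Ward identity $\Im R=\eta R^*R=\eta RR^*$ give the bound $1/\eta$ in each of the four cases, which is even sharper than the stated $4/\eta$. The paper does not supply its own proof but merely cites \cite[proof of Lemma~C.3]{BordGuio}; your approach is exactly the standard one underlying that reference, so there is nothing substantive to contrast.
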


\begin{lem}\label{lem:im_of_resolvent_diag_and_product}
For $i\in\{1,\ldots,nN\}$
  \begin{equation}\label{eq:im_of_resolvent_diag_and_product}
    \sum_{k=1}^{nN}|G_{ki}|^2
    =
    \frac{\Im G_{ii}}{\eta}
    .
  \end{equation}
\end{lem}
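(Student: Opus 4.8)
The statement to prove is the identity
\[
  \sum_{k=1}^{nN}|G_{ki}|^2 = \frac{\Im G_{ii}}{\eta}
\]
for $G=G(w)=(Y_z^*Y_z-w)^{-1}$ with $w=E+\sqrt{-1}\eta$. The plan is to exploit that $Y_z^*Y_z$ is Hermitian, so $G$ is the resolvent of a Hermitian matrix, and such resolvents satisfy a general Ward-type identity.

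\textbf{Plan.} Write $H:=Y_z^*Y_z$, which is Hermitian. Then $G=(H-w)^{-1}$ and $G^*=(H-\bar w)^{-1}$. The key algebraic step is to compute $GG^*$. Using the resolvent identity, or simply
\[
  G-G^* = (H-w)^{-1}-(H-\bar w)^{-1} = (H-w)^{-1}\big[(H-\bar w)-(H-w)\big](H-\bar w)^{-1} = (w-\bar w)\,GG^*,
\]
and since $w-\bar w = 2\sqrt{-1}\,\Im w = 2\sqrt{-1}\,\eta$, we get
\[
  GG^* = \frac{G-G^*}{2\sqrt{-1}\,\eta} = \frac{\Im G}{\eta},
\]
where $\Im G := (G-G^*)/(2\sqrt{-1})$ is understood entrywise (this makes sense because for Hermitian $H$ the matrix $\Im G$ equals $\eta\, GG^*$, which is positive semidefinite). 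Taking the $(i,i)$ entry of the matrix identity $GG^*=\Im G/\eta$ gives
\[
  (GG^*)_{ii} = \sum_{k=1}^{nN} G_{ik}\overline{G_{ik}} = \sum_{k=1}^{nN}|G_{ik}|^2 = \frac{\Im G_{ii}}{\eta}.
\]
Finally, since $H$ is Hermitian, $G$ is symmetric? No — $G$ need not be symmetric, but $|G_{ik}|=|G_{ki}|$ is not automatic either. However $(GG^*)_{ii}=\sum_k |G_{ik}|^2$, whereas we want $\sum_k|G_{ki}|^2$; note $\sum_k |G_{ki}|^2 = (G^*G)_{ii}$, and the same computation with the roles reversed, $G-G^*=(w-\bar w)G^*G$ (which also holds since $G$ and $G^*$ commute, both being functions of $H$), gives $G^*G=\Im G/\eta$ as well. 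So either way the $(i,i)$ entry is $\Im G_{ii}/\eta$, and the identity follows.

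\textbf{Main obstacle.} There is essentially no obstacle here; it is a one-line consequence of the resolvent identity for a Hermitian matrix. The only point requiring a small amount of care is to make sure the "imaginary part" is interpreted correctly as the entrywise imaginary part $\Im G_{ii} = (G_{ii}-\overline{G_{ii}})/(2\sqrt{-1})$, which is consistent because $(G-G^*)_{ii} = G_{ii}-\overline{G_{ii}}$. One could alternatively diagonalize $H=U\mathrm{diag}(\lambda_j)U^*$ and write $G_{ki}=\sum_j \overline{U_{kj}}\,U_{ij}/(\lambda_j-w)$ wait, let me be careful: $G_{ki}=\sum_j U_{kj}\overline{U_{ij}}/(\lambda_j-w)$, so $\sum_k |G_{ki}|^2 = \sum_j |U_{ij}|^2/|\lambda_j-w|^2$ by orthonormality of the columns of $U$, while $\Im G_{ii} = \sum_j |U_{ij}|^2 \,\eta/|\lambda_j-w|^2$, and dividing by $\eta$ matches. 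This spectral computation is a clean alternative if one prefers to avoid the operator manipulation. I would present the short resolvent-identity proof.
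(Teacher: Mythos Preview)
Your proof is correct. The paper's own argument is precisely your spectral alternative: it writes $G=U^*DU$ with $D=\mathrm{diag}((\lambda_j-w)^{-1})$, observes $\sum_k|G_{ki}|^2=(G^*G)_{ii}=(U^*D^*DU)_{ii}$, and uses $\eta^{-1}\Im(\lambda_j-w)^{-1}=|\lambda_j-w|^{-2}$. Your primary route via the resolvent identity $G-G^*=(w-\bar w)G^*G$ (valid since $G,G^*$ commute as functions of the Hermitian $H=Y_z^*Y_z$) is an equivalent and slightly slicker packaging of the same fact; it avoids introducing the eigenbasis explicitly but is otherwise the same one-line computation.
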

\begin{proof}
Let $\lambda_1,\ldots,\lambda_{nN}$ be eigenvalues of the matrix $Y_z^* Y_z$. Then $G=U^* D U$, where $U$ is unitary and $D=\mathrm{diag}((\lambda_1-w)^{-1},\ldots,(\lambda_{nN}-w)^{-1})$. Note that
  \begin{equation*}
    \sum_{k=1}^{nN}|G_{ki}|^2
    =
    \left[G^* G\right]_{ii}
    =
    \left[U^*D^* D U\right]_{ii}
    .
  \end{equation*}
The lemma now follows from the relation
\begin{equation*}
  \frac{1}{\eta}\Im \frac{1}{\lambda_i-w}
  =
  \frac{1}{|\lambda_i-w|^2}
  ,\quad
  i\in\{1,\ldots,nN\}
  .
\end{equation*}
\end{proof}

\begin{lem}\emph{(\cite[Lemma~6.3]{BourYauYin})}\label{lem:minor_differences}
  Let $\TT,\UU\subset\llbracket 1,nN\rrbracket$. If $i,j\notin \TT\cup \{k\}$ then
  \begin{equation}
    \label{eq:minor_differences_1}
    G_{ij}^{(\TT,\UU)}-G_{ij}^{(\TT k,\UU)}
    =
    \frac{G_{ik}^{(\TT,\UU)} G_{kj}^{(\TT,\UU)}}{G_{kk}^{(\TT,\UU)}}
    ,\quad
    \Gc_{ij}^{(\UU,\TT)}-\Gc_{ij}^{(\UU ,\TT k)}
    =
    \frac{\Gc_{ik}^{(\UU,\TT)} \Gc_{kj}^{(\UU,\TT)}}{\Gc_{kk}^{(\UU,\TT)}}
    ,
  \end{equation}
  \begin{equation}
    \label{eq:minor_differences_2}
    G^{(\TT,\UU)}-G^{(\TT,\UU k)}
    =
    -\frac{\left(G^{(\TT,\UU k)}y_{k}^* \right) \left(y_k G^{(\TT,\UU k)}\right)}{1+y_k G^{(\TT,\UU k)} y_k^*}
    ,\quad
    \Gc^{(\TT,\UU)}-\Gc^{(\TT k,\UU)}
    =
    -\frac{\left(\Gc^{(\TT k,\UU )} \yb_{k} \right) \left(\yb_k^* \Gc^{(\TT k,\UU)} \right)}{1+\yb_k^* \Gc^{(\TT k,\UU)} \yb_k}
    .
  \end{equation}
\end{lem}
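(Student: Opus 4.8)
The plan is to deduce all four identities from two classical facts --- the Schur complement formula \eqref{eq:schur_complement}, applied not to $Y_z^*Y_z-w$ itself but to its inverse $G$, and the Sherman--Morrison rank-one update formula --- together with the elementary observation that deleting a \emph{column} of $Y_z$ amounts to deleting a matching row and column of the Gram matrix $Y_z^*Y_z$ (a principal-submatrix operation), whereas deleting a \emph{row} of $Y_z$ amounts to adding a rank-one term to $Y_z^*Y_z$; for $\Gc=Y_zY_z^*$ the roles of rows and columns are interchanged. Since $w$ appears only on the diagonal, it commutes with passage to principal submatrices, so $Y_z^{(\TT k,\UU)*}Y_z^{(\TT k,\UU)}-w$ is exactly the principal submatrix of $Y_z^{(\TT,\UU)*}Y_z^{(\TT,\UU)}-w$ obtained by deleting row and column $k$; likewise for $\Gc$.

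For the left identity in \eqref{eq:minor_differences_1} I would set $R:=G^{(\TT,\UU)}$ and $M:=R^{-1}=Y_z^{(\TT,\UU)*}Y_z^{(\TT,\UU)}-w$, and partition both into $2\times 2$ blocks with the second block the single index $k$ and the first block the remaining indices outside $\TT$. The $(1,1)$-block of $M$ is then $Y_z^{(\TT k,\UU)*}Y_z^{(\TT k,\UU)}-w$, whose inverse is $G^{(\TT k,\UU)}$. Applying \eqref{eq:schur_complement} with $A\mapsto R$, $B\mapsto M$ --- valid since $R_{22}=G^{(\TT,\UU)}_{kk}$ is a nonzero scalar, its imaginary part being positive as a diagonal resolvent entry of a Hermitian matrix at $w\in\CC_+$, cf.\ Lemma~\ref{lem:im_of_resolvent_diag_and_product} --- gives, for $i,j\notin\TT\cup\{k\}$,
\begin{equation*}
  G^{(\TT k,\UU)}_{ij}
  =
  G^{(\TT,\UU)}_{ij}-G^{(\TT,\UU)}_{ik}\bigl(G^{(\TT,\UU)}_{kk}\bigr)^{-1}G^{(\TT,\UU)}_{kj},
\end{equation*}
which is the claim after rearranging. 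The right identity in \eqref{eq:minor_differences_1} is identical, applied to $\Gc^{(\UU,\TT)}$, using that deleting row $k$ of $Y_z$ deletes row and column $k$ of $Y_z^{(\UU,\TT)}Y_z^{(\UU,\TT)*}$.

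For \eqref{eq:minor_differences_2} I would instead use the rank-one decomposition $Y_z^{(\TT,\UU)*}Y_z^{(\TT,\UU)}=Y_z^{(\TT,\UU k)*}Y_z^{(\TT,\UU k)}+y_k^*y_k$, with $y_k$ the $k$-th row of $Y_z$ (coordinates in $\TT$ being immaterial under the convention $G^{(\TT,\UU)}_{ik}=0$ for $i\in\TT$); subtracting $w$ and inverting, $G^{(\TT,\UU)}=\bigl((G^{(\TT,\UU k)})^{-1}+y_k^*y_k\bigr)^{-1}$, and Sherman--Morrison yields the first identity in \eqref{eq:minor_differences_2} directly. The second one follows the same way from $Y_z^{(\TT,\UU)}Y_z^{(\TT,\UU)*}=Y_z^{(\TT k,\UU)}Y_z^{(\TT k,\UU)*}+\yb_k\yb_k^*$, with $\yb_k$ the $k$-th column of $Y_z$.

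I do not expect a genuine obstacle here: the whole thing is a chain of standard resolvent manipulations (the statement is quoted from \cite[Lemma~6.3]{BourYauYin}). The only points needing care are the minor bookkeeping --- confirming that under the paper's conventions the vectors $y_k,\yb_k$ on the right-hand sides may be taken to be the full rows resp.\ columns of $Y_z$, and that the matrices whose inverses appear are indeed the claimed minor resolvents --- and the non-vanishing of the denominators: $G^{(\TT,\UU)}_{kk}$ and $\Gc^{(\UU,\TT)}_{kk}$ as above, while $1+y_kG^{(\TT,\UU k)}y_k^*$ and $1+\yb_k^*\Gc^{(\TT k,\UU)}\yb_k$ are nonzero because $G^{(\TT,\UU)}$ resp.\ $\Gc^{(\TT,\UU)}$ exist (equivalently, because each such expression has nonnegative imaginary part, vanishing only when the corresponding vector is $0$, in which case the expression equals $1$), using $\eta=\Im w>0$.
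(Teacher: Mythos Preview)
Your proposal is correct and is precisely the standard argument behind this result. Note, however, that the paper does not actually give its own proof of this lemma: it is stated with the attribution \cite[Lemma~6.3]{BourYauYin} and no proof is supplied, so there is nothing to compare against beyond observing that your Schur-complement/Sherman--Morrison derivation is exactly the one underlying the cited reference.
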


\begin{lem}\label{lem:derivative_estimates}
Let $w=E+\sqrt{-1}\eta\in\CC_+$. 
Then for any $i\in\llbracket 1,nN\rrbracket$
\begin{EA}{c}
  \left|\frac{\partial G_{ii}}{\partial E}\right|
  +
  \left|\frac{\partial G_{ii}}{\partial \eta}\right|
  +
  \left|\frac{\partial \Gc_{ii}}{\partial E}\right|
  +
  \left|\frac{\partial \Gc_{ii}}{\partial \eta}\right|
  =
  \OO{\frac{1}{\eta^2}}.
\end{EA}
\end{lem}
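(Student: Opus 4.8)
The plan is to prove the bound by writing the diagonal resolvent entries in terms of the eigendecomposition of the Hermitized matrices and differentiating directly, reducing everything to a single elementary estimate on the resolvent of a normal matrix. First I would write $Y_z^*Y_z = U^*\Lambda U$ with $U$ unitary and $\Lambda = \mathrm{diag}(\lambda_1,\ldots,\lambda_{nN})$, as in the proof of Lemma~\ref{lem:im_of_resolvent_diag_and_product}, so that
\begin{equation*}
  G_{ii}(w)
  =
  \sum_{k=1}^{nN}\frac{|U_{ki}|^2}{\lambda_k-w}
  .
\end{equation*}
Since $\sum_k |U_{ki}|^2 = 1$, this exhibits $G_{ii}$ as a convex combination of the functions $w\mapsto (\lambda_k-w)^{-1}$. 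The same holds for $\Gc_{ii}$ with the eigendecomposition of $Y_z Y_z^*$.

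Next I would differentiate. With $w = E + \sqrt{-1}\eta$ we have $\partial_E (\lambda_k-w)^{-1} = (\lambda_k-w)^{-2}$ and $\partial_\eta (\lambda_k-w)^{-1} = \sqrt{-1}(\lambda_k-w)^{-2}$, so
\begin{equation*}
  \left|\frac{\partial G_{ii}}{\partial E}\right|
  +
  \left|\frac{\partial G_{ii}}{\partial \eta}\right|
  \leq
  2\sum_{k=1}^{nN}\frac{|U_{ki}|^2}{|\lambda_k-w|^2}
  \leq
  \frac{2}{\eta^2}\sum_{k=1}^{nN}|U_{ki}|^2
  =
  \frac{2}{\eta^2}
  ,
\end{equation*}
where the middle inequality uses $|\lambda_k - w| \geq |\Im w| = \eta$ (the $\lambda_k$ being real and nonnegative, but only their reality matters here). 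The identical computation applies to $\Gc_{ii}$, and summing the four terms gives the claimed $\OO{\eta^{-2}}$ bound with an explicit constant $4$. Alternatively, and perhaps more in the spirit of resolvent calculus, one can use $\partial_w G = G^2$ (from $\partial_w (A-w)^{-1} = (A-w)^{-2}$) together with $\|G\|_{\mathrm{op}} \leq \eta^{-1}$ to get $|(G^2)_{ii}| \leq \|G^2\|_{\mathrm{op}} \leq \|G\|_{\mathrm{op}}^2 \leq \eta^{-2}$; the chain rule in $E$ and $\eta$ then introduces only factors of $1$ and $\sqrt{-1}$.

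There is no real obstacle here: the statement is a soft consequence of the spectral theorem and the trivial norm bound on the resolvent, and the only thing to be slightly careful about is that the bound should be uniform in $i$, which it manifestly is since $\sum_k |U_{ki}|^2 = 1$ for every $i$, and uniform in $z$, which holds because the argument never uses anything about $z$ beyond the definition $Y_z = X - z$. I would present the eigendecomposition version since it is self-contained and reuses machinery already set up in the proof of Lemma~\ref{lem:im_of_resolvent_diag_and_product}.
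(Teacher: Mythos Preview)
Your proof is correct and essentially identical to the paper's: both write $G_{ii}$ via the spectral decomposition of $Y_z^*Y_z$ as a convex combination $\sum_j |u_{ij}|^2 (\lambda_j - w)^{-1}$, differentiate termwise, and use $|\lambda_j - w| \geq \eta$. The alternative operator-norm argument you sketch is also fine but the eigendecomposition version is exactly what the paper does.
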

\begin{proof}
  With the notation used in the proof of Lemma~\ref{lem:im_of_resolvent_diag_and_product}, for any $i\in\llbracket 1,nN\rrbracket$
  \begin{equation*}
    G_{ii}
    =
    \sum_{j=1}^{nN} |u_{ij}|^2 \frac{1}{\lambda_{j}-E-i\eta}
    ,
  \end{equation*}
where $u_{ij}$ are the entries or the unitary matrix $U$.
Therefore, the bound for $G_{ii}$ follows from the fact that
\begin{equation*}
  \left|\frac{\partial}{\partial \eta}\left(\frac{1}{\lambda_{j}-E-\sqrt{-1}\eta}\right)\right|+\left|\frac{\partial}{\partial E}\left(\frac{1}{\lambda_{j}-E-\sqrt{-1}\eta}\right)\right|
  =
  \OO{\frac{1}{\eta^2}}
  .
\end{equation*}
The proof for $\Gc_{ii}$ is similar.
\end{proof}

\subsection{Properties of $m_c$}

\begin{lem}\emph{(\cite[Lemma~4.2]{BourYauYin})}\label{lem:mc_properties}
Let $m_c(z,w)$ be the solution of the equation
  \begin{equation}\label{eq:mc_equation}
    m_c^{-1}=-w(1+m_c)+|z|^2(1+m_c)^{-1}
  \end{equation}
  that satisfies $\Im m_c(z,w)>0$ if $\Im w>0$.

There exists an absolutely continuous probability measure $\nu_z$ with density $\rho_z$ such that $m_c(z,w)$ is the Stieltjes transform of measure $\nu_z$, i.e.
    \begin{equation*}
      m_c(z,w)
      =
      \int_{\RR}\frac{\rho_z(x)}{\lambda-w}dx
      .
    \end{equation*}
Let $|z|\geq 1+\delta$ for some $\delta>0$.
Then the following holds.

\begin{itemize}
\item[(1)] 
Let
  \begin{equation*}
    \afr:=\sqrt{1+8|z|^2}
    ,\quad 
    \lambda_{\pm}=\frac{(\afr\pm 3)^3}{8(\afr\pm 1)}.
  \end{equation*}
  Then the support of $\rho_z(x)$ is the interval $(\lambda_-,\lambda_+)$.
\item[(2)]
There exists $\tau_0>0$ such that if $|w-\lambda_-|\geq \tau_0$ and $E\leq \lambda_-$, then
\begin{equation}\label{eq:mc_equivalences}
  |\Re m_c|\sim 1
  ,\quad
  \Re m_c\geq 0
  ,\quad
  \Im m_c \sim \eta.
\end{equation}
\end{itemize}

\end{lem}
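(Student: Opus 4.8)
This is \cite[Lemma~4.2]{BourYauYin}; I sketch how I would prove it directly. The plan is first to put \eqref{eq:mc_equation} into tractable algebraic form: writing $f:=1+m_c$ and clearing denominators turns it into the cubic $wf^{3}-wf^{2}+(1-|z|^{2})f+|z|^{2}=0$, and, equivalently, it lets one solve for $w$ as a rational function of $m_c$,
\begin{equation*}
  w \;=\; w(m_c)\;:=\;\frac{|z|^{2}}{(1+m_c)^{2}}-\frac{1}{m_c(1+m_c)}.
\end{equation*}
A routine contraction/continuity argument for self-consistent equations of this type (uniqueness of a root with $\Im m_c>0$ when $\Im w$ is large, followed by analytic continuation) shows that for every $w\in\CC_+$ there is a unique root $m_c(z,w)$ with $\Im m_c(z,w)>0$, that it is holomorphic in $w$ on $\CC_+$, and that $m_c(z,w)=-w^{-1}+O(w^{-2})$ as $w\to\infty$. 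By the Nevanlinna representation theorem these three facts force $m_c(z,\cdot)$ to be the Stieltjes transform of a probability measure $\nu_z$ on $\RR$; since the algebraic function cut out by the cubic has no poles at finite $w$, the boundary values $m_c(z,x+\sqrt{-1}0)$ exist for all $x\in\RR$, so $\nu_z$ has no singular part and is purely absolutely continuous with density $\rho_z(x)=\pi^{-1}\Im m_c(z,x+\sqrt{-1}0)$. (Alternatively one could take $\nu_z$ from Proposition~\ref{thm:sosh_orou_product} and identify its Stieltjes transform with the distinguished root of \eqref{eq:mc_equation}, as in \cite{OrouSosh}.)

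For statement (1) I would use that on $\RR\setminus\mathrm{supp}\,\nu_z$ the map $w\mapsto m_c(z,w)$ is real and strictly increasing, with derivative $\int(x-w)^{-2}\,d\nu_z(x)>0$; hence on that set its inverse is a branch of $w(\cdot)$, and the endpoints of $\mathrm{supp}\,\nu_z$ must lie among the real critical points of $w(\cdot)$. Differentiating and clearing denominators, $w'(m)=0$ reduces to $2(|z|^{2}-1)m^{2}-3m-1=0$, whose roots — using $\afr^{2}-1=8|z|^{2}$, hence $4(|z|^{2}-1)=\tfrac{1}{2}(\afr-3)(\afr+3)$ — are $m_{+}=2/(\afr-3)$ and $m_{-}=-2/(\afr+3)$; since $|z|>1$ forces $\afr>3$, both are real with $m_{+}>0$ and $m_{-}\in(-1,0)$. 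Substituting back into $w(\cdot)$ and simplifying (using $|z|^{2}=(\afr-1)(\afr+1)/8$) gives $w(m_{+})=(\afr-3)^{3}/\bigl(8(\afr-1)\bigr)=\lambda_{-}$ and $w(m_{-})=(\afr+3)^{3}/\bigl(8(\afr+1)\bigr)=\lambda_{+}$. A short monotonicity analysis of $m\mapsto w(m)$ on $(0,\infty)$ and on $(-1,0)$ — the intervals on which the physical branch of $m_c$ must live, because $m_c>0$ to the left of the spectrum, $m_c\in(-1,0)$ to its right, and $m_c\to 0$ as $w\to\pm\infty$ — shows that $w$ increases to $\lambda_{-}$ and then decreases on $(0,\infty)$ and decreases to $\lambda_{+}$ and then increases on $(-1,0)$; hence $\RR\setminus\mathrm{supp}\,\nu_z=(-\infty,\lambda_{-})\cup(\lambda_{+},\infty)$, the branch points at $\lambda_{\pm}$ are of square-root type, and $\mathrm{supp}\,\rho_z=(\lambda_{-},\lambda_{+})$. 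In particular $\lambda_{-}>0$, which is exactly what condition (i) of Theorem~\ref{thm:two_condition_thm} requires.

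For statement (2) I would fix $\tau_0>0$ and take $w=E+\sqrt{-1}\eta$ with $E\le\lambda_-$, $|w-\lambda_-|\ge\tau_0$, and (as in the regime where \eqref{eq:mc_equivalences} is applied) $\eta$ small, so that $\mathrm{dist}(E,[\lambda_-,\lambda_+])\gtrsim\tau_0$ and $E$ stays in a fixed bounded interval. On the real axis, $m_c(z,E)=\int(x-E)^{-1}\,d\nu_z(x)$ is real and $\ge 0$, and, because $\nu_z$ is a probability measure on $[\lambda_-,\lambda_+]$ at distance $\gtrsim\tau_0$ from $E$, one gets $m_c(z,E)\sim 1$ and $\partial_E m_c(z,E)=\int(x-E)^{-2}\,d\nu_z(x)\sim 1$. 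Since we are away from the edge, $m_c(z,\cdot)$ is analytic with bounded derivatives near $w=E$, so $m_c(z,w)=m_c(z,E)+\sqrt{-1}\,\eta\,\partial_E m_c(z,E)+O(\eta^{2})$; this gives $\Re m_c=m_c(z,E)+O(\eta^{2})\sim 1$ with $\Re m_c\ge 0$, and $\Im m_c=\eta\,\partial_E m_c(z,E)+O(\eta^{2})\sim\eta$. All implied constants can be taken uniform over $|z|$ in a fixed range with $|z|\ge 1+\delta$, since then $\afr$, $\lambda_{\pm}$ and the relevant branch stay in a compact set bounded away from the degenerate value $\afr=3$.

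The step I expect to require the most care is the global branch bookkeeping in statement (1): one must check that the Stieltjes-transform root of the cubic is precisely the branch living on the prescribed parts of the $m$-line, that the two real critical values $\lambda_{\pm}$ are genuinely the spectral edges (and not interior points or critical values of a spurious sheet), and that the singularity at each $\lambda_{\pm}$ is exactly of square-root type, so that $\mathrm{supp}\,\rho_z$ is exactly $[\lambda_-,\lambda_+]$ with $\rho_z>0$ in the interior. Once that structural picture is pinned down, the explicit evaluation of $\lambda_{\pm}$ and the perturbative bounds in (2) are routine.
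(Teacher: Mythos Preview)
The paper does not prove this lemma at all; it is simply quoted from \cite[Lemma~4.2]{BourYauYin} and used as a black box. Your sketch therefore goes well beyond what the paper does, and it is correct in its essentials: the inversion $w=w(m_c)$, the quadratic $2(|z|^2-1)m^2-3m-1=0$ for the critical points, the evaluation $w(m_{\pm})=\lambda_{\mp}$ via $\afr^2=1+8|z|^2$, and the Nevanlinna argument for the Stieltjes representation are all right, and you correctly identify the branch bookkeeping as the only genuinely delicate step.

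One small point on (2): your Taylor expansion $m_c(z,w)=m_c(z,E)+\sqrt{-1}\eta\,\partial_E m_c(z,E)+O(\eta^2)$ handles the small-$\eta$ regime, but the lemma as stated (and as used in the paper, where $\eta$ ranges up to a fixed constant of order~$1$) also covers $\eta\sim 1$. There the expansion is unnecessary: the integral representation gives $\Im m_c=\int \eta\,((x-E)^2+\eta^2)^{-1}\,d\nu_z(x)$ and $\Re m_c=\int (x-E)\,((x-E)^2+\eta^2)^{-1}\,d\nu_z(x)$ directly, and since $x\in[\lambda_-,\lambda_+]$, $E\le\lambda_-$, and $w$ stays in a compact set bounded away from $\lambda_-$, both integrands are $\sim 1$ (respectively $\sim\eta$ times something $\sim 1$) uniformly. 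This is a one-line addition, not a gap.
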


\subsection{Large deviation estimates}
\label{sec:lde}

\begin{lem}\emph{(\cite[Lemma~10.2]{BourYauYin})}\label{lem:lde_byy}
  Let $a_i,1\leq i\leq N$, be independent complex random variables with mean zero, variance $\sigma^2$ and having a uniform subexponential decay
  \begin{equation*}
    \Pr{|a_i|\geq x\sigma}
    \leq
    \theta^{-1}e^{-x^{\theta}}
    ,\quad
    \forall x\geq 1
    ,
  \end{equation*}
with some $\theta>0$. Let $A_i,B_{ij}\in\CC, 1\leq i,j\leq N$. 
Then there exists a constant $0<\phi<1$, depending only on $\theta$, such that for any $\xi>1$ we have
\begin{EA}{c}\EAy\label{eq:lde_thm_1}
  \Pr{\left|\sum_{i=1}^{N}a_iA_i\right|\geq (\log N)^{\xi}\sigma \left(\sum_i |A_i|^2 \right)^{1/2}}
  \leq
  e^{-(\log N)^{\phi \xi}}
  ,\\\EAy\label{eq:lde_thm_2}
  \Pr{\left|\sum_{i=1}^{N} \xo{a}_i B_{ii} a_i -\sum_{i=1}^{N} \sigma^2 B_{ii}\right|\geq (\log N)^{\xi}\sigma^2 \left(\sum_{i=1}^N |B_{ii}|^2 \right)^{1/2}}
  \leq
  e^{-(\log N)^{\phi \xi}}
  ,\\\EAy\label{eq:lde_thm_3}
  \Pr{\left|\sum_{i\neq j} \xo{a}_i B_{ij} a_j \right|\geq (\log N)^{\xi}\sigma^2 \left(\sum_{i\neq j} |B_{ij}|^2 \right)^{1/2}}
  \leq
  e^{-(\log N)^{\phi \xi}}
  ,
\end{EA}
for any sufficiently large $N>N_0$, where $N_0=N_0(\theta)$ depends on $\theta$.
\end{lem}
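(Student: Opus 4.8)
The plan is to prove the three estimates by the high-moment method combined with Markov's inequality, which is the standard argument behind large-deviation bounds of this type (as in the work of Erd\H{o}s, Yau and Yin underlying \cite{BourYauYin}). By scaling we may assume $\sigma=1$, and by splitting each $a_i$ into its real and imaginary parts it is enough to treat real random variables, up to harmless absolute constants. The only distributional input used is the moment growth
\[
  \E|a_i|^{k}\le(C_\theta k)^{k/\theta},\qquad k\ge 1,
\]
which follows from \eqref{eq:unif_decay_cond} by integrating the tail. In what follows $p$ is a positive integer which at the very end is taken of order $(\log N)^{\xi/\kappa(\theta)}$ for a suitable $\kappa(\theta)$; since $\xi>1$, the resulting threshold $N_0$ then depends only on $\theta$, as the statement requires.

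For the linear form \eqref{eq:lde_thm_1}, normalize $\sum_i|A_i|^{2}=1$. Expanding $\E\bigl|\sum_i a_iA_i\bigr|^{2p}$ and using that in a nonzero term every index occurs at least twice, Rosenthal's inequality (or a direct combinatorial count, together with $\sum_i|A_i|^{m}\le1$ for $m\ge 2$ and the moment bound above) gives $\E\bigl|\sum_i a_iA_i\bigr|^{2p}\le(C_\theta p)^{c_1(\theta)p}$ for an explicit $c_1(\theta)>1$. Markov's inequality with $t=(\log N)^{\xi}$ and $p$ of order $(\log N)^{\xi/c_1(\theta)}$ then produces a bound $e^{-(\log N)^{\phi\xi}}$ as soon as $\phi\le 1/c_1(\theta)$ and $N$ is large (depending only on $\theta$). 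The diagonal quadratic form \eqref{eq:lde_thm_2} has the same structure, since $\sum_i(|a_i|^{2}-\sigma^{2})B_{ii}$ is again a sum of independent centered variables; the same computation, now with $\E\bigl(|a_i|^{2}+\sigma^{2}\bigr)^{2p}\le(C_\theta p)^{(4/\theta)p}$, yields $\E\bigl|\sum_i(|a_i|^{2}-\sigma^{2})B_{ii}\bigr|^{2p}\le(C_\theta p)^{c_2(\theta)p}\bigl(\sum_i|B_{ii}|^{2}\bigr)^{p}$ and hence \eqref{eq:lde_thm_2} with $\phi\le1/c_2(\theta)$.

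The off-diagonal bilinear form \eqref{eq:lde_thm_3} is the only genuinely non-trivial case, the summands $\bar a_iB_{ij}a_j$ no longer being independent; assume $B_{ii}=0$ and, after normalization, $\sum_{i\ne j}|B_{ij}|^{2}=\|B\|_{\mathrm{HS}}^{2}=1$. The plan is to combine decoupling with a recursion. Decoupling gives $\E\bigl|\sum_{i\ne j}\bar a_iB_{ij}a_j\bigr|^{2p}\le C^{2p}\,\E\bigl|\sum_{i\ne j}\bar a_iB_{ij}a_j'\bigr|^{2p}$ for an independent copy $(a_j')$; conditioning on $(a_i)$, the right-hand side is the $2p$-th moment of the linear form $\sum_j c_ja_j'$ with $c_j=\sum_i\bar a_iB_{ij}$, so the previous estimate applies conditionally and reduces the problem to bounding $\E_a\bigl(\sum_j|c_j|^{2}\bigr)^{p}$. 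Now $\sum_j|c_j|^{2}=\sum_{i,i'}\bar a_ia_{i'}M_{ii'}$ with $M=BB^{*}$, a positive semidefinite matrix with $\tr M=\|B\|_{\mathrm{HS}}^{2}=1$ and, since $\|\cdot\|_{\mathrm{op}}\le\|\cdot\|_{\mathrm{HS}}$, $\|M\|_{\mathrm{HS}}\le1$. Writing this quadratic form as $\tr M$, plus its centered diagonal part (a sum of independent variables, controlled by the \eqref{eq:lde_thm_2}-estimate because $\sum_iM_{ii}^{2}\le\|M\|_{\mathrm{op}}\tr M\le1$), plus its off-diagonal part, the off-diagonal part is a bilinear form of exactly the same type, whose matrix $M-\operatorname{diag}(M)$ still has Hilbert--Schmidt norm at most $1$, but now raised to the power $p$ rather than $2p$. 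Iterating, the exponent halves at each step while the Hilbert--Schmidt norm of the matrix stays $\le1$; after $O(\log\log N)$ steps the exponent is $O(1)$ and the remaining low moment is bounded by an absolute constant. Multiplying the (uniformly bounded) constants over the $O(\log\log N)$ levels gives $\E\bigl|\sum_{i\ne j}\bar a_iB_{ij}a_j\bigr|^{2p}\le(C_\theta p)^{c_3(\theta)p}$, and the final Markov step with $p$ of order $(\log N)^{\xi/c_3(\theta)}$ yields \eqref{eq:lde_thm_3} with $\phi\le1/c_3(\theta)$.

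Taking $\phi=\min\{1/c_1(\theta),1/c_2(\theta),1/c_3(\theta)\}\in(0,1)$, depending only on $\theta$, and $N_0(\theta)$ accordingly, all three bounds hold. The step I expect to be the real obstacle is the off-diagonal form \eqref{eq:lde_thm_3}: the lack of any control on $\|B\|_{\mathrm{op}}$ beyond $\|B\|_{\mathrm{op}}\le\|B\|_{\mathrm{HS}}$ is exactly what forces the recursive structure (alternatively one may invoke a Hanson--Wright inequality valid for random variables with subexponential tails, again using only $\|B\|_{\mathrm{op}}\le\|B\|_{\mathrm{HS}}$, but this is the same difficulty in another guise), and one must check that the number of recursion steps stays logarithmic so that the accumulated constants do not spoil the final estimate. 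Everything else is routine once the moment growth $\E|a_i|^{k}\le(C_\theta k)^{k/\theta}$ is in hand.
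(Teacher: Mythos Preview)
The paper does not give its own proof of this lemma: it is stated as a quotation of \cite[Lemma~10.2]{BourYauYin} and used as a black box, with no proof environment following it. So there is nothing in the paper to compare your argument against.

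That said, your sketch is the standard high-moment route that underlies the original result. The linear and diagonal cases are routine Rosenthal-type bounds, and for the off-diagonal bilinear form the decouple-then-recurse scheme you outline is one of the known ways to avoid direct combinatorics on the $2p$-th moment expansion. Your accounting is right: halving the exponent at each step costs a factor $(C_\theta p_k)^{c\,p_k}$ with $p_k=p/2^k$, and since $\sum_k p_k\le 2p$ these multiply to at most $(C_\theta p)^{c'p}$, so the $O(\log\log N)$ levels do not blow up the constant. One small point to keep honest in a full write-up: the conditional linear bound needs an even exponent, so one should either take $p$ a power of two or pad with H\"older at the last step; and the splitting $(A+B+C)^p\le 3^p(|A|^p+|B|^p+|C|^p)$ used to separate trace, centered diagonal, and off-diagonal contributions introduces an extra $3^p$ per level, which is harmless for the same geometric-sum reason. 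None of this affects the conclusion.
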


\subsection{McDiarmid's Concentration Inequality}

\begin{pr}\emph{(\cite{Mcdi})}
Let $U=(u_1,\ldots,u_N)$ be a family of independent random variables taking values in the set $A$.
Suppose that the real-valued function $f:A^N\rightarrow \RR$ satisfies
\begin{equation}\label{eq:mcdiarmid_condition}
  |f(u)-f(u')|\leq c_k
\end{equation}
if the vectors $u$ and $u'$ differs only in $k$th coordinate.
Then for any $t\geq 0$
\begin{equation*}
  \Pr{|f(U)-\Me{f(U)}|\geq t}
  \leq
  2 e^{-2t^2/\sum c_k^2}
  .
\end{equation*}
\end{pr}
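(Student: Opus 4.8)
\emph{Proof proposal.} The plan is to prove this by the classical Doob martingale (method of bounded differences) argument. First I would introduce the filtration $\Fc_k:=\sigma(u_1,\ldots,u_k)$ for $0\leq k\leq N$ and the associated Doob martingale
\begin{equation*}
  M_k:=\Me{f(U)\mid \Fc_k},\qquad 0\leq k\leq N,
\end{equation*}
so that $M_0=\Me{f(U)}$, $M_N=f(U)$, and the increments $D_k:=M_k-M_{k-1}$ satisfy $\Me{D_k\mid\Fc_{k-1}}=0$.

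The key step is to show that, conditionally on $\Fc_{k-1}$, the increment $D_k$ takes values in an interval of length at most $c_k$. Using the independence of $u_1,\ldots,u_N$, one can write $M_k=g_k(u_1,\ldots,u_k)$ where $g_k$ is obtained by integrating $f$ against the (fixed) law of $(u_{k+1},\ldots,u_N)$; then for any $x,y\in A$ the pointwise bound \eqref{eq:mcdiarmid_condition} gives $|g_k(u_1,\ldots,u_{k-1},x)-g_k(u_1,\ldots,u_{k-1},y)|\leq c_k$ after integrating under the expectation sign. Hence there is an $\Fc_{k-1}$-measurable random variable $A_k$ with $D_k\in[A_k,A_k+c_k]$ almost surely, while $\Me{D_k\mid\Fc_{k-1}}=0$.

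Next I would invoke the conditional form of Hoeffding's lemma: if $Y$ is a random variable with $\Me{Y\mid\Fc_{k-1}}=0$ taking values in an interval of length $c_k$, then $\Me{e^{\lambda Y}\mid\Fc_{k-1}}\leq e^{\lambda^2 c_k^2/8}$ for every $\lambda\in\RR$; this follows from convexity of $x\mapsto e^{\lambda x}$ together with a second-order Taylor estimate of the resulting conditional log-moment-generating function. Applying this with $Y=D_k$ and iterating the tower property over $k=N,N-1,\ldots,1$ yields
\begin{equation*}
  \Me{e^{\lambda(f(U)-\Me{f(U)})}}\leq e^{\lambda^2\sum_{k}c_k^2/8}.
\end{equation*}
A Chernoff bound $\Pr{f(U)-\Me{f(U)}\geq t}\leq e^{-\lambda t}\,\Me{e^{\lambda(f(U)-\Me{f(U)})}}$, optimised at $\lambda=4t/\sum_k c_k^2$, gives $\Pr{f(U)-\Me{f(U)}\geq t}\leq e^{-2t^2/\sum_k c_k^2}$; applying the same estimate to $-f$ and taking a union bound produces the two-sided inequality with the factor $2$.

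The main obstacle is the conditional-range bound in the second step: one must argue carefully that it is exactly the independence of the coordinates that permits replacing the abstract conditional expectation $\Me{f(U)\mid\Fc_k}$ by an integral against the fixed law of $(u_{k+1},\ldots,u_N)$, after which \eqref{eq:mcdiarmid_condition} can be applied under the integral sign; without independence the martingale increments need not be bounded and the conclusion fails. The remaining ingredients — Hoeffding's lemma and the Chernoff method — are entirely routine.
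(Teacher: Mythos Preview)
Your argument is correct and is exactly the classical bounded-differences proof of McDiarmid's inequality. Note, however, that the paper does not supply its own proof of this proposition: it is stated as a known result with a citation to McDiarmid's original paper, so there is no in-paper proof to compare against. Your proposal is the standard proof one finds in that reference.
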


\subsection{Abstract Decoupling Lemma}

\begin{pr} \emph{(Abstract decoupling lemma, \cite[Lemma~7.3]{PillYin})}\label{thm:abs_dec}
    Let $\mathcal{I}$ be a finite set which may depend on $N$ and let $\mathcal{I}_i\subset\mathcal{I}, 1\leq i\leq N$.  Let $\{x_{\alpha},\alpha\in\mathcal{I}\}$ be a collection of independent random variables and $S_1,\ldots,S_N$ be random variables which are functions of $\{x_{\alpha},\alpha\in\mathcal{I}\}$.
  Let $\mathbb{E}_i$ denote the expectation value operator with respect to $\{x_{\alpha},\alpha\in\mathcal{I}_i\}$.
  Define the commuting projection operators
  \begin{equation*}
    \QQ_i=1-\mathbb{E}_i,P_i=\mathbb{E}_i, P_i^2=P_i, \QQ_i^2=\QQ_i, [\QQ_i,P_j]=[P_i,P_j]=[\QQ_i,\QQ_j]=0
  \end{equation*}
  and for $\AA\subset\{1,2,\ldots,N\}$,
  \begin{equation*}
    \QQ_{\AA}:=\prod_{i\in \AA}\QQ_i, P_{\AA}:=\prod_{i\in \AA}P_i
  \end{equation*}
  We use the notation
  \begin{equation*}
    [\QQ S]=\frac{1}{N}\sum_{i=1}^{N}\QQ_iS_i
  \end{equation*}

  Let $\Xi$ be an event and $p$ an even integer, which may depend on $N$. Suppose the following assumptions hold with some constants $C_0, c_0>0$.
  \begin{itemize}
  \item[(i)]  There exist deterministic positive numbers $\mathcal{X}<1$ and $\mathcal{Y}$ such that for any set $\AA\subset\{1,2,\ldots,N\}$ with $i\in \AA$ and $|\AA|\leq p, \QQ_{\AA}S_i$ in $\Xi$ can be written as the sum of two new random variables:
  \begin{equation}\label{eq:abs_dec_cond_i}
      1(\Xi)(\QQ_{\AA}S_i)=S_{i,\AA}+1(\Xi)\QQ_{\AA} 1(\Xi^{\complement})\tilde{S}_{i,\AA}
  \end{equation}
  and
  \begin{equation*}
    |S_{i,\AA}|\leq \mathcal{Y}(C_0\mathcal{X}|\AA|)^{|\AA|}, |\tilde{S}_{i,A}|\leq \mathcal{Y}N^{C_0|\AA|}
    ;
  \end{equation*}
  \item[(ii)] 
    \begin{equation*}
      \max_{i}|S_i|\leq\mathcal{Y}N^{C_0}
      ;
    \end{equation*}
  \item[(iii)] 
    \begin{equation*}
      \Pb[\Xi^c]\leq e^{-c_0(\log N)^{3/2}p}
      .
    \end{equation*}
  \end{itemize}
  Then, under the assumptions (i), (ii), (iii) above, we have
  \begin{equation*}
    \E[\QQ S]^p\leq (Cp)^{4p}[\mathcal{X}^{2}+N^{-1}]^{p}\mathcal{Y}^p
  \end{equation*}
  for some $C>0$ and any sufficiently large $N$.
\end{pr}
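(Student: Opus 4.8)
The plan is to establish this by a fluctuation-averaging expansion. Since $p$ is even, I would start from
\[
  \E[\QQ S]^p=\frac{1}{N^p}\sum_{i_1,\ldots,i_p=1}^{N}\E\left[\prod_{k=1}^{p}\QQ_{i_k}S_{i_k}\right]
\]
and bound each summand separately. Fix a tuple $\mathbf{i}=(i_1,\ldots,i_p)$, let $\AA$ be its set of distinct values and $a:=|\AA|$. The first move is to expand every factor $\QQ_{i_k}S_{i_k}$ by inserting the resolution of identity $1=P_l+\QQ_l$ for each $l\in\AA$: since $\QQ_l^2=\QQ_l$, $P_l\QQ_l=0$ and the projections for distinct indices commute, every term in which $i_k\notin\mathbb{B}$ vanishes (because $\QQ_{i_k}P_{i_k}=0$), leaving
\[
  \QQ_{i_k}S_{i_k}=\sum_{i_k\in\mathbb{B}\subseteq\AA}\QQ_{\mathbb{B}}\,P_{\AA\setminus\mathbb{B}}S_{i_k}.
\]
Multiplying out, $\prod_k\QQ_{i_k}S_{i_k}$ becomes a sum over families $(\mathbb{B}_1,\ldots,\mathbb{B}_p)$ with $i_k\in\mathbb{B}_k\subseteq\AA$ of products of blocks $\QQ_{\mathbb{B}_k}P_{\AA\setminus\mathbb{B}_k}S_{i_k}$.

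The second step is a vanishing lemma: $\E\bigl[\prod_k\QQ_{\mathbb{B}_k}P_{\AA\setminus\mathbb{B}_k}S_{i_k}\bigr]=0$ unless every $l\in\AA$ belongs to at least two of the sets $\mathbb{B}_k$. Indeed, if some $l$ lies in exactly one $\mathbb{B}_{k_0}$, then each of the other factors carries $P_l=\E_l$ and hence does not depend on $\{x_\alpha:\alpha\in\Ic_l\}$; conditioning and pulling those factors out of $\E_l$, the remaining $\E_l$-average vanishes because $l\in\mathbb{B}_{k_0}$ and $\E_l\QQ_l=0$. Since each $l\in\AA$ lies in at least one $\mathbb{B}_k$ automatically (as $l=i_k\in\mathbb{B}_k$ for some $k$), ``two'' is the effective threshold, and this is the source of the averaging gain.

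Next, on $\Xi$ I would apply assumption (i) to each surviving block to write it as $S_{i_k,\mathbb{B}_k}$ with $|S_{i_k,\mathbb{B}_k}|\le\mathcal{Y}(C_0\mathcal{X}|\mathbb{B}_k|)^{|\mathbb{B}_k|}$ plus a remainder supported on $\Xi^{\complement}$. Since every element of $\AA$ is doubled, $\sum_k|\mathbb{B}_k|\ge 2a$, so the ``good'' part of a surviving term is at most $\mathcal{Y}^p$ times $\mathcal{X}^{\sum_k|\mathbb{B}_k|}$ (with $\sum_k|\mathbb{B}_k|\ge 2a$) together with $p$-dependent combinatorial factors. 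There are at most $N^a p^p$ tuples $\mathbf{i}$ with $a$ distinct values, which after the $N^{-p}$ prefactor produces $N^{a-p}$; collecting the contributions and summing over $1\le a\le p$ via $\sum_{a=0}^{p}N^{a-p}\mathcal{X}^{2a}\le\sum_{a=0}^{p}\binom{p}{a}\mathcal{X}^{2a}N^{a-p}=(\mathcal{X}^2+N^{-1})^p$ should give the claimed bound. The remainders supported on $\Xi^{\complement}$ are harmless: by assumption (ii) and the crude bound $|\tilde{S}_{i,\AA}|\le\mathcal{Y}N^{C_0|\AA|}$, such a product is at most $1(\Xi^{\complement})$ times a fixed power of $N$ times $\mathcal{Y}^{O(p)}$, and by assumption (iii) $\Pb[\Xi^{\complement}]$ is super-polynomially small in $N$, so these terms are negligible compared with $N^{-p}\mathcal{Y}^p$ and get absorbed.

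The hard part will be the combinatorial accounting in the third step. The constraint ``every index of $\AA$ appears in at least two of the $\mathbb{B}_k$'' must be used with care: enumerating the families $(\mathbb{B}_k)$ crudely costs of order $2^{ap}$, which would swamp the admissible prefactor $(Cp)^{4p}$, whereas simply discarding the constraint delivers only a power $\mathcal{X}^{p}$ instead of $\mathcal{X}^{2a}$. The way out is to write $\mathbb{B}_k=\{i_k\}\cup\mathbb{D}_k$ and observe that only the indices of multiplicity one in $\mathbf{i}$ impose an extra membership constraint on the $\mathbb{D}_k$ (indices of multiplicity $\ge 2$ being doubled already), then to trade each such forced membership against an extra factor of order $\mathcal{X}$ coming from the growth of the corresponding block; carrying this out uniformly over all values of $\mathcal{X}<1$ and tracking the constants so that the prefactor comes out as $(Cp)^{4p}$ is the delicate point.
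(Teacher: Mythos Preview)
The paper does not give its own proof of this proposition; it is quoted verbatim from \cite[Lemma~7.3]{PillYin} and used as a black box in Section~\ref{sec:strong_concentration}. Your sketch is the standard fluctuation-averaging argument used in that reference (and in closely related work of Erd\H{o}s--Yau--Yin): expand the $p$-th moment over index tuples, insert $1=\prod_{l\in\AA}(P_l+\QQ_l)$ in each factor, use commutativity and $\E_l\QQ_l=0$ to kill every family $(\mathbb{B}_k)$ in which some $l\in\AA$ appears only once, and then trade the forced double coverage $\sum_k|\mathbb{B}_k|\ge 2a$ for the extra power of $\mathcal{X}$. You also correctly isolate the genuine difficulty, namely that a crude count of families costs $2^{ap}$ and one must instead organise the enumeration so that the combinatorial prefactor is only $(Cp)^{4p}$.

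One technical point to be careful about: assumption~(i) controls $1(\Xi)\,\QQ_{\mathbb{B}}S_{i}$, not $\QQ_{\mathbb{B}}P_{\AA\setminus\mathbb{B}}S_{i}$, and the indicator $1(\Xi)$ does not commute with the conditional expectations $P_l$. The standard remedy is to expand each $P_{\AA\setminus\mathbb{B}}=\prod_{l\in\AA\setminus\mathbb{B}}(1-\QQ_l)$ so that every block becomes a signed sum of $\QQ_{\mathbb{D}}S_{i_k}$ with $\mathbb{B}\subseteq\mathbb{D}\subseteq\AA$, to each of which assumption~(i) applies directly; the resulting extra $2^{|\AA\setminus\mathbb{B}|}$ factors are harmless and get absorbed into $(Cp)^{4p}$. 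With that adjustment your outline matches the argument in the cited reference.
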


%====================================================================================
%qm Proofs 2
%====================================================================================

\section{Concentration of $m_N$}
\label{sec:proofs2}

In this section we fix $z\in\CC, \, 1+\delta \leq |z|\leq 6$ and prove the following theorem
\begin{thm}\label{thm:concentration_main}
    There exists $c>0$ such that for any $D>0$  uniformly in $\{ w\, : \, 0\leq E\leq c, N^{-1/2} \leq\eta\leq c\}$
    \begin{equation*}
      |m(z,w)-m_c(z,w)|
      =
      \oo{\frac{1}{\sqrt{N}}+\frac{1}{N\eta}+\frac{1}{\sqrt{\eta}N^{3/4}}}
    \end{equation*}
with probability at least $1-N^{-D}$.
\end{thm}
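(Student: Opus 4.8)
The plan is to follow the self-consistent equation approach of Bourgade–Yau–Yin \cite{BourYauYin}, adapted to the block structure of the matrix $X$ in \eqref{eq:linearisation_matrix}. The idea is to derive, for each diagonal block index $a\in\ZZ/n\ZZ$, an approximate self-consistent equation for the normalised partial trace $\LRI{m}{}{a}{G}$ (and $\LRI{m}{}{a}{\Gc}$), matching the fixed-point equation \eqref{eq:mc_equation} satisfied by $m_c$ up to an error controlled by $\Psi$. Concretely, I would first fix $z$ with $1+\delta\le|z|\le 6$ and restrict attention to the spectral domain $\{w=E+\sqrt{-1}\eta:0\le E\le c,\ N^{-1/2}\le\eta\le c\}$, where by Lemma~\ref{lem:mc_properties}(1) the point $w$ sits strictly to the left of the support $(\lambda_-,\lambda_+)$ of $\rho_z$, so the estimates \eqref{eq:mc_equivalences} on $\Re m_c$ and $\Im m_c$ are available.

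First I would establish the structural identities. Using the Schur complement formula \eqref{eq:schur_complement} applied to rows/columns in a single diagonal block, together with the linear-algebra identity $A(A^*A-w)^{-1}A^*=I+w(AA^*-w)^{-1}$, one expresses $\LRI{G}{}{a}{ii}$ in terms of a quadratic form $\LRI{y}{}{}{i(a)} G^{(i(a))}\LRI{y}{}{}{i(a)}^*$ in the $i(a)$th row of $Y_z$ and the corresponding minor resolvent. Because of the block structure, the $i(a)$th row of $X$ only has nonzero entries in block $a+1$, so the quadratic form splits into: a deterministic shift from the $-z$ term, a term involving $\sum_j|\LRI{x}{}{(a+1)}{ij}|^2 \LRI{G}{(i(a))}{a+1}{jj}$, and off-diagonal fluctuations. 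Applying the large deviation estimates of Lemma~\ref{lem:lde_byy} (the three bounds \eqref{eq:lde_thm_1}–\eqref{eq:lde_thm_3}) shows these quadratic forms concentrate around $\sigma^2\tr G^{(i(a))}$ restricted to block $a+1$, i.e. around $\LRI{m}{(i(a))}{a+1}{G}$, with error of order $\Psi$ with $\zeta$-high probability; the variance-profile bookkeeping uses Lemma~\ref{lem:im_of_resolvent_diag_and_product} to convert $\sum_k|G_{ki}|^2$ into $\Im G_{ii}/\eta$. Replacing minor traces by full traces via Lemmas~\ref{lem:bord_guion_minor_estimate} and \ref{lem:minor_differences} (these cost only $O(1/(N\eta))$), I would obtain a closed approximate system: $\LRI{m}{}{a}{G}$ equals, up to $O(\Psi)$, a function of $w$, $|z|^2$ and $\LRI{m}{}{a+1}{G}$, $\LRI{m}{}{a-1}{\Gc}$, which after symmetrising over the cyclic index collapses to the scalar equation \eqref{eq:mc_equation} perturbed by $O(\Psi)$.

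The second half is the stability/bootstrap argument. On the domain in question, \eqref{eq:mc_equivalences} guarantees that the derivative of the self-consistent map at $m_c$ is bounded away from the critical value $1$, so the fixed-point equation is stable: $\Lambda\le C\Psi$ as long as one has an a priori bound $\Lambda\le(\log N)^{-1}$ or so. I would run a continuity argument in $\eta$, starting from large $\eta=c$ where $\Lambda$ is trivially small, and decreasing $\eta$ along the segment; at each step the improved bound $\Lambda\le C\Psi$ feeds back into the definition $\Psi=N^{-1/2}+\sqrt{\Lambda/(N\eta)}+1/(N\eta)$, yielding $\Lambda=\OO{N^{-1/2}+1/(N\eta)}$, hence $\Lambda=\oo{N^{-1/2}+1/(N\eta)+1/(\sqrt\eta N^{3/4})}$, with $\zeta$-high probability. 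Since $|m(z,w)-m_c|\le\frac1n\sum_a|\LRI{m}{}{a}{G}-m_c|\le\Lambda$, and $\zeta$-high probability beats $N^{-D}$ for any $D$ once $\zeta$ is fixed large, this gives the theorem. Getting the conclusion uniform in $w$ on the domain follows from a standard net argument together with the Lipschitz/derivative bounds of Lemma~\ref{lem:derivative_estimates} (cost $O(\eta^{-2})$ per unit step, absorbed by a polynomially fine net).

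The main obstacle I anticipate is precisely the point flagged in the introduction: the matrix $X$ has a huge number of \emph{deterministic zero entries} (all blocks off the cyclic super-diagonal vanish), so the variance profile is far from the flat profile of \cite{BourYauYin}. This means the quadratic forms arising in the Schur expansion only see one off-diagonal block at a time, the self-consistent equation couples neighbouring blocks rather than being a single scalar equation from the outset, and several of the "fluctuation averaging" steps — in particular the use of the abstract decoupling Lemma~\ref{thm:abs_dec} to gain the extra factor $\mathcal X$ in the high-moment bound for $[\QQ\, m]$ — must be re-checked to ensure the projections $\QQ_i$ and the bounds \eqref{eq:abs_dec_cond_i} still apply when many entries are identically zero. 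Keeping track of which minors $G^{(\TT,\UU)}$ are needed, and verifying that the $\zeta$-high-probability estimates survive the block bookkeeping without losing powers of $n$ or $N$, is where the real work lies; the stability analysis, by contrast, is essentially identical to the single-matrix case once \eqref{eq:mc_equivalences} is in hand.
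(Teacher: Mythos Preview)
Your outline of the first half (Schur complement, block bookkeeping, large-deviation estimates, and the continuity bootstrap in $\eta$) matches the paper's Sections~5.1--5.2 and would indeed yield the weak bound $\Lambda=\OO{\varphi^{C}\Psi}$, which after self-improvement becomes $\Lambda=\OO{\varphi^{C}\bigl(N^{-1/2}+(N\eta)^{-1}\bigr)}$. The gap is in the sentence that follows: the implication
\[
\Lambda=\OO{N^{-1/2}+\tfrac{1}{N\eta}}\ \Longrightarrow\ \Lambda=\oo{N^{-1/2}+\tfrac{1}{N\eta}+\tfrac{1}{\sqrt{\eta}\,N^{3/4}}}
\]
is false. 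At $\eta=N^{-1/2}$ all three terms on the right are of order $N^{-1/2}$, so the right-hand side is $\sim N^{-1/2}$ and a big-$O(N^{-1/2})$ bound is simply not a little-$o(N^{-1/2})$ bound. (The polylogarithmic factor $\varphi^{C}$ coming out of the large-deviation estimates makes things worse, not better.) The weak concentration step, no matter how many times you iterate it, only produces $\Lambda\le C\varphi^{C}\Psi$ with a fixed constant, because the error in the self-consistent equation is itself $\OO{\varphi^{C}\Psi}$.

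What is missing is the entire ``strong concentration'' step (Section~5.3 in the paper). One must go back to the Schur expansion and keep the next order: expanding $(\LRI{G}{}{a}{ii})^{-1}$ around $(\LRI{m}{}{a}{G})^{-1}$ produces a linear system whose right-hand side consists of $\OO{\Psi^2}$ terms, minor-difference terms that are shown to be $\OO{\Psi^2}$, and averages $\bigl[\LRI{\Zc}{}{a}{i}\bigr]$, $\bigl[\LRI{Z}{(i)}{a}{i}\bigr]$ of the fluctuation variables. It is precisely the fluctuation-averaging step (the abstract decoupling Lemma~\ref{thm:abs_dec}) that shows these averages are $\OO{\tilde\Psi/N^{1/10}}$, i.e.\ genuinely smaller than $\Psi$ by a power of $N$. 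Only after this does the stability of $\Gamma$ give $\Lambda=\oo{N^{-1/2}+(N\eta)^{-1}+\eta^{-1/2}N^{-3/4}}$. You mention the decoupling lemma in your obstacle paragraph as something to ``re-check'', but it is not a technical verification --- it is the mechanism that converts the big-$O$ into the little-$o$ the theorem asserts, and your proof sketch never invokes it.
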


Using the above theorem together with the Borel-Cantelli lemma we can deduce that the condition~(ii) of Theorem~\ref{thm:two_condition_thm} holds almost surely for $N$ large enough.

\subsection{System of ``self-consistent equations''}
The aim of this section is to prove Theorem~\ref{thm:sce}. 
We begin with three independent lemmas.
\begin{lem}\label{lem:sce_schur}
  For any $\TT,\UU\subset \{1,\ldots,nN\}$ and $i,j\in\{1,\ldots,nN\}\setminus\TT,i\neq j$, we have
  \begin{EA}{c}\EAy\label{eq:sce_schur_1}
    \frac{1}{G_{ii}^{(\TT,\UU)}}
    =
    -w(1+\yb_i^*\Gc^{(\TT i,\UU)}\yb_i)
    ,\quad
    G^{(\TT,\UU)}_{ij}
    =    
    -wG^{(\TT,\UU)}_{ii}G^{(\TT i,\UU)}_{jj}\left(\yb_i^*\Gc^{(\TT ij,\UU)}\yb_j\right)
    ,\\ \EAy \label{eq:sce_schur_2}
    \frac{1}{\Gc_{ii}^{(\UU,\TT)}}
    =
    -w(1+y_iG^{(\UU,\TT i)}y_i^*)   
    ,\quad
    \Gc^{(\UU,\TT)}_{ij}
    =
    -w\Gc^{(\UU,\TT)}_{ii}\Gc^{(\UU ,\TT i)}_{jj}\left(y_iG^{(\UU,\TT ij)}y_j^*\right)
    .
  \end{EA}
\end{lem}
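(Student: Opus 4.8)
## Proof proposal for Lemma~\ref{lem:sce_schur}

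The plan is to derive all four identities from the Schur complement formula \eqref{eq:schur_complement} applied to the matrix $Y_z^{(\TT,\UU)*}Y_z^{(\TT,\UU)}-w$ (respectively $Y_z^{(\UU,\TT)}Y_z^{(\UU,\TT)*}-w$), combined with the resolvent identity of the second linear-algebra lemma that relates $A(A^*A-w)^{-1}A^*$ to $I+w(AA^*-w)^{-1}$. I will prove the first column of \eqref{eq:sce_schur_1}–\eqref{eq:sce_schur_2}; the $\Gc$-versions follow by the symmetric roles of rows and columns (i.e.\ by replacing $Y_z$ with $Y_z^*$), so I will only indicate the bookkeeping change.

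\emph{Diagonal identity.} Fix $i\notin\TT$ and write $A:=Y_z^{(\TT,\UU)}$. Apply the Schur complement formula to $A^*A-w$, singling out the $i$th row and column: the $(i,i)$ entry of its inverse $G^{(\TT,\UU)}$ satisfies
\begin{equation*}
  \frac{1}{G^{(\TT,\UU)}_{ii}}
  =
  (A^*A-w)_{ii}-(A^*A)_{i\,\cdot}^{(i)}\bigl[(A^*A-w)^{(i)}\bigr]^{-1}(A^*A)_{\cdot\,i}^{(i)}
  ,
\end{equation*}
where the superscript $(i)$ denotes deletion of the $i$th index. Now $(A^*A-w)_{ii}=\|\yb_i\|^2-w$ and $(A^*A)_{\cdot\,i}^{(i)}=(A^{(i)})^*\yb_i$, while $\bigl[(A^*A-w)^{(i)}\bigr]^{-1}$ is precisely $G^{(\TT i,\UU)}$. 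Hence the subtracted term equals $\yb_i^*\,A^{(i)}G^{(\TT i,\UU)}(A^{(i)})^*\,\yb_i$, and applying the Woodbury-type identity (the second linear-algebra lemma, with $A^{(i)}$ in place of $A$) gives $A^{(i)}G^{(\TT i,\UU)}(A^{(i)})^*=I+w\,\Gc^{(\TT i,\UU)}$. Therefore
\begin{equation*}
  \frac{1}{G^{(\TT,\UU)}_{ii}}
  =
  \|\yb_i\|^2-w-\yb_i^*\yb_i-w\,\yb_i^*\Gc^{(\TT i,\UU)}\yb_i
  =
  -w\bigl(1+\yb_i^*\Gc^{(\TT i,\UU)}\yb_i\bigr),
\end{equation*}
which is the first identity in \eqref{eq:sce_schur_1}. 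The one subtlety is checking that deleting column $i$ from $Y_z^{(\TT,\UU)}$ yields the matrix whose associated ``$\Gc$-resolvent'' is exactly $\Gc^{(\TT i,\UU)}$ in the paper's indexing convention; this is immediate from the definitions in Section~\ref{sec:notation}, since $\TT$ indexes deleted columns of $Y_z$ and $\UU$ indexes deleted rows.

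\emph{Off-diagonal identity.} For $i\neq j$, both outside $\TT$, apply the Schur complement formula to $A^*A-w$ by deleting the pair $\{i,j\}$ at once; the $2\times2$ block inverse formula then expresses the $(i,j)$ entry of $G^{(\TT,\UU)}$ in terms of $G^{(\TT i,\UU)}_{jj}$, $G^{(\TT,\UU)}_{ii}$ and the $(i,j)$ entry of the $2\times2$ Schur complement, which is $-\bigl[(A^*A)_{i\,\cdot}-\text{(projection)}\bigr]_j = w\,\yb_i^*(I+w\Gc^{(\TT ij,\UU)})\yb_j/(-w)$ after again invoking the Woodbury identity applied to $A^{(ij)}$. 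Since $\yb_i^*\yb_j$ is cancelled by the corresponding entry of $A^*A$, what remains is $w\,\yb_i^*\Gc^{(\TT ij,\UU)}\yb_j$, giving
\begin{equation*}
  G^{(\TT,\UU)}_{ij}
  =
  -w\,G^{(\TT,\UU)}_{ii}\,G^{(\TT i,\UU)}_{jj}\,\bigl(\yb_i^*\Gc^{(\TT ij,\UU)}\yb_j\bigr).
\end{equation*}
The identities \eqref{eq:sce_schur_2} are obtained verbatim by running the same argument on $Y_z^{(\UU,\TT)}Y_z^{(\UU,\TT)*}-w$: now one deletes \emph{rows} $i$ (and $j$) of $Y_z$, the roles of $G$ and $\Gc$ interchange, row vectors $y_i$ replace column vectors $\yb_i$, and the Woodbury identity is applied in the form $A(A^*A-w)^{-1}A^*=I+w(AA^*-w)^{-1}$ with $A=Y_z^{(\UU,\TT i)*}$.

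I expect the only real care to be needed in the off-diagonal case — keeping track of which minors appear (the asymmetry $\TT i$ versus $\TT ij$ in the three resolvent factors) and verifying the sign via the $2\times 2$ block-inverse formula — but there is no analytic difficulty: everything is a finite-dimensional identity valid whenever the relevant matrices are invertible, which holds for $w\in\CC_+$.
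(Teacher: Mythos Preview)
Your proposal is correct and follows exactly the standard route: Schur complement applied to $Y_z^{(\TT,\UU)*}Y_z^{(\TT,\UU)}-w$ together with the Woodbury-type identity $A(A^*A-w)^{-1}A^*=I+w(AA^*-w)^{-1}$. The paper does not give its own argument here but simply cites \cite[Lemma~6.5]{BourYauYin}, whose proof proceeds precisely as you outline; so your write-up is essentially supplying the details the paper delegates to that reference.
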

\begin{proof}
  See \cite[Lemma~6.5]{BourYauYin}.
\end{proof}

Define a subset of $\CC$
\begin{EA}{c}
  S_0
  :=
  \{w\in\CC+\,:\,  w=E+\sqrt{-1}\eta,\, 0\leq E\leq \lambda_-(z)/2,\, 0\leq\eta\leq 1\}
  .
\end{EA}

\begin{lem}\label{lem:prop_of_mc_close_functions}
There exist $\alpha>0$ small enough and $C>0$, such that for any $h_i:\CC\times\CC_+\rightarrow \CC,i\in\{1,2\}$, for all $w\in S_0$ if
  \begin{equation*}
    \max_{i\in\{1,2\}}|h_i(z,w)-m_c(z,w)|\leq 2\alpha
  \end{equation*}
holds, then
  \begin{EA}{rll}\EAy\label{eq:prop_of_mc_close_functions_1}
   (i) & \quad &\left| 1+h_1(z,w)\right| 
   \sim_C
   |h_1(z,w)|
   \sim_C
   1
   ,\\ \EAy\label{eq:prop_of_mc_close_functions_2}
   (ii) & \quad &\left|\Im \frac{1}{w(1+h_1(z,w))}\right| 
   \leq
   C\left(\frac{\eta}{|w|^2}+\frac{|h_1(z,w)-m_c(z,w)}{|w|}\right)
   , \\ \EAy\label{eq:prop_of_mc_close_functions_3}
   (iii) & \quad &\left|w(1+h_1(z,w))-\frac{|z|^2}{1+h_2(z,w)}+\frac{1}{m_c(z,w)}\right|
   \leq
   C\max_i|h_i(z,w)-m_c(z,w)|.
  \end{EA}
\end{lem}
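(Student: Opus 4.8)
The plan is to derive all three estimates from the defining cubic equation \eqref{eq:mc_equation} for $m_c$, treating $h_1,h_2$ as small perturbations of $m_c$. First I would record the basic facts about $m_c$ on $S_0$ that follow from Lemma~\ref{lem:mc_properties}: since $E\leq\lambda_-(z)/2$ stays bounded away from the spectral edge $\lambda_-$, statement (2) of that lemma gives $|\Re m_c|\sim 1$, $\Re m_c\geq 0$ and $\Im m_c\sim\eta$; in particular $|m_c|\sim 1$ and (using the equation to exclude $m_c=-1$) also $|1+m_c|\sim 1$ uniformly on $S_0$. Choosing $\alpha$ small enough that the $2\alpha$-ball around $m_c$ stays inside the region where $|m_c|$ and $|1+m_c|$ are comparable to $1$, statement (i) is immediate: $|h_1|\geq|m_c|-2\alpha\gtrsim 1$ and $|h_1|\leq|m_c|+2\alpha\lesssim 1$, and likewise for $|1+h_1|$. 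This also shows $1+h_1$ is bounded away from $0$, which is what makes the other two parts go through.

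For (ii), I would write
\begin{equation*}
  \Im\frac{1}{w(1+h_1)}
  =
  \Im\frac{1}{w}\cdot\frac{1}{1+h_1}+\frac{1}{\xo{w}}\Im\frac{1}{1+h_1},
\end{equation*}
or more cleanly estimate $\bigl|\Im\tfrac{1}{w(1+h_1)}\bigr|\leq\bigl|\tfrac{1}{w(1+h_1)}-\tfrac{1}{w(1+m_c)}\bigr|+\bigl|\Im\tfrac{1}{w(1+m_c)}\bigr|$. The first term is bounded, using (i) and $|1+m_c|\sim 1$, by $C|h_1-m_c|/|w|$. For the second term, $\Im\tfrac{1}{w(1+m_c)}$, I would use the defining equation \eqref{eq:mc_equation} in the form $\tfrac{1}{w(1+m_c)}=-\tfrac{1}{w^2 m_c}\cdot\bigl(\tfrac{1}{m_c}-\tfrac{|z|^2}{1+m_c}\bigr)^{-1}\cdots$ — actually more directly, from \eqref{eq:mc_equation} one has $w(1+m_c)=\tfrac{|z|^2}{1+m_c}-\tfrac{1}{m_c}$, so
\begin{equation*}
  \frac{1}{w(1+m_c)}
  =
  \frac{m_c(1+m_c)}{|z|^2 m_c-(1+m_c)},
\end{equation*}
and taking imaginary parts, using $\Im m_c\sim\eta$ together with the boundedness of $m_c$, $1+m_c$ away from $0$, gives $\bigl|\Im\tfrac{1}{w(1+m_c)}\bigr|\leq C\eta/|w|^2$ once one checks the denominator $|z|^2 m_c-(1+m_c)$ stays bounded away from $0$ on $S_0$ (which again follows from \eqref{eq:mc_equation} and Lemma~\ref{lem:mc_properties}, since that denominator equals $-1/(w(1+m_c))\cdot(\text{nonzero})$). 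Combining the two bounds yields (ii).

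For (iii), the point is that $m_c$ exactly satisfies $w(1+m_c)-\tfrac{|z|^2}{1+m_c}+\tfrac{1}{m_c}=0$ by \eqref{eq:mc_equation}, so the left-hand side of \eqref{eq:prop_of_mc_close_functions_3} is
\begin{equation*}
  \Bigl(w(1+h_1)-\frac{|z|^2}{1+h_2}+\frac{1}{m_c}\Bigr)-\Bigl(w(1+m_c)-\frac{|z|^2}{1+m_c}+\frac{1}{m_c}\Bigr)
  =
  w(h_1-m_c)-|z|^2\Bigl(\frac{1}{1+h_2}-\frac{1}{1+m_c}\Bigr),
\end{equation*}
and each difference is controlled: $|w|\leq C$ on $S_0$, $|z|\leq c$ is bounded, and $\bigl|\tfrac{1}{1+h_2}-\tfrac{1}{1+m_c}\bigr|\leq|h_2-m_c|/(|1+h_2||1+m_c|)\leq C|h_2-m_c|$ by part (i). Hence the whole expression is $\leq C\max_i|h_i-m_c|$, which is \eqref{eq:prop_of_mc_close_functions_3}. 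The main obstacle is really the bookkeeping in part (ii): one must be careful that the various denominators appearing when manipulating \eqref{eq:mc_equation} ($|z|^2 m_c-(1+m_c)$, and $w$ itself near $w=0$) do not degenerate on $S_0$, and that the $\eta/|w|^2$ rather than $\eta$ is the correct power — this is exactly why the factor $|w|^{-2}$ appears in the statement, and it comes from the single power of $w$ in $w(1+m_c)$ being inverted. Everything else is elementary perturbation bookkeeping once $\alpha$ is fixed small in terms of the $\delta$-dependent constants from Lemma~\ref{lem:mc_properties}.
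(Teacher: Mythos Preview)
Your arguments for (i) and (iii) are correct and essentially identical to the paper's.

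For (ii), your decomposition into $\bigl|\Im\tfrac{1}{w(1+m_c)}\bigr|$ plus the perturbation $\bigl|\tfrac{1}{w(1+h_1)}-\tfrac{1}{w(1+m_c)}\bigr|$ is exactly what the paper does, and your bound on the perturbation piece is right. The gap is in your treatment of $\Im\tfrac{1}{w(1+m_c)}$. The denominator $|z|^2 m_c-(1+m_c)$ is \emph{not} bounded away from zero on $S_0$: multiplying \eqref{eq:mc_equation} by $m_c(1+m_c)$ gives $|z|^2 m_c-(1+m_c)=w\,m_c(1+m_c)^2$, so its modulus is $\sim|w|$ and vanishes as $w\to 0$. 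Your parenthetical even hints you suspected this; the claim as stated is false. The detour through the self-consistent equation can be salvaged by using $|\,|z|^2 m_c-(1+m_c)\,|\sim|w|$ rather than $\gtrsim 1$, but it is unnecessary. The paper simply computes directly
\[
  \Im\frac{1}{w(1+m_c)}
  \;=\; -\,\frac{\Im\bigl(w(1+m_c)\bigr)}{|w(1+m_c)|^2}
  \;=\; -\,\frac{\eta\,\Re(1+m_c)+E\,\Im m_c}{|w|^{2}\,|1+m_c|^{2}},
\]
and then invokes $\Re(1+m_c)\sim|1+m_c|\sim 1$ and $\Im m_c\sim\eta$ from \eqref{eq:mc_equivalences} (with $E$ bounded on $S_0$) to obtain the $C\eta/|w|^{2}$ bound immediately. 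No appeal to \eqref{eq:mc_equation} is needed for this term.
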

\begin{proof} 
  Let $\epsilon\in\{0,1\}$.
  By \eqref{eq:mc_equivalences} and the definition \eqref{eq:equivalence_def} there exists $c>0$ such that for any $w\in S_0$ 
\begin{equation*}
  c
  \leq 
  |\epsilon+m_c(z,w)|
  \leq
  c^{-1}.
\end{equation*}
Then from the triangular inequality
\begin{equation*}
  |\epsilon+m_c(z,w)|-|h_1(z,w)-m_c(z,w)|
  \leq
  |\epsilon+h_1(z,w)|
  \leq
  |\epsilon +m_c(z,w)|+|h_1(z,w)-m_c(z,w)|
\end{equation*}
\eqref{eq:prop_of_mc_close_functions_1} holds for $\alpha$ small enough.

In the rest of the proof the $(z,w)$ argument will be suppressed. 
To prove \eqref{eq:prop_of_mc_close_functions_2} rewrite the left-hand side as
\begin{equation*}
  \frac{1}{w(1+h_1)}
  =
  \frac{1}{w(1+m_c+(h_1-m_c))}
  =
  \frac{1}{w(1+m_c)}-\frac{h_1-m_c}{w(1+m_c)(1+h_1)}.
\end{equation*}
From \eqref{eq:mc_equivalences} we know that $\Re m_c \geq 0$ on $S_{0}$, so that
\begin{equation*}
  \left|\Im \frac{1}{w(1+m_c)}\right|
  =
  \frac{ \eta\Re(1+m_c)+E \Im m_c}{|w(1+m_c)|^2}.
\end{equation*}
Now \eqref{eq:prop_of_mc_close_functions_2} follows from the fact that $|1+m_c|^{-1}\sim |1+h_1|^{-1}\sim (1+\Re m_c)\sim 1$ and $\Im m_c\sim \eta$.

Rewriting the left-hand side of \eqref{eq:prop_of_mc_close_functions_3} as
\begin{equation*}
  1+h_1-\frac{|z|^2}{w(1+h_2)}+\frac{1}{wm_c}
  =
  1+m_c-\frac{|z|^2}{w(1+m_c)}+\frac{1}{wm_c} +(h_1-m_c)-\frac{|z|^2(h_2-m_c)}{w(1+m_c)(1+h_2)}
\end{equation*}
and using \eqref{eq:mc_equation} we obtain the last inequality.
\end{proof}

\begin{lem}\label{lem:lde}
Let $\zeta>0$. 
Then there exists $Q_{\zeta}>0$ such that for all sufficiently large $N$, for any $\TT,\UU\subset \{1,\ldots,nN\}$, for any $a\in\ZZ/n\ZZ$ and $\{i,j\}\subset\{1,\ldots,N\}$ such that $\{i(a),j(a)\}\subset\TT$ ($i=j$ is allowed), for $(z,w)\in S_0$  with $\zeta$-high probability
  \begin{equation}\label{eq:lde_1}
    (1-\EE_{\yb_{i(a)}\yb_{j(a)}})\left[\sum_{k,l=1}^{nN} \xo{x}_{ki(a)} \Gc^{(\TT,\UU)}_{kl} x_{lj(a)}  \right]
    =
    \OO{\varphi^{Q_{\zeta}}\left(\Psi+\frac{|\TT|+|\UU|}{N\eta}\right)}
    ,
  \end{equation}
and if $i(a)\notin \UU$, then
\begin{equation}\label{eq:lde_2}
    (1-\EE_{\yb_{i(a)}})\left[\sum_{k}^{nN} \xo{x}_{ki(a)} \Gc^{(\TT,\UU)}_{ki(a)}  \right]
    =
    \OO{\varphi^{Q_{\zeta}}\sqrt{\frac{\Im \LRI{\Gc}{(\TT,\UU)}{a}{ii}}{N\eta}}}
    ,\quad
    (1-\EE_{\yb_{i(a)}})\left[\sum_{k}^{nN} \Gc^{(\TT,\UU)}_{i(a)k} {x}_{i(a)k} \right]
    =
    \OO{\varphi^{Q_{\zeta}}\sqrt{\frac{\Im \LRI{\Gc}{(\TT,\UU)}{a}{ii}}{N\eta}}}
    .
\end{equation}
\end{lem}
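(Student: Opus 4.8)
The plan is to reduce each of the three identities to a direct application of the large-deviation estimates of Lemma~\ref{lem:lde_byy}, after carefully separating the randomness of the rows $\yb_{i(a)}, \yb_{j(a)}$ of $Y_z$ from the resolvent minor $\Gc^{(\TT,\UU)}$, which by construction does not depend on those rows. The key structural observation is that, because of the block structure \eqref{eq:linearisation_matrix} of $X$, the column $\xb_{i(a)}$ of $X$ has nonzero entries only in the block indexed by $a-1$ (coming from the block $X_{a-1}$), so the sums over $k,l$ in \eqref{eq:lde_1} and \eqref{eq:lde_2} are effectively sums over the $N$ indices in that single block; in particular the entries $x_{ki(a)}$ appearing are exactly the independent, mean-zero, variance-$N^{-1}$ entries of $X_{a-1}$ satisfying \eqref{eq:unif_decay_cond}. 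Thus the coefficient matrix $B = \Gc^{(\TT,\UU)}$ restricted to that block is independent of the $a_i := x_{ki(a)}$, and \eqref{eq:lde_thm_1}, \eqref{eq:lde_thm_2}, \eqref{eq:lde_thm_3} apply conditionally on $\Gc^{(\TT,\UU)}$.

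For \eqref{eq:lde_2}: fix $i(a)\notin\UU$ and condition on everything except $\yb_{i(a)}$. The sum $\sum_k \xo{x}_{ki(a)} \Gc^{(\TT,\UU)}_{ki(a)}$ has a nonrandom (in $\yb_{i(a)}$) coefficient vector $(\Gc^{(\TT,\UU)}_{ki(a)})_k$, so subtracting its $\yb_{i(a)}$-expectation and applying \eqref{eq:lde_thm_1} with $\xi = \varphi$-type exponent gives, with $\zeta$-high probability, a bound of order $\varphi^{Q_\zeta} N^{-1/2}(\sum_k |\Gc^{(\TT,\UU)}_{ki(a)}|^2)^{1/2}$. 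By Lemma~\ref{lem:im_of_resolvent_diag_and_product} (applied to $\Gc$), $\sum_k |\Gc^{(\TT,\UU)}_{ki(a)}|^2 = \eta^{-1}\Im\Gc^{(\TT,\UU)}_{i(a)i(a)} = \eta^{-1}\Im\,\LRI{\Gc}{(\TT,\UU)}{a}{ii}$ (using the index convention for diagonal blocks), which gives exactly $\varphi^{Q_\zeta}\sqrt{\Im\,\LRI{\Gc}{(\TT,\UU)}{a}{ii}/(N\eta)}$. The second identity in \eqref{eq:lde_2} is identical after transposing, using the row $\yb$ in place of the column $\xb$.

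For \eqref{eq:lde_1}: here there are two cases. If $i=j$, write the sum as $\sum_k |x_{ki(a)}|^2 \Gc^{(\TT,\UU)}_{kk} + \sum_{k\neq l}\xo{x}_{ki(a)}\Gc^{(\TT,\UU)}_{kl}x_{li(a)}$; subtract the conditional expectation, which equals $N^{-1}\sum_k \Gc^{(\TT,\UU)}_{kk}$ over the relevant block, and apply \eqref{eq:lde_thm_2} to the diagonal part and \eqref{eq:lde_thm_3} to the off-diagonal part. The diagonal contribution is $O(\varphi^{Q_\zeta} N^{-1}(\sum_k |\Gc_{kk}|^2)^{1/2}) = O(\varphi^{Q_\zeta} N^{-1/2})$ using $|\Gc_{kk}|\le\eta^{-1}$ crudely or, better, the bound $\sum_k|\Gc_{kk}|^2 \lesssim N$ from $|\Gc_{kk}|\le C$-type control; the off-diagonal contribution is $O(\varphi^{Q_\zeta} N^{-1}(\sum_{k\neq l}|\Gc_{kl}|^2)^{1/2})$, and $\sum_{k,l}|\Gc_{kl}|^2 = \eta^{-1}\sum_k \Im\Gc_{kk} = n N\eta^{-1}\Im m_\Gc$, which together with $\Im m_\Gc \lesssim \Lambda + \eta + \cdots$ produces the $\Psi$ term. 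If $i\neq j$, condition on all rows except $\yb_{i(a)}$ and $\yb_{j(a)}$: the two columns $\xb_{i(a)}, \xb_{j(a)}$ are independent, the bilinear form $\sum_{k,l}\xo{x}_{ki(a)}\Gc^{(\TT,\UU)}_{kl}x_{lj(a)}$ has mean zero in $\yb_{j(a)}$ for fixed $\yb_{i(a)}$, and applying \eqref{eq:lde_thm_1} twice (first in $\yb_{j(a)}$, then in $\yb_{i(a)}$) gives a bound of order $\varphi^{Q_\zeta} N^{-1}(\sum_{k,l}|\Gc_{kl}|^2)^{1/2}$, hence again the $\Psi$ term. The passage from minor resolvents $\Gc^{(\TT,\UU)}$ back to $\Gc$ in the estimates of $\sum|\Gc^{(\TT,\UU)}_{kl}|^2$ and $\sum \Im \Gc^{(\TT,\UU)}_{kk}$, which must be controlled in terms of $\Psi$ and $\Lambda$ uniformly in $\TT,\UU$, is where the extra error $(|\TT|+|\UU|)/(N\eta)$ enters: it is produced by iterating Lemma~\ref{lem:bord_guion_minor_estimate} at most $|\TT|+|\UU|$ times, each removal of an index costing $O((N\eta)^{-1})$ in the normalized trace.

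The main obstacle is bookkeeping rather than conceptual: one must verify that the coefficient matrix in each sum really is independent of the row(s) being averaged — this uses both the minor notation (the superscripts $(\TT,\UU)$ already delete the relevant columns) and the sparse block structure of $X$ — and one must track uniformity of all the resolvent bounds in $\TT,\UU$ and in $w\in S_0$, assembling the terms $N^{-1/2}$, $\sqrt{\Lambda/(N\eta)}$, $(N\eta)^{-1}$, and $(|\TT|+|\UU|)/(N\eta)$ into the claimed $O(\varphi^{Q_\zeta}(\Psi + (|\TT|+|\UU|)/(N\eta)))$. Choosing the exponent $\xi$ in Lemma~\ref{lem:lde_byy} to be a suitable multiple of $\varphi^{1/2}$ (or of $\log N$) and using $\phi$ from that lemma, together with a union bound over the at most polynomially many conditioning steps, yields the $\zeta$-high-probability statement with $Q_\zeta$ depending only on $\zeta$, $\theta$, and $n$.
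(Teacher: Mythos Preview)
Your proposal is correct and follows essentially the same route as the paper: apply Lemma~\ref{lem:lde_byy} conditionally on $\Gc^{(\TT,\UU)}$ (which is independent of the relevant columns since $\{i(a),j(a)\}\subset\TT$), then control $\sum_{k,l}|\Gc^{(\TT,\UU)}_{kl}|^2$ via Lemma~\ref{lem:im_of_resolvent_diag_and_product}, pass from the minor trace to $\Im m_{\Gc}$ with Lemma~\ref{lem:bord_guion_minor_estimate} (producing the $(|\TT|+|\UU|)/(N\eta)$ term), and finally use $\Im m_c\sim\eta$ on $S_0$ to arrive at $\Psi$. Two cosmetic slips: your separate diagonal bound $O(\varphi^{Q_\zeta}N^{-1/2})$ is not justified (you do not yet have $|\Gc_{kk}|\le C$, and $|\Gc_{kk}|\le\eta^{-1}$ gives $N^{-1/2}\eta^{-1}$), but this is harmless since $\sum_k|\Gc_{kk}|^2\le\sum_{k,l}|\Gc_{kl}|^2$ already; and the exponent $\xi$ in Lemma~\ref{lem:lde_byy} should be taken as a multiple of $\log\log N$ (the paper uses $\xi=\zeta\phi^{-1}\log\log N$, $Q_\zeta=\zeta/\phi$), not of $\varphi^{1/2}$ or $\log N$.
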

The above result is valid if we take the matrix $G^{(\UU,\TT)}$ and rows $y_{i(a)}$ instead of $\Gc^{(\TT,\UU)}$ and $\yb_{i(a)}$.
\begin{proof}
Consider the case $i=j$. 
Then
\begin{equation*}
  \EE_{\yb_{i(a)}}\left[\sum_{k,l=1}^{nN} \xo{x}_{ki(a)} \Gc^{(\TT,\UU)}_{kl} x_{li(a)}  \right]
  =
  \frac{1}{N}\sum_{k=1}^{N}\LRI{\Gc}{(\TT,\UU)}{a-1}{kk}
\end{equation*}
and
\begin{EA}{lcl}
  (1-\EE_{\yb_{i(a)}})\left[\sum_{k,l=1}^{nN} \xo{x}_{ki(a)} \Gc^{(\TT,\UU)}_{kl} x_{li(a)}  \right]
  &=&
  \sum_{\substack{k,l=1\\k\neq l}}^{N} \xo{x}_{k(a-1)i(a)} \LRI{\Gc}{(\TT,\UU)}{a-1}{kl} x_{l(a-1)i(a)}
  \\
  &&
  +
  \sum_{k=1}^{N} \xo{x}_{k(a-1)i(a)} \LRI{\Gc}{(\TT,\UU)}{a-1}{kk} x_{k(a-1)i(a)} - \frac{1}{N}\sum_{k=1}^{N}\LRI{\Gc}{(\TT,\UU)}{a-1}{kk}
  .
\end{EA}
If in \eqref{eq:lde_thm_2} and \eqref{eq:lde_thm_3} we take $\xi=(\zeta\log \log N)/\phi$ and $Q_{\zeta}=\zeta/\phi$ , then we have that with $\zeta$-high probability
\begin{equation*}
  |(1-\EE_{\yb_{i(a)}})\left[\sum_{k,l=1}^{nN} \xo{x}_{ki(a)} \Gc^{(\TT,\UU)}_{kl} x_{li(a)}  \right]|
  \leq
  \frac{\sqrt{2}\varphi^{Q_{\zeta}}}{N}\left(\sum_{k,l=1}^{N}\left|\LRI{\Gc}{(\TT,\UU)}{a-1}{kl}\right|^2\right)^{1/2}
  \leq
  \sqrt{2}\sqrt{n}\varphi^{Q_{\zeta}}\left(\frac{\sum_{k,l=1}^{nN}\left|\LRI{\Gc}{(\TT,\UU)}{}{kl}\right|^2}{(nN)N}\right)^{1/2}
  .
\end{equation*}
By \eqref{eq:im_of_resolvent_diag_and_product} we have
\begin{equation*}
  \sum_{k,l=1}^{nN}\left|\LRI{\Gc}{(\TT,\UU)}{}{kl}\right|^2
  =
  \frac{1}{\eta}\sum_{k=1}^{nN} \Im \LRI{\Gc}{(\TT,\UU)}{}{kl}
  ,
\end{equation*}
and by \eqref{eq:bord_guion_minor_estimate}
\begin{equation*}
  \frac{1}{nN}\sum_{k=1}^{nN} \Im \LRI{\Gc}{(\TT,\UU)}{}{kl}-\Im m_{\Gc}
  \leq
  \frac{4(|\TT|+|\UU|)}{N\eta}.
\end{equation*}
Therefore
\begin{EA}{ll}
  |(1-\EE_{\yb_{i(a)}})\left[\sum_{k,l=1}^{nN} \xo{x}_{ki(a)} \Gc^{(\TT,\UU)}_{kl} x_{li(a)}  \right]|
  &\leq
  \sqrt{2}\sqrt{n}\varphi^{Q_{\zeta}}\left(\frac{\Im m_{\Gc}}{N\eta}+\frac{4(|\TT|+|\UU|)}{(N\eta)^2}\right)^{1/2}
  \\
  &\leq
  4\sqrt{n}\varphi^{Q_{\zeta}}\left(\sqrt{\frac{\Im m_c +|m_{\Gc}-m_c|}{N\eta}}+\frac{|\TT|+|\UU|}{N\eta}\right)
  ,
\end{EA}
and we conclude by recalling that on $S_0$ $\Im m_c \sim \eta$.

With a similar argument we can show \eqref{eq:lde_1} when $i\neq j$.

To get the estimate \eqref{eq:lde_2}, we use \eqref{eq:lde_thm_1} together with \eqref{eq:im_of_resolvent_diag_and_product} to obtain that, for example, with $\zeta$-high probability
\begin{equation*}
  |(1-\EE_{\yb_{i(a)}})\left[\sum_{k}^{nN} \xo{x}_{ki(a)} \Gc^{(\TT,\UU)}_{ki(a)}  \right]|
  \leq
  \frac{\varphi^{Q_{\zeta}}}{\sqrt{N}}\sqrt{\sum_{k=1}^{N} |\LRI{\Gc}{(\TT,\UU)}{}{k(a-1)i(a)}|^2}
  \leq
  C\varphi^{Q_{\zeta}}\sqrt{\frac{\Im \LRI{\Gc}{(\TT,\UU)}{a}{ii}}{N\eta}}.
\end{equation*}

\end{proof}
From now on we fix $\alpha$ as in Lemma~\ref{lem:prop_of_mc_close_functions} and $Q_{\zeta}$ as in Lemma~\ref{lem:lde}.

To state and prove our next result we shall need some additional notation.

For $a\in\ZZ/n\ZZ$, $i\in \llbracket 1,N\rrbracket$ and $\TT\subset\llbracket 1,nN\rrbracket$ we define
\begin{equation*}
  \LRI{Z}{(\TT)}{a}{i}
  :=
  \Mes{y_{i(a)}}{y_{i(a)} G^{(\TT,\emptyset)} y_{i(a)}^*}
  ,\quad
  \LRI{\Zc}{(\TT)}{a}{i}
  :=
  \Mes{\yb_{i(a)}}{\yb_{i(a)}^* \Gc^{(\emptyset,\TT)} \yb_{i(a)}}
  ,
\end{equation*}
and we shall suppress the right superscript if $\TT=\emptyset$.

For any $t>0$ define an $N$-dependent set
\begin{equation*}
  S_t
  :=
  \{w=E+\sqrt{-1}\eta\, | \, 0\leq E\leq \lambda_-(z)/2, \frac{\varphi^{t}}{N}\leq \eta\leq1\}
  =
  S_0\cap \left\{\eta\geq \frac{\varphi^t}{N}\right\}
  .
\end{equation*}

We are now in position to prove the following theorem.

\begin{thm}\label{thm:sce}
  For any $\zeta>0$ there exists $\tilde{Q}_{\zeta}>0$ such that the following implication is true  $\forall (z,w)\in S_{\tilde{Q}_{\zeta}}$:
\newline
if
  \begin{equation}\label{eq:alpha_condition}
    \Lambda(z,w)
    \leq
    \alpha 
  \end{equation}
holds with $\zeta$-high probability, then
  \begin{EA}{c}\EAy\label{eq:sce_1}
    \LRI{G}{(\emptyset,i(a))}{a}{ii}
    =
    \left[-w(1+\LRI{m}{}{a-1}{\Gc})\right]^{-1} + \OO{\varphi^{Q_{\zeta}}\frac{\Psi}{|w|}}
    ,\quad
    a\in\ZZ/n\ZZ,\,1\leq i\leq N
    ,\\\EAy\label{eq:sce_2}
    \LRI{\Gc}{(i(a),\emptyset)}{a}{ii}
    =
    \left[-w(1+\LRI{m}{}{a+1}{G})\right]^{-1} + \OO{\varphi^{Q_{\zeta}}\frac{\Psi}{|w|}}
    ,\quad
    a\in\ZZ/n\ZZ,\,1\leq i\leq N
    ,\\\EAy\label{eq:sce_3}
    \LRI{G}{}{a}{ii}
    =
    \left[-w(1+\LRI{m}{}{a-1}{\Gc})+\frac{|z|^2}{1+\LRI{m}{}{a+1}{G}}\right]^{-1} + \OO{\varphi^{2Q_{\zeta}}\Psi}
    ,\quad
    a\in\ZZ/n\ZZ,\,1\leq i\leq N
    ,\\\EAy\label{eq:sce_4}
    \LRI{\Gc}{}{a}{ii}
    =
    \left[-w(1+\LRI{m}{}{a+1}{G})+\frac{|z|^2}{1+\LRI{m}{}{a-1}{\Gc}}\right]^{-1} + \OO{\varphi^{2Q_{\zeta}}\Psi}
    ,\quad
    a\in\ZZ/n\ZZ,\,1\leq i\leq N
    ,\\\EAy\label{eq:sce_mg}
    \frac{1}{\LRI{m}{}{a}{G}}+w(1+\LRI{m}{}{a-1}{\Gc})-\frac{|z|^2}{1+\LRI{m}{}{a+1}{G}} 
    =
    \OO{\varphi^{2Q_{\zeta}}\Psi}
    ,\quad 
    a\in\ZZ/n\ZZ
    ,\\\EAy\label{eq:sce_mgc}
    \frac{1}{\LRI{m}{}{a}{\Gc}}+w(1+\LRI{m}{}{a+1}{G})-\frac{|z|^2}{1+\LRI{m}{}{a-1}{\Gc}} 
    =
    \OO{\varphi^{2Q_{\zeta}}\Psi}
    ,\quad
    a\in\ZZ/n\ZZ
    ,
  \end{EA}
hold with $\zeta$-high probability.
\end{thm}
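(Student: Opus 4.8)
The plan is to derive the six estimates in the order \eqref{eq:sce_1}--\eqref{eq:sce_2}, then \eqref{eq:sce_3}--\eqref{eq:sce_4}, then \eqref{eq:sce_mg}--\eqref{eq:sce_mgc}, exactly as they are listed, since each group feeds into the next. The starting point is the Schur complement identities of Lemma~\ref{lem:sce_schur}. Take $\TT=\UU=\emptyset$, $i=j$ in \eqref{eq:sce_schur_1}; this gives
\begin{equation*}
  \frac{1}{\LRI{G}{}{a}{ii}}
  =
  -w\left(1+\yb_{i(a)}^*\Gc^{(i(a),\emptyset)}\yb_{i(a)}\right),
\end{equation*}
and the analogous identity with $G^{(\emptyset,i(a))}$ replaces $\Gc^{(i(a),\emptyset)}$ by $\Gc$ itself. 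The key point is that the quadratic form $\yb_{i(a)}^*\Gc^{(\cdot)}\yb_{i(a)}$ splits, because of the block structure of $X$, into a sum running only over the $(a-1)$-block: writing out $\yb_{i(a)}$ one sees that $\E_{\yb_{i(a)}}[\yb_{i(a)}^*\Gc^{(\cdot)}\yb_{i(a)}] = \LRI{m}{(\cdot)}{a-1}{\Gc} + (\text{a }z\text{-dependent correction})$, and the fluctuation is controlled by Lemma~\ref{lem:lde}, equation \eqref{eq:lde_1}, which bounds it by $\OO{\varphi^{Q_\zeta}(\Psi + (|\TT|+|\UU|)/(N\eta))}$ with $\zeta$-high probability. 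Under the a priori bound $\Lambda\leq\alpha$ from \eqref{eq:alpha_condition}, Lemma~\ref{lem:prop_of_mc_close_functions}(i) gives $|1+\LRI{m}{}{a-1}{\Gc}|\sim 1$ and $|1+\yb^*\Gc^{(\cdot)}\yb|\sim 1$, so one may invert: $\LRI{G}{(\emptyset,i(a))}{a}{ii} = [-w(1+\LRI{m}{}{a-1}{\Gc})]^{-1} + \OO{\varphi^{Q_\zeta}\Psi/|w|}$. Here one must also replace $\LRI{m}{(\emptyset,i(a))}{a-1}{\Gc}$ by $\LRI{m}{}{a-1}{\Gc}$ using Lemma~\ref{lem:bord_guion_minor_estimate}, whose error $4/(N\eta)$ is absorbed into $\Psi$. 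This proves \eqref{eq:sce_1}, and \eqref{eq:sce_2} follows by the symmetric argument using the second line of \eqref{eq:sce_schur_2}.

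Next, for \eqref{eq:sce_3}--\eqref{eq:sce_4}, I would use the full Schur identity without removing a row or column, i.e. \eqref{eq:sce_schur_1} with $\TT=\UU=\emptyset$ combined with the resolvent identity $Y_z^{(\cdot)}G^{(\cdot)}Y_z^{(\cdot)*} = I + w\Gc^{(\cdot)}$ from the second Linear Algebra lemma, to see that $1/\LRI{G}{}{a}{ii}$ equals $-w(1+\LRI{m}{}{a-1}{\Gc})$ plus a term coming from the $z$-shift, which after the same decoupling/large-deviation bookkeeping produces the $+|z|^2/(1+\LRI{m}{}{a+1}{G})$ term. Concretely, expanding $\yb_{i(a)}^*\Gc^{(i(a),\emptyset)}\yb_{i(a)}$ one isolates the diagonal contribution that carries the $|z|^2$ factor; using \eqref{eq:sce_1}, \eqref{eq:sce_2} to replace $\LRI{\Gc}{(i(a),\emptyset)}{a}{ii}$-type quantities and Lemma~\ref{lem:prop_of_mc_close_functions}(i) to keep all denominators of size $\sim 1$, one inverts again. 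The error is now $\OO{\varphi^{2Q_\zeta}\Psi}$: the square of $\varphi^{Q_\zeta}$ appears because one substitutes an already-approximate expression (with error $\varphi^{Q_\zeta}\Psi/|w|$) into another Schur expansion, and the factor $|w|^{-1}$ is killed against the $|w|$ from $-w(1+\cdot)$. Then \eqref{eq:sce_mg} is just \eqref{eq:sce_3} rewritten: take reciprocals of both sides of \eqref{eq:sce_3}, use that the right-hand side of \eqref{eq:sce_3} is $\sim 1$ by Lemma~\ref{lem:prop_of_mc_close_functions}(iii) (which guarantees $-w(1+\LRI{m}{}{a-1}{\Gc})+|z|^2/(1+\LRI{m}{}{a+1}{G})$ stays comparable to $1/m_c\sim 1$), and rearrange; the error stays $\OO{\varphi^{2Q_\zeta}\Psi}$. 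Equations \eqref{eq:sce_4} and \eqref{eq:sce_mgc} follow by the $\Gc$-versions of the same steps, swapping $a-1\leftrightarrow a+1$ and $G\leftrightarrow\Gc$.

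Two bookkeeping points deserve care. First, the large-deviation estimates of Lemma~\ref{lem:lde} come with error $\varphi^{Q_\zeta}(\Psi + (|\TT|+|\UU|)/(N\eta))$, but here $|\TT|+|\UU|\leq 2$ is bounded, so that extra term is $\OO{\varphi^{Q_\zeta}/(N\eta)}\subset\OO{\varphi^{Q_\zeta}\Psi}$ and disappears into $\Psi$. Second, to even apply Lemma~\ref{lem:lde} one needs $w\in S_0$ and enough room for the subexponential-decay concentration to beat the polynomial prefactors; this is why the statement restricts to $w\in S_{\tilde Q_\zeta}$ for a suitable $\tilde Q_\zeta$, i.e. $\eta\geq \varphi^{\tilde Q_\zeta}/N$ — on this set $N\eta\geq\varphi^{\tilde Q_\zeta}$ is large, so $\Psi<1$ and all the ``$\sim 1$'' inversions via Lemma~\ref{lem:prop_of_mc_close_functions} are legitimate. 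One picks $\tilde Q_\zeta$ large enough (depending on $Q_\zeta$ and on the constant $C_0$ implicit in the decoupling constants) that every error term encountered is genuinely $o(1)$, so the a priori hypothesis $\Lambda\leq\alpha$ is self-consistently maintainable.

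The main obstacle is the precise tracking of how the block structure of $X$ feeds the index shift $a\mapsto a\pm 1$ through the quadratic forms, and making sure that after removing a row/column $i(a)$ the partial trace $\LRI{m}{(i(a),\emptyset)}{a-1}{\Gc}$ that actually appears is close to the full $\LRI{m}{}{a-1}{\Gc}$ with an error absorbable into $\Psi$ (this is where Lemma~\ref{lem:bord_guion_minor_estimate} is essential, since naive minor-trace comparisons would cost $1/(N\eta)$ per removed index and one must check only boundedly many indices are removed). The second delicate point is the propagation of errors through the two successive Schur inversions that yields the $\varphi^{2Q_\zeta}$ rather than $\varphi^{Q_\zeta}$ in \eqref{eq:sce_3}--\eqref{eq:sce_mgc}: one has to verify that no hidden factor of $1/|w|$ or $1/\eta$ survives, which relies on the algebra of Lemma~\ref{lem:prop_of_mc_close_functions}(ii)--(iii) to cancel the dangerous $|w|^{-1}$ and keep $\Im$-parts of size $\sim\eta$. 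Everything else is the routine Schur-complement-plus-large-deviations machinery of \cite{BourYauYin}, adapted to the sparse matrix $X$.
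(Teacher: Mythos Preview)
Your proposal is correct and follows essentially the same route as the paper: Schur complement identities from Lemma~\ref{lem:sce_schur}, block-structure decomposition of the quadratic forms, large-deviation control via Lemma~\ref{lem:lde}, minor-trace replacement via Lemma~\ref{lem:bord_guion_minor_estimate}, and inversion via Lemma~\ref{lem:prop_of_mc_close_functions}, with $\tilde Q_\zeta$ chosen large (the paper takes $\tilde Q_\zeta>6Q_\zeta$) so that $\varphi^{2Q_\zeta}\Psi\ll 1$ on $S_{\tilde Q_\zeta}$. Two small slips worth fixing: in the identity for $G^{(\emptyset,i(a))}_{i(a)i(a)}$ the Schur formula gives $\Gc^{(i(a),i(a))}$, not $\Gc$ itself (the point being that the $|z|^2\,\Gc^{(\cdot)}_{i(a)i(a)}$ term then vanishes by the convention that removed rows/columns are zero); and for \eqref{eq:sce_mg} one first \emph{averages} \eqref{eq:sce_3} over $i$ to get $\LRI{m}{}{a}{G}$ and only then inverts via Lemma~\ref{lem:prop_of_mc_close_functions}(iii), rather than taking reciprocals entrywise.
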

\begin{proof}
We begin with equation \eqref{eq:sce_2}.
Using \eqref{eq:sce_schur_2} and taking the expectation with respect to $\yb_{i(a)}$
\begin{EA}{ll}
    \LRI{\Gc}{(i(a),\emptyset)}{a}{ii}
    &=
    \frac{1}{-w\left(1+\LRI{m}{(i(a),i(a))}{a+1}{G}+\LRI{Z}{(i(a))}{a}{i}\right)}
    \\
    &=
    \frac{1}{-w\left(1+\LRI{m}{}{a+1}{G}+(\LRI{m}{(i(a),i(a))}{a+1}{G}-\LRI{m}{}{a+1}{G})+\LRI{Z}{(i(a))}{a}{i}\right)}
    \\
    &=
    \frac{1}{-w\left(1+\LRI{m}{}{a+1}{G}\right)}+\frac{(\LRI{m}{(i(a),i(a))}{a+1}{G}-\LRI{m}{}{a+1}{G})+\LRI{Z}{(i(a))}{a}{i}}{w(1+\LRI{m}{}{a+1}{G})(1+\LRI{m}{(i(a),i(a))}{a+1}{G}+\LRI{Z}{(i(a))}{a}{i})}
    .
\end{EA}
The $i(a)$th row and column of $\LRI{G}{(i(a),i(a))}{}{}$ are equal to zero by definition. Therefore
\begin{equation*}
  y_{i(a)} \LRI{G}{(i(a),i(a))}{}{} y_{i(a)}^*
  =
  \sum_{k,l=1}^{N} x_{i(a)k(a+1)} \LRI{G}{(i(a),i(a))}{a+1}{kl}\xo{x}_{i(a)l(a+1}
\end{equation*}
and from \eqref{eq:lde_1} we have that $\left|\LRI{Z}{(i(a))}{a}{i}\right|=\OO{\varphi^{Q_{\zeta}}\Psi}$ for some $Q_{\zeta}>0$.

Suppose that $\tilde{Q}_{\zeta}>6 Q_{\zeta}$.
Then
\begin{equation*}
  \varphi^{2Q_{\zeta}}\Psi
  \leq
  \varphi^{2Q_{\zeta}}\left(\frac{1}{\sqrt{N}}+\frac{\sqrt{\alpha}}{\varphi^{6 Q_{\zeta}/2}}+\frac{1}{\varphi^{6 Q_{\zeta}}}\right)
  \leq
  \varphi^{-Q_{\zeta}}.
\end{equation*}
Recall that by \eqref{eq:bord_guion_minor_estimate}
\begin{equation*}
  |\LRI{m}{(i(a),i(a))}{a+1}{G}-\LRI{m}{}{a+1}{G}|
  \leq
  \frac{8}{N\eta}.
\end{equation*}
If $N$ is big enough, then
\begin{equation*}
  |\LRI{m}{(i(a),i(a))}{a+1}{G}-m_c|
  \leq
  2\alpha
\end{equation*}
and from \eqref{eq:prop_of_mc_close_functions_1} we get \eqref{eq:sce_2}.

We now apply \eqref{eq:sce_schur_2} to $\left[\LRI{G}{}{a}{ii}\right]^{-1}$, take expectation with respect to the column $\yb_{i(a)}$ and use \eqref{eq:sce_2}
\begin{EA}{ll}
    \frac{1}{\LRI{G}{}{a}{ii}}
    &=
    -w\left(1+\LRI{m}{}{a-1}{\Gc}+\LRI{\Zc}{}{a}{i} +|z|^2\LRI{\Gc}{(i(a),\emptyset)}{a}{ii}\right)
    \\ \EAy \label{eq:sce_5}
    &=
    -w(1+\LRI{m}{}{a-1}{\Gc})-w\LRI{\Zc}{}{a}{i} +\frac{|z|^2}{1+\LRI{m}{}{a+1}{G}} +\OO{\varphi^{Q_{\zeta}}\Psi}
    .
\end{EA}
We estimate $\LRI{\Zc}{}{a}{i}$ using Lemma~\ref{lem:lde} and \eqref{eq:sce_2} as
\begin{EA}{ll}
  \LRI{\Zc}{}{a}{i}
  &=
  (1-\EE_{\yb_{i(a)}})\left[\sum_{k,l=1}^{N} \xo{x}_{k(a-1)i(a)}\LRI{\Gc}{(i(a),\emptyset)}{a-1}{kl} x_{l(a-1)i(a)} 
    -\xo{z}\sum_{l=1}^{N} \LRI{\Gc}{(i(a),\emptyset)}{}{i(a)l(a-1)} x_{l(a-1)i(a)}
    -z\sum_{k=1}^{N} \xo{x}_{k(a-1)i(a)}\LRI{\Gc}{(i(a),\emptyset)}{}{k(a-1)i(a)} 
  \right]
  \\
  &=
  \OO{\varphi^{Q_{\zeta}}\Psi}+\OO{\varphi^{Q_{\zeta}}\sqrt{\frac{\Im \LRI{\Gc}{(i(a),\emptyset)}{a}{ii}}{N\eta}}}
  \\
  &=
  \OO{\varphi^{Q_{\zeta}}\Psi}+\OO{\varphi^{Q_{\zeta}}\sqrt{\frac{1}{N\eta}\left(\Im \frac{1}{w(1+\LRI{m}{}{a+1}{G})}+\OO{\varphi^{Q_{\zeta}}\frac{\Psi}{|w|}}\right)}}
  .
\end{EA}
Then by \eqref{eq:prop_of_mc_close_functions_2}
\begin{equation*}
  \sqrt{\frac{1}{N\eta}\left(\Im \frac{1}{w(1+\LRI{m}{}{a+1}{G})}\right)}
  =
  \OO{\sqrt{\frac{\eta}{|w|^2 N\eta}}+\sqrt{\frac{\Lambda}{|w|N\eta}}}.
\end{equation*}
We conclude that
  \begin{equation*}
    w\LRI{\Zc}{}{a}{i}
    =
    \OO{\varphi^{2Q_{\zeta}}\Psi}
  \end{equation*}
and thus 
  \begin{equation*}
    \frac{1}{\LRI{G}{}{a}{ii}}
    =
    -w(1+\LRI{m}{}{a-1}{\Gc})+\frac{|z|^2}{1+\LRI{m}{}{a+1}{G}}+\OO{\varphi^{2Q_{\zeta}}\Psi}.
  \end{equation*}
Now by \eqref{eq:prop_of_mc_close_functions_3}
\begin{equation*}
\left[-w(1+\LRI{m}{}{a-1}{\Gc})+\frac{|z|^2}{1+\LRI{m}{}{a+1}{G}}+\OO{\varphi^{2Q_{\zeta}}\Psi}\right]^{-1}
=
\left[-w(1+\LRI{m}{}{a-1}{\Gc})+\frac{|z|^2}{1+\LRI{m}{}{a+1}{G}}\right]^{-1}+\OO{\varphi^{2Q_{\zeta}}\Psi}
\end{equation*}
and the equation \eqref{eq:sce_3} is proven.

If we sum the left- and right-hand sides of \eqref{eq:sce_3} over $i\in\{1,\ldots,N\}$ and divide by $N$, we get
\begin{equation*}
  \LRI{m}{}{a}{G}
  =
  \left[-w(1+\LRI{m}{}{a-1}{\Gc}+\frac{|z|^2}{1+\LRI{m}{}{a+1}{G}})\right]^{-1}+\OO{\varphi^{2Q_{\zeta}}\Psi}
  .
\end{equation*}
Using again \eqref{eq:prop_of_mc_close_functions_3} we have
  \begin{equation*}
    \frac{1}{\LRI{m}{}{a}{G}}
    =
    -w(1+\LRI{m}{}{a-1}{\Gc}+\frac{|z|^2}{1+\LRI{m}{}{a+1}{G}})+\OO{\varphi^{2Q_{\zeta}}\Psi}
    .
  \end{equation*}

Equations \eqref{eq:sce_1}, \eqref{eq:sce_4} and \eqref{eq:sce_mgc} can be proven in the same way.
Theorem~\ref{thm:sce} is established.
\end{proof}

For the rest of the article let $\alpha$, $Q_{\zeta}$ and $\tilde{Q}_{\zeta}$ be defined as in Theorem~\ref{thm:sce}.
\subsection{Weak concentration}
\label{sec:weak_concentration}
Our goal in this section is to show Theorem~\ref{thm:continuity_argument} and its corollary.
The proof relies on Theorem~\ref{thm:sce} as well as Lemmas~\ref{pr:range_separation} and \ref{pr:large_eta_estimate} below.

Suppose that condition \eqref{eq:alpha_condition} holds i.e., for all $a\in\ZZ/n\ZZ$
\begin{equation*}
  |\LRI{m}{}{a}{G}-m_c|
  \leq
  \alpha
  ,\quad
  |\LRI{m}{}{a}{\Gc}-m_c|
  \leq
  \alpha.
\end{equation*}
Then, we can expand the summands on the left-hand sides of \eqref{eq:sce_mg} and \eqref{eq:sce_mgc} around $m_c$.
For example,
\begin{equation*}
  \frac{1}{\LRI{m}{}{a}{G}}
  =
  \frac{1}{m_c+(\LRI{m}{}{a}{G}-m_c)}
  =
  \frac{1}{m_c}-\frac{\LRI{m}{}{a}{G}-m_c}{m_c^2}+\OO{|\LRI{m}{}{a}{G}-m_c|^2}.
\end{equation*}
As a result, we obtain a system of approximate linear equations with respect to $\Delta_a:=(\LRI{m}{}{a}{G}-m_c)$ and $\Delta'_a:=(\LRI{m}{}{a}{\Gc}-m_c)$

\begin{EA}{c}
  \frac{1}{m_c} -\frac{\Delta_a}{m_c^2}+w(1+m_c) +w\Delta'_{a-1}-\frac{|z|^2}{(1+m_c)}+\frac{|z|^2}{(1+m_c)^2}\Delta_{a+1}+\OO{\Lambda^2} 
    =
    \OO{\varphi^{2Q_{\zeta}}\Psi}
    ,\\
  \frac{1}{m_c} -\frac{\Delta'_a}{m_c^2}+w(1+m_c) +w\Delta_{a+1}-\frac{|z|^2}{(1+m_c)}+\frac{|z|^2}{(1+m_c)^2}\Delta'_{a-1}+\OO{\Lambda^2} 
    =
    \OO{\varphi^{2Q_{\zeta}}\Psi}
    .
\end{EA}
Recall, that $m_c$ satisfies the self-consistent equation \eqref{eq:mc_equation}. We end up with the following linear system 

\begin{EA}{c}\EAy\label{eq:lin_sys_1}
    -\frac{\Delta_a}{m_c^2}+w\Delta'_{a-1}+\frac{|z|^2}{(1+m_c)^2}\Delta_{a+1}
    =
    \OO{\varphi^{2Q_{\zeta}}\Psi}+\OO{\Lambda^2} 
    ,\\ \EAy \label{eq:lin_sys_2}
    -\frac{\Delta'_a}{m_c^2}+w\Delta_{a+1}+\frac{|z|^2}{(1+m_c)^2}\Delta'_{a-1}
    =
    \OO{\varphi^{2Q_{\zeta}}\Psi}+\OO{\Lambda^2} 
    .
\end{EA}

We introduce the following notation:
\begin{equation*}
  \Delta
  :=
  (\Delta_1,\ldots,\Delta_n,\Delta'_1,\ldots,\Delta'_n)^T
  ,
\end{equation*}
and
\begin{equation*}
  \Gamma_1
  :=
  \begin{pmatrix}
      0 & 0 & \cdots& 0 & \gamma_2 & -\gamma_1
      \\
      -\gamma_1 & 0 & 0 & \cdots & 0 & \gamma_2
      \\
      \gamma_2 & -\gamma_1 & 0 & \cdots & 0 & 0
      \\
      0 & \gamma_2 & -\gamma_1 & 0 & \ddots & 0
      \\
        & \ddots &  & \ddots &  & 
      \\
      0 & \ldots & 0 & \gamma_2 & -\gamma_1 & 0  
    \end{pmatrix}
    ,\quad
  \Gamma
  :=
  \begin{pmatrix}
  wI_n&\Gamma_1
  \\
  \Gamma_1^T & wI_n
  \end{pmatrix}
  ,
\end{equation*}
where
\begin{equation*}
  \gamma_1
  :=
  \frac{1}{m_c^2}
  ,\quad
  \gamma_2
  :=
  \frac{|z|^2}{(1+m_c)^2}
  .
\end{equation*}

Thus we can rewrite the system \eqref{eq:lin_sys_1}-\eqref{eq:lin_sys_2} as
\begin{equation}\label{eq:lin_sys_3}
  \Gamma \Delta
  =
  \OO{\varphi^{2Q_{\zeta}}\Psi}+\OO{\Lambda^2} 
  .
\end{equation}

\begin{lem}\label{pr:gamma_norm_bound}
  There exists $\tau>0$ such that $\forall \zeta>0$
  \begin{equation*}
    \sup_{1+\delta\leq |z|\leq 6}\sup_{w\in \tilde{S}_{\tilde{Q}_{\zeta}} } \|\Gamma^{-1}\|
    \leq
    \tau^{-1},
  \end{equation*}
where $\tilde{S}_{\tilde{Q}_{\zeta}}:=S_{\tilde{Q}_{\zeta}}\cap\{|w|\leq \tau\}$.
\end{lem}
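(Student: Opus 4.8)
The plan is to bound $\|\Gamma^{-1}\|$ uniformly by showing that $\det\Gamma$ is bounded away from zero while the entries of $\Gamma$ stay bounded, so that the entries of $\Gamma^{-1}$ (cofactors over determinant) are controlled. First I would record the coarse bounds coming from Lemma~\ref{lem:mc_properties}(2) and Lemma~\ref{lem:prop_of_mc_close_functions}: on $S_0$ (hence on $\tilde S_{\tilde Q_\zeta}$) one has $|m_c|\sim 1$, $|1+m_c|\sim 1$, $\Re m_c\geq 0$, $\Im m_c\sim\eta$; consequently $|\gamma_1|\sim 1$ and $|\gamma_2|\sim|z|^2\sim 1$ since $1+\delta\le|z|\le 6$. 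The matrix $\Gamma$ has all entries $O(1)$, so it suffices to produce a lower bound $|\det\Gamma|\geq\tau$ for some $\tau>0$ independent of $N$, $z$, $w$ (after possibly shrinking the disc $|w|\le\tau$).

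The key computation is an explicit formula for $\det\Gamma$. Because $\Gamma_1$ is (up to the corner entries realising the cyclic structure $\ZZ/n\ZZ$) a circulant-type band matrix with symbols $-\gamma_1$ on the sub-diagonal and $\gamma_2$ on the super-super-diagonal — in fact $\Gamma_1$ is a circulant in the basis of $n$th roots of unity — I would diagonalise the whole $2n\times 2n$ block matrix $\Gamma=\begin{pmatrix} wI_n & \Gamma_1\\ \Gamma_1^T & wI_n\end{pmatrix}$ simultaneously by the discrete Fourier transform on each $\ZZ/n\ZZ$ block. Since $\Gamma_1$ and $\Gamma_1^T$ are both circulants, they are simultaneously diagonalised, with eigenvalues $\mu_k = \gamma_2\omega^{k}-\gamma_1\omega^{-k}$ (for $\omega=e^{2\pi\sqrt{-1}/n}$, $k=0,\dots,n-1$) and $\bar\mu_k$ respectively in a suitable pairing. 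This reduces $\det\Gamma$ to a product of $2\times 2$ determinants $\det\begin{pmatrix} w & \mu_k\\ \mu_k' & w\end{pmatrix}=w^2-\mu_k\mu_k'$, so that
\begin{equation*}
  \det\Gamma
  =
  \prod_{k=0}^{n-1}\bigl(w^2-\mu_k\mu_k'\bigr),
\end{equation*}
where $\mu_k\mu_k'$ is a fixed rational expression in $\gamma_1,\gamma_2$ and the $n$th roots of unity. I would then check that as $w\to 0$ (i.e.\ $E,\eta\to 0$), $\mu_k\mu_k'$ does not approach $0$: one has $\gamma_1\gamma_2 = |z|^2/(m_c^2(1+m_c)^2)$ and powers thereof, all of modulus $\sim|z|^2\sim 1$, so $|\mu_k\mu_k'|$ is bounded below, uniformly, using $|z|\geq 1+\delta$; hence for $|w|$ small enough $|w^2-\mu_k\mu_k'|\geq\tfrac12|\mu_k\mu_k'|\geq\tau'>0$, giving $|\det\Gamma|\geq(\tau')^n=:\tau$. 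Combining with the $O(1)$ bound on the cofactors yields $\|\Gamma^{-1}\|\leq\tau^{-1}$ after relabelling $\tau$.

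The main obstacle I anticipate is handling the cyclic corner entries of $\Gamma_1$ correctly so that the Fourier diagonalisation genuinely applies — that is, verifying that $\Gamma_1$ is exactly a (scalar) circulant associated to the shift on $\ZZ/n\ZZ$ with symbol $p(\omega)=\gamma_2\omega-\gamma_1\bar\omega$ (equivalently $\gamma_2\omega-\gamma_1\omega^{n-1}$), rather than merely banded, and pinning down the correct pairing between the eigenvalues of $\Gamma_1$ and of $\Gamma_1^T=\Gamma_1^*$ so the block determinant collapses as claimed. A secondary technical point is to make the lower bound on $|\mu_k\mu_k'|$ uniform in $k$, in $z$ on the annulus, and in $m_c=m_c(z,w)$ over $w\in S_0$; for this I would isolate the worst case (the root of unity that could make $\gamma_2\omega-\gamma_1\bar\omega$ small) and use the equivalences $|\gamma_1|\sim|\gamma_2|\sim 1$ together with a compactness/continuity argument in $(z,w)$ on the compact parameter region, invoking that $m_c$ is continuous and bounded there. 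Once the determinant is bounded below, the rest is the routine cofactor estimate, so I would not belabour it.
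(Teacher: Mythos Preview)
Your overall strategy coincides with the paper's: reduce to a lower bound on $|\det\Gamma|$ via the block formula $\det\Gamma=\det(w^2I_n-\Gamma_1^T\Gamma_1)$, exploit the circulant structure to get the eigenvalues explicitly, and then take $|w|$ small. Your separate Fourier diagonalisation of $\Gamma_1$ and $\Gamma_1^T$ is a harmless variant that produces the same products $\mu_j\mu_j'=\gamma_1^2+\gamma_2^2-2\gamma_1\gamma_2\cos(2\pi j/n)$, which are exactly the eigenvalues of $\Gamma_1^T\Gamma_1$.

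There is, however, a genuine gap in your justification of the lower bound on $|\mu_k\mu_k'|$. You argue that it follows from $|\gamma_1\gamma_2|\sim|z|^2\sim 1$, but that is not the relevant quantity: for $k=0$ one has $\mu_0\mu_0'=(\gamma_1-\gamma_2)^2$, and the equivalences $|\gamma_1|\sim|\gamma_2|\sim 1$ alone do nothing to keep this away from zero. What is actually needed is the algebraic relation between $\gamma_1$ and $\gamma_2$ coming from the self-consistent equation \eqref{eq:mc_equation}. At $w=0$ it reads $m_c^{-1}=|z|^2(1+m_c)^{-1}$, hence $(1+m_c)/m_c=|z|^2$ and therefore $\gamma_1/\gamma_2=|z|^2$; this is precisely where the hypothesis $|z|\geq 1+\delta$ enters, forcing $\gamma_1-\gamma_2=\gamma_2(|z|^2-1)$ to be uniformly bounded away from zero. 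The paper carries this out for all $j$ simultaneously by writing $\gamma_1=\frac{1+m_c}{m_c}\gamma_2+O(|w|)$ and reducing each eigenvalue to
\[
-\gamma_2^2\,\frac{1+m_c}{m_c}\left(\frac{|z|^2-1}{1+m_c}+2-2\cos\tfrac{2\pi j}{n}\right)+O(|w|),
\]
whose bracket has real part at least $(|z|^2-1)\frac{1+\Re m_c}{|1+m_c|^2}>0$ since $\Re m_c\geq 0$ on $S_0$. Your proposed compactness argument could in principle be completed, but only after supplying this input from \eqref{eq:mc_equation} at $w=0$; without it, nothing in your sketch rules out $\gamma_1=\gamma_2$.
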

\begin{proof} 
First of all note that from \eqref{eq:mc_equivalences} we have that
  \begin{equation*}
    \gamma_1
    \sim
    \gamma_2
    \sim
    1.
  \end{equation*}
Thus
  \begin{equation*}
    \|\Gamma^{-1}\|
    =
    \OO{\frac{1}{|\det \Gamma|}}
  \end{equation*}
and we need to estimate $|\det \Gamma|$ from below.
From the formula for block matrices
  \begin{equation*}
    \det \Gamma
    =
    \det (w^2 I_n-\Gamma_1^T\Gamma_1).
  \end{equation*}
The matrix $w^2 I_n-\Gamma_1^T\Gamma_1$ is a circulant matrix, so we have simple formulas for its eigenvalues.
More precisely (see \cite[formula~2.2.9]{HornJohn}), if we have a circulant matrix with coefficients $c_0,c_1,\ldots,c_{n-1}$, then the eigenvalues are
  \begin{equation*}
    l_j(\mathrm{Circulant}(c_0,c_1,\ldots,c_{n-1}))
    =
    \sum_{k=0}^{n-1} c_k e^{2\pi\sqrt{-1}jk/n}
    ,\quad
    j=0,\ldots,n-1
    .
  \end{equation*}
In our case
  \begin{equation*}
    w^2 I_n-\Gamma_1^T\Gamma_1
    =
    \mathrm{Circulant}(w^2-\gamma_1^2-\gamma_2^2,\gamma_1\gamma_2,0,0,\ldots,0,\gamma_1\gamma_2)
    ,
  \end{equation*}
therefore,
  \begin{EA}{rcl}
    l_j(w^2 I_n-\Gamma_1^T\Gamma_1)
    &=&
    w^2-\gamma_1^2-\gamma_2^2 + \gamma_1\gamma_2 e^{2\pi\sqrt{-1}j/n}+\gamma_1\gamma_2 e^{2\pi\sqrt{-1}j(n-1)/n}
    \\
    &=&
    w^2-\gamma_1^2-\gamma_2^2 + 2\gamma_1\gamma_2 \Re e^{2\pi\sqrt{-1}j/n}
    .
  \end{EA}
From \eqref{eq:mc_equation}
 \begin{equation*}
    \gamma_1
    =
    \frac{1+m_c}{m_c}\gamma_2+\OO{|w|}
    ,
  \end{equation*}
so that
  \begin{EA}{rcl}
    l_j
    &=&
    -\gamma_2^2\left(\frac{1+m_c}{m_c}\right)^2-\gamma_2^2 +2\gamma_2^2\frac{1+m_c}{m_c}\Re e^{2\pi\sqrt{-1}j/n}+\OO{|w|}
    \\
    &=&
    -\gamma_2^2\left(\frac{1+m_c}{m_c}\right)\left( \frac{1+m_c}{m_c}+\frac{m_c}{1+m_c}-2\Re e^{2\pi\sqrt{-1}j/n}\right) +\OO{|w|}
    \\
    &=&
    -\gamma_2^2\left(\frac{1+m_c}{m_c}\right)\left( \frac{1}{m_c}-\frac{1}{1+m_c}+2-2\Re e^{2\pi\sqrt{-1}j/n}\right) +\OO{|w|}
    .
  \end{EA}
Using again \eqref{eq:mc_equation} we have
  \begin{equation*}
    l_j
    =
    -\gamma_2^2\left(\frac{1+m_c}{m_c}\right)\left( \frac{|z|^2-1}{1+m_c}+2-2\Re e^{2\pi\sqrt{-1}j/n}\right) +\OO{|w|}.
  \end{equation*}
Since by \eqref{eq:mc_equivalences} $|m_c|\sim|1+m_c|\sim|\gamma_2|$, it is enough to show that $\Re \frac{|z|^2-1}{1+m_c}+2-2\Re e^{2\pi\sqrt{-1}j/n}>0$.
As the real part of $m_c$ is positive on $S_0$ we can see that
  \begin{equation*}
    \Re \frac{|z|^2-1}{1+m_c}+2-2\Re e^{2\pi\sqrt{-1}j/n}
    =
    (|z|^2-1)\frac{1+\Re m_c}{|1+m_c|^2}+2-2\Re e^{2\pi\sqrt{-1}j/n}
    \geq
    \tilde{C_0}
    .
  \end{equation*}
Thus, if we take $|w|$ small enough, we have 
  \begin{equation*}
    |l_j|
    \geq
    C_0
    ,\quad
    1\leq j\leq n.
  \end{equation*}
The proof is now complete.
\end{proof}

Thanks to Lemma~\ref{pr:gamma_norm_bound} we can rewrite \eqref{eq:lin_sys_3} for $w\in\tilde{S}_{\tilde{Q}_{\zeta}}$ as
\begin{equation}\label{eq:lin_sys_4}
  \Delta
  =
  \OO{\varphi^{2Q_{\zeta}}\Psi}+\OO{\Lambda^2}.
\end{equation}

Suppose that $\Lambda=\OO{\varphi^{Q_{\zeta}}\sqrt{\Psi}}$.
Then from the above equation for all $a\in \ZZ/n\ZZ$
\begin{equation*}
  \Delta_a
  =
  \OO{\varphi^{2Q_{\zeta}}\Psi}
  ,\quad
  \Delta'_a
  =
  \OO{\varphi^{2Q_{\zeta}}\Psi}.
\end{equation*}
Thus, we have the following result.
\begin{lem}\label{pr:range_separation}
  Let $\zeta>0$. Suppose that condition
  \begin{equation*}
    \Lambda
    \leq
    \alpha
  \end{equation*}
  holds on $\tilde{S}_{\tilde{Q}_{\zeta}}$ with $\zeta$-high probability.
  Then the implication
  \begin{equation*}
    \Lambda 
    =
    \OO{\varphi^{Q_{\zeta}}\sqrt{\Psi}}
    \quad \Rightarrow \quad
    \Lambda
    =
    \OO{\varphi^{2Q_{\zeta}}\Psi}
  \end{equation*}
  holds with $\zeta$-high probability $\forall w\in\tilde{S}_{\tilde{Q}_{\zeta}}$.
\end{lem}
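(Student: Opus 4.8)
The plan is simply to combine Theorem~\ref{thm:sce} with the linear-stability estimate of Lemma~\ref{pr:gamma_norm_bound} and then iterate the resulting bound once. Since by hypothesis the event $\{\Lambda\leq\alpha\}$ holds on $\tilde{S}_{\tilde{Q}_{\zeta}}$ with $\zeta$-high probability, condition \eqref{eq:alpha_condition} is in force, so Theorem~\ref{thm:sce} applies and the self-consistent equations \eqref{eq:sce_mg}--\eqref{eq:sce_mgc} hold with $\zeta$-high probability. On the event $\{\Lambda\leq\alpha\}$ the quantities $m_c$, $1+m_c$, $\LRI{m}{}{a}{G}$ and $\LRI{m}{}{a}{\Gc}$ are all comparable to $1$ and bounded away from $0$ (by \eqref{eq:mc_equivalences} together with \eqref{eq:prop_of_mc_close_functions_1}), so each of the terms $1/\LRI{m}{}{a}{G}$, $|z|^2/(1+\LRI{m}{}{a+1}{G})$, and so on appearing in \eqref{eq:sce_mg}--\eqref{eq:sce_mgc} can be Taylor-expanded around $m_c$ with quadratic remainder $\OO{\Lambda^2}$. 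Keeping the linear terms in $\Delta_a:=\LRI{m}{}{a}{G}-m_c$ and $\Delta'_a:=\LRI{m}{}{a}{\Gc}-m_c$, and cancelling the zeroth-order parts using that $m_c$ solves \eqref{eq:mc_equation}, one arrives at the linear system \eqref{eq:lin_sys_1}--\eqref{eq:lin_sys_2}, that is, $\Gamma\Delta=\OO{\varphi^{2Q_{\zeta}}\Psi}+\OO{\Lambda^2}$.

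The second step is to invert $\Gamma$: by Lemma~\ref{pr:gamma_norm_bound} we have $\|\Gamma^{-1}\|=\OO{1}$ uniformly on $\tilde{S}_{\tilde{Q}_{\zeta}}$, hence $\Delta=\OO{\varphi^{2Q_{\zeta}}\Psi}+\OO{\Lambda^2}$, which is precisely \eqref{eq:lin_sys_4} and holds with $\zeta$-high probability. It then remains to feed in the hypothesis $\Lambda=\OO{\varphi^{Q_{\zeta}}\sqrt{\Psi}}$: this turns the remainder $\OO{\Lambda^2}$ into $\OO{\varphi^{2Q_{\zeta}}\Psi}$, so $\Delta_a=\OO{\varphi^{2Q_{\zeta}}\Psi}$ and $\Delta'_a=\OO{\varphi^{2Q_{\zeta}}\Psi}$ for every $a\in\ZZ/n\ZZ$, and taking the maximum over $a$ gives $\Lambda=\OO{\varphi^{2Q_{\zeta}}\Psi}$ with $\zeta$-high probability, which is the claim.

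The only genuinely delicate point is the bookkeeping of the ``$\zeta$-high probability'' qualifier: one intersects the (finitely many) events on which \eqref{eq:alpha_condition}, the conclusions \eqref{eq:sce_mg}--\eqref{eq:sce_mgc} of Theorem~\ref{thm:sce}, and the underlying large-deviation bounds of Lemma~\ref{lem:lde} hold; that intersection is again a $\zeta$-high-probability event, at the cost of enlarging the constant $C$ in the definition. On that event every manipulation above --- the Taylor expansion, the cancellation via \eqref{eq:mc_equation}, and the inversion of $\Gamma$ --- is purely algebraic and deterministic, so there is no real obstacle here: the lemma is just the assertion that a single self-improving step of the bootstrap closes, and it follows immediately from the already-established stability bound \eqref{eq:lin_sys_4}.
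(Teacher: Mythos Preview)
Your proof is correct and follows essentially the same route as the paper: apply Theorem~\ref{thm:sce} under the assumption $\Lambda\leq\alpha$, Taylor-expand \eqref{eq:sce_mg}--\eqref{eq:sce_mgc} around $m_c$ to obtain $\Gamma\Delta=\OO{\varphi^{2Q_{\zeta}}\Psi}+\OO{\Lambda^2}$, invert $\Gamma$ via Lemma~\ref{pr:gamma_norm_bound}, and then absorb $\Lambda^2$ using the hypothesis $\Lambda=\OO{\varphi^{Q_{\zeta}}\sqrt{\Psi}}$. Your added paragraph on the high-probability bookkeeping is a helpful clarification that the paper leaves implicit.
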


We now consider the case of $w\in\tilde{S}_{\tilde{Q}_{\zeta}}$ with $\eta = \OO{1}$ and we show that $\Lambda\prec N^{-1/2}$ if $\eta$ is bounded away from zero.

\begin{lem}\label{pr:large_eta_estimate}
  For any $\zeta>0$  with $\zeta$-high probability
  \begin{equation*}
    \sup_{w\in\tilde{S}_{\tilde{Q}_{\zeta}}\cap\{\eta=\tau/2\}}\Lambda
    \leq
    \varphi^{Q_{\zeta}}\frac{1}{\sqrt{N}}.
  \end{equation*}
\end{lem}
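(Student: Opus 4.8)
Fix $\zeta>0$. On the set in the statement $w=E+\sqrt{-1}\tau/2$ with $0\le E\le\lambda_-(z)/2$ and $|w|\le\tau$, so $E=O(\tau)$; as $\eta=\tau/2$ is a fixed positive constant, $w\in S_{\tilde Q_\zeta}$ for $N$ large. The point is that at this fixed scale the a priori control is cheap: on the event $\Xi:=\{\|X\|\le 4\}$, which I would check has $\zeta$-high probability by a standard $\varepsilon$-net bound based on \eqref{eq:unif_decay_cond}, every minor satisfies $\|Y_z^{(\TT,\UU)}\|\le 10$, hence, for $|\TT|+|\UU|=O(1)$ and a constant $c_\tau>0$,
\begin{equation*}
  c_\tau\ \le\ \Im \LRI{G}{(\TT,\UU)}{a}{ii},\ \Im \LRI{\Gc}{(\TT,\UU)}{a}{ii}\ \le\ \bigl|\LRI{G}{(\TT,\UU)}{a}{ii}\bigr|,\ \bigl|\LRI{\Gc}{(\TT,\UU)}{a}{ii}\bigr|\ \le\ \tfrac{2}{\tau}.
\end{equation*}
I would then re-run the large-deviation estimates of Lemma~\ref{lem:lde} with these deterministic bounds (bounding $\Im\LRI{\Gc}{(\TT,\UU)}{a}{ii}\le 2/\tau$ instead of invoking $\Im m_c\sim\eta$): on $\Xi$ their right-hand sides become $O_\tau(\varphi^{Q_\zeta}N^{-1/2})$, and $\tfrac{1}{N\eta},\tfrac{|\TT|+|\UU|}{N\eta}=O_\tau(N^{-1})$.

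\textbf{Step 1.} Next I would repeat the computation of Theorem~\ref{thm:sce} — Lemma~\ref{lem:sce_schur} to unfold $1/\LRI{G}{}{a}{ii}$ and $1/\LRI{\Gc}{}{a}{ii}$, the large-deviation bounds above to drop the off-diagonal and fluctuation terms, Lemmas~\ref{lem:bord_guion_minor_estimate} and \ref{lem:minor_differences} to pass from minors to full quantities — but replacing the role of the hypothesis \eqref{eq:alpha_condition} by the deterministic bounds on $\Xi$ (the denominators appearing stay in $[\tfrac{1}{2},O_\tau(1)]$ because $\tau/2\le|w|\le\tau$ and $c_\tau\le|\LRI{\Gc}{(\TT,\UU)}{a}{ii}|\le 2/\tau$, not because $\Lambda$ is small). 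The conclusion I expect is: on $\Xi$, with $\zeta$-high probability, the vector $\vec m:=(\LRI{m}{}{1}{G},\dots,\LRI{m}{}{n}{G},\LRI{m}{}{1}{\Gc},\dots,\LRI{m}{}{n}{\Gc})$ lies in the compact region $R_\tau^{2n}$ with $R_\tau:=\{\zeta\in\CC:\Im\zeta\ge c_\tau,\ |\zeta|\le 2/\tau\}$, and satisfies the system \eqref{eq:sce_mg}--\eqref{eq:sce_mgc} with all right-hand sides replaced by $O_\tau(\varphi^{Q_\zeta}N^{-1/2})$.

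\textbf{Step 2.} The remaining input is a \emph{global} stability statement for the deterministic system. Let $F_w$ be the map on $\CC^{2n}$ whose zeros are the exact solutions of \eqref{eq:sce_mg}--\eqref{eq:sce_mgc}; note that $m_c(z,w)$ satisfies $\Im m_c\sim\eta=\tau/2\ge c_\tau$, so $(m_c,\dots,m_c)\in R_\tau^{2n}$. At $w=0$ the system forces, around the cycle $\ZZ/n\ZZ$, $1+v_{a+1}=|z|^2v_a$ and $1+v_{a-1}=|z|^2v_a$, so — since $|z|^2\ge(1+\delta)^2>1$ — its \emph{only} solution in all of $\CC^{2n}$ is the constant vector $v_a\equiv(|z|^2-1)^{-1}=m_c(z,0)$. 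Using this as an anchor, together with the fact that the derivative of $F_w$ at $(m_c,\dots,m_c)$ is the matrix $\Gamma$ of Lemma~\ref{pr:gamma_norm_bound} — uniformly invertible for $|w|\le\tau$ — I would derive, for $\tau$ small enough and all $|w|\le\tau$, the quantitative global statement: if $\vec\zeta\in R_\tau^{2n}$ and $F_w(\vec\zeta)=O_\tau(\epsilon)$, then $\vec\zeta=(m_c(z,w),\dots,m_c(z,w))+O_\tau(\epsilon)$. Applied to $\vec m$ with $\epsilon=\varphi^{Q_\zeta}N^{-1/2}$ this gives $\Lambda(z,E+\sqrt{-1}\tau/2)=\OO{\varphi^{Q_\zeta}N^{-1/2}}$ on $\Xi$, with $\zeta$-high probability, for each fixed $E$; since $E\mapsto\LRI{m}{}{a}{G},\LRI{m}{}{a}{\Gc}$ are $O_\tau(1)$-Lipschitz by Lemma~\ref{lem:derivative_estimates}, an $O_\tau(1)$-point net over the $O(\tau)$-long range of $E$ and a union bound upgrade this to the uniform statement of the lemma (the implied constant absorbed into $\varphi^{Q_\zeta}$ in the customary way).

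\textbf{The hard part.} Apart from the routine large-deviation, resolvent and net manipulations, the genuinely substantial point is the global stability in Step~2: one must exclude spurious near-solutions of the deterministic system throughout $R_\tau^{2n}$ — which, as $\tau\to0$, exhausts $\CC^{2n}$, so a naive compactness-plus-perturbation argument off $w=0$ is not enough. Its local version near $m_c$ is exactly Lemma~\ref{pr:gamma_norm_bound}; globalising it is what both dictates the restriction $|w|\le\tau$ and makes essential use of $|z|>1$ (through the explicit solvability at $w=0$). An alternative bookkeeping is to extract only the a priori bound $\Lambda\le\alpha$ from the steps above, then invoke Theorem~\ref{thm:sce} and Lemma~\ref{pr:gamma_norm_bound} verbatim and close as above; the analytic content is the same.
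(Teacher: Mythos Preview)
Your plan diverges from the paper's, and Step~2 contains a genuine gap: the global stability you want on $R_\tau^{2n}$ is actually \emph{false}. Besides $m_c$, the scalar cubic $m^{-1}+w(1+m)-|z|^2(1+m)^{-1}=0$ has, for $|w|\sim\tau$, a large root $m_\ast$ with $m_\ast^2\approx(|z|^2-1)/w$. On your set $\eta=\tau/2$, $|w|\le\tau$, one has $\arg w\in[\pi/6,\pi/2]$, so one branch of $m_\ast$ has $\arg m_\ast\in[3\pi/4,11\pi/12]$: thus $\Im m_\ast\sim|m_\ast|\sim\tau^{-1/2}$ and $\Re m_\ast<0$. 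Since your box only imposes $\Im\zeta\gtrsim\tau$ and $|\zeta|\lesssim\tau^{-1}$, the constant vector $(m_\ast,\dots,m_\ast)\in R_\tau^{2n}$ solves $F_w=0$ exactly. Hence ``$\vec\zeta\in R_\tau^{2n}$ and $F_w(\vec\zeta)=O(\epsilon)\Rightarrow\vec\zeta\approx(m_c,\dots,m_c)$'' fails, and anchoring at $w=0$ cannot rescue it: the perturbation terms $w(1+v)$ are of size $O(|w|\cdot\tau^{-1})=O(1)$ on $R_\tau$, not small --- which is precisely why this extra root survives. Your ``alternative bookkeeping'' faces the same obstruction, since extracting even $\Lambda\le\alpha$ from $\vec m\in R_\tau^{2n}$ and $F_w(\vec m)$ small is the same global question.

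The paper avoids this by importing two \emph{probabilistic} inputs that your deterministic Step~2 lacks. First, McDiarmid's inequality (with the Lipschitz bound from Lemma~\ref{lem:bord_guion_minor_estimate}), combined with the i.i.d.\ comparison of Lemma~\ref{lem:gaus_diff} and the exchangeability statement \cite[Lemma~14]{OrouSosh}, shows directly that every $\LRI{m}{}{a}{G},\LRI{m}{}{a}{\Gc}$ lies within $O(\varphi^{Q_\zeta}N^{-1/2})$ of the full trace $m_G$; this collapses the $2n$-dimensional system to the single scalar equation for $m_G$. Second, the weak convergence of Proposition~\ref{thm:sosh_orou_product} yields $\Re m_G>0$ for large $N$, which rules out $m_\ast$ (whose real part is negative) and guarantees $|1+m_G|\ge1$. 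With these in hand the paper can invoke the scalar stability \cite[Lemma~6.12]{BourYauYin}. To repair your argument you would need some substitute for the sign information $\Re m_G>0$ --- an a~priori constraint sharper than $\vec m\in R_\tau^{2n}$ --- together with a mechanism forcing all components of $\vec m$ to (nearly) coincide.
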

\begin{proof}
First of all recall that by \eqref{eq:bord_guion_minor_estimate}
  \begin{equation*}
    |\LRI{m}{}{a}{G}-\LRI{m}{(i,\emptyset)}{a}{G}|
    \leq
    \frac{4}{N\eta}
    ,\quad
    |\LRI{m}{}{a}{\Gc}-\LRI{m}{(i,\emptyset)}{a}{\Gc}|
    \leq
    \frac{4}{N\eta}.
  \end{equation*}
Therefore, $\LRI{m}{}{a}{G}$ and $\LRI{m}{}{a}{\Gc}$ as functions of the rows $x_{k},1\leq k\leq nN$ satisfy the condition \eqref{eq:mcdiarmid_condition} for any $w\in \tilde{S}_{\tilde{Q}_{\zeta}}\cap\{\eta=\tau/2\}$ and we can apply the McDiarmid's concentration inequality, so that
  \begin{equation*}
    \Pr{|\LRI{m}{}{a}{G}-\Me{\LRI{m}{}{a}{G}}|\geq t}
    \leq
    C e^{-c t^2 N}
  \end{equation*}
and similarly for $\LRI{m}{}{a}{\Gc}$.
If we take $t=c^{-1/2}\varphi^{\zeta/2}N^{-1/2}$ in the above inequality we get that for any $w\in \tilde{S}_{\tilde{Q}_{\zeta}}\cap\{\eta=\tau/2\}$
  \begin{equation*}
    |\LRI{m}{}{a}{G}-\Me{\LRI{m}{}{a}{G}}|
    =
    \OO{\frac{\varphi^{\zeta/2}}{\sqrt{N}}}
    ,\quad
    |\LRI{m}{}{a}{\Gc}-\Me{\LRI{m}{}{a}{\Gc}}|
    =
    \OO{\frac{\varphi^{\zeta/2}}{\sqrt{N}}}
  \end{equation*}
with $\zeta$-high probability.
In Lemma~\ref{lem:gaus_diff} we proved that
\begin{equation*}
    |\EE[\LRI{m}{}{a}{G}]-\Me{\LRI{m}{}{a}{\hat{G}}}|
    =
    \OO{\frac{\varphi^{\zeta/2}}{\sqrt{N}}}
    ,\quad
    |\EE[\LRI{m}{}{a}{\Gc}]-\Me{\LRI{m}{}{a}{\hat{\Gc}}}|
    =
    \OO{\frac{\varphi^{\zeta/2}}{\sqrt{N}}}
    ,
  \end{equation*}
where $\hat{G}$ and $\hat{\Gc}$ are the resolvent matrices corresponding to the matrix $\hat{X}$ having iid non-zero entries.
From \cite[Lemma~14]{OrouSosh} we know that
  \begin{equation*}
  \Me{\LRI{m}{}{a}{\hat{G}}}
  =
  \Me{m_{\hat{G}}}
  ,\quad
  \Me{\LRI{m}{}{a}{\hat{\Gc}}}
  =
  \Me{m_{\hat{\Gc}}}
  .
  \end{equation*}
Therefore,
\begin{equation}\label{eq:large_eta_1}
  |\LRI{m}{}{a}{G}-m_G|
  =
  \OO{\frac{\varphi^{\zeta/2}}{\sqrt{N}}}
  =
  \OO{\frac{\varphi^{Q_{\zeta}}}{\sqrt{N}}}
  ,\quad
  |\LRI{m}{}{a}{\Gc}-m_G|
  =
  \OO{\frac{\varphi^{\zeta/2}}{\sqrt{N}}}
  =
  \OO{\frac{\varphi^{Q_{\zeta}}}{\sqrt{N}}}
  ,
\end{equation}
where we used that $m_G=m_{\Gc}$ and that by definition of $Q_{\zeta}$ (see the proof of Lemma~\ref{lem:lde}) $Q_{\zeta}>\zeta$.
With the same argument as in Theorem~\ref{thm:sce} we can thus show that with $\zeta$-high probability
  \begin{equation*}
    \frac{1}{G_{ii}}
    =
    -w(1+m_G)+\frac{|z|^2}{1+m_G+\OO{\varphi^{Q_{\zeta}}N^{-1/2}}}+\OO{\varphi^{2Q_{\zeta}}N^{-1/2}}
    .
  \end{equation*}
Indeed,
\begin{equation*}
  \frac{1}{G_{ii}}
  =
   -w(1+\LRI{m}{(i(a),\emptyset)}{a-1}{\Gc})+\OO{\varphi^{2Q_{\zeta}}N^{-1/2}}+\frac{|z|^2}{1+\LRI{m}{(i(a),i(a))}{a+1}{G}+\OO{\varphi^{Q_{\zeta}}N^{-1/2}}}
   .
\end{equation*}
But from the relations \eqref{eq:bord_guion_minor_estimate} and \eqref{eq:large_eta_1} we have
\begin{EA}{ll}
  \LRI{m}{(i(a),\emptyset)}{a-1}{\Gc}
  &= m_G +(m_{\Gc}-m_G)+(\LRI{m}{}{a-1}{\Gc}-m_{\Gc})+(\LRI{m}{(i(a),\emptyset)}{a-1}{\Gc}-\LRI{m}{}{a-1}{\Gc})
  \\
  &=m_G + 0 + \OO{\frac{\varphi^{Q_{\zeta}}}{\sqrt{N}}} + \OO{\frac{1}{N}} 
  =
  m_G+\OO{\frac{\varphi^{Q_{\zeta}}}{\sqrt{N}}}
  ,
\end{EA}
and similarly $\LRI{m}{(i(a),i(a))}{a+1}{G}=m_G+\OO{\varphi^{Q_{\zeta}}N^{-1/2}}$.

Consider now the real part of $m_G$. 
For $E\leq \lambda_{-}(z)/2$ we decompose $\Re m_G$ in the following way
  \begin{equation*}
    \Re m_G
    =
    \frac{1}{nN}\sum_{j=1}^{nN}\frac{\lambda_j-E}{(\lambda_j-E)^2+\eta^2}
    =
    \frac{1}{nN}\sum_{j:\lambda_j<E}\frac{\lambda_j-E}{(\lambda_j-E)^2+\eta^2}
    +\frac{1}{nN}\sum_{j:\lambda_j\geq E}\frac{\lambda_j-E}{(\lambda_j-E)^2+\eta^2}
    .
  \end{equation*}
From the Proposition~\ref{thm:sosh_orou_product} (statement~(2)) we see that as $N$ goes to infinity the first term goes to zero
\begin{EA}{ll}
  |\frac{1}{nN}\sum_{j:\lambda_j<E}\frac{\lambda_j-E}{(\lambda_j-E)^2+\eta^2}|
  &\leq
  \frac{1}{nN}\sum_{j:\lambda_j<E}\frac{\lambda_{-}/2}{(\lambda_j-E)^2+\eta^2}
  \\
  &\leq
  \frac{1}{nN}\sum_{j:\lambda_j<E}\frac{\lambda_-}{2\eta^2}
  \\
  &\leq
  \frac{\lambda_-}{2\eta^2}\nu_{z,N}([0,\lambda_-/2])\rightarrow 0
  ,
\end{EA}
while the second is positive and bounded from below
\begin{EA}{ll}
  \frac{1}{nN}\sum_{j:\lambda_j>E}\frac{\lambda_j-E}{(\lambda_j-E)^2+\eta^2}
  &\geq
  \frac{1}{nN}\sum_{j:\lambda_-\leq \lambda_j\leq \lambda_+}\frac{\lambda_j-E}{(\lambda_j-E)^2+\eta^2}
  \\
  &\geq 
  \frac{1}{nN}\sum_{j:\lambda_-\leq \lambda_j\leq \lambda_+}\frac{\lambda_-/2}{(\lambda_+)^2+\eta^2}
  \\
  &=
  \frac{\lambda_-/2}{(\lambda_+)^2+\eta^2} \nu_{z,N}([\lambda_-,\lambda_+])\rightarrow \frac{\lambda_-/2}{(\lambda_+)^2+\eta^2}>0
  .
\end{EA}
Therefore, for $N$ large enough, $\Re m_G>0$, so that $|1+m_G| \geq 1$ and thus
\begin{equation*}
  \frac{|z|^2}{1+m_G+\OO{\varphi^{Q_{\zeta}}N^{-1/2}}}
  =
  \frac{|z|^2}{1+m_G}+\OO{\varphi^{Q_{\zeta}}N^{-1/2}}.
\end{equation*}
As in the proof of Theorem~\ref{thm:sce} in order to get the approximate equation for $G_{ii}$ we show that
\begin{equation*}
  |-w(1+m_G)+\frac{|z|^2}{1+m_G}|
\end{equation*}
is bounded from below.
Indeed,
\begin{equation*}
  \Im \left(w(1+m_G)-\frac{|z|^2}{1+m_G}\right)
  =
  E \Im m_G +\eta \Re(1+m_G)+\frac{|z|^2\Im m_G}{|1+m_G|^2}  
  \geq
  \eta.
\end{equation*}
Thus we easily deduce that
\begin{equation*}
  \frac{1}{m_G}
  =
  -w(1+m_G)+\frac{|z|^2}{1+m_G}+\OO{\varphi^{2Q_{\zeta}}N^{-1/2}}.
\end{equation*}
Now we can conclude as in \cite[Lemma~6.12]{BourYauYin} that 
\begin{equation*}
  \sup_{w\in  \tilde{S}_{\tilde{Q}_{\zeta}}\cap\{\eta=\tau/2\}}|m_G(z,w)-m_c(z,w)|
  =
  \OO{\varphi^{2Q_{\zeta}}N^{-1/2}}
\end{equation*}
with $\zeta$-high probability. The result follows using \eqref{eq:large_eta_1}.
\end{proof}

\begin{thm}\label{thm:continuity_argument}
  For any $\zeta>0$  
  \begin{equation}\label{eq:weak_concentration}
    \sup_{w\in\tilde{S}_{\tilde{Q}_{\zeta}}}\Lambda(z,w)
    =
    \OO{\varphi^{2Q_{\zeta}}\Psi}
  \end{equation}
holds with $\zeta$-high probability.
\end{thm}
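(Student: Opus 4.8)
The plan is to run a continuity (bootstrap) argument in the spectral parameter $w$, in the style of \cite[Section~6.3]{BourYauYin}: Lemma~\ref{pr:large_eta_estimate} supplies the starting point at $\eta=\tau/2$, Lemma~\ref{pr:range_separation} is the self-improving step, and the Lipschitz bound of Lemma~\ref{lem:derivative_estimates} transports the estimate between nearby parameters. Fix $\zeta>0$ and keep $\tilde{Q}_{\zeta}>6Q_{\zeta}$ as in Theorem~\ref{thm:sce}. First I reduce to a finite net: since every $w\in\tilde{S}_{\tilde{Q}_{\zeta}}$ has $\eta\geq\varphi^{\tilde{Q}_{\zeta}}/N\geq N^{-1}$ for $N$ large, Lemma~\ref{lem:derivative_estimates} shows that $\Lambda(z,\cdot)$ is $\OO{\eta^{-2}}=\OO{N^{2}}$-Lipschitz on $\tilde{S}_{\tilde{Q}_{\zeta}}$, and the same estimate controls the variation of $\Psi$. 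Hence on an $N^{-4}$-net $\mathcal{N}$ of $\tilde{S}_{\tilde{Q}_{\zeta}}$, of cardinality $\OO{N^{4}}$, both $\Lambda$ and $\Psi$ change by $\OO{N^{-2}}$ between neighbours, which is negligible next to $\varphi^{2Q_{\zeta}}\Psi\geq\varphi^{2Q_{\zeta}}N^{-1/2}$. It therefore suffices to prove \eqref{eq:weak_concentration} at every $w\in\mathcal{N}$ and then interpolate.

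The quantitative fact that makes the bootstrap self-sustaining is the separation of scales on $\tilde{S}_{\tilde{Q}_{\zeta}}$. If $\Lambda\leq\alpha$ at some $w\in\tilde{S}_{\tilde{Q}_{\zeta}}$, then, using $N\eta\geq\varphi^{\tilde{Q}_{\zeta}}$,
\begin{equation*}
  \Psi
  \leq
  \frac{1}{\sqrt{N}}+\sqrt{\frac{\alpha}{N\eta}}+\frac{1}{N\eta}
  \leq
  C\varphi^{-\tilde{Q}_{\zeta}/2},
\end{equation*}
and hence, because $\tilde{Q}_{\zeta}>6Q_{\zeta}$,
\begin{equation*}
  \varphi^{2Q_{\zeta}}\Psi\leq C\varphi^{-Q_{\zeta}}=\oo{1}
  ,\qquad
  \varphi^{Q_{\zeta}}\sqrt{\Psi}\leq C\varphi^{-Q_{\zeta}/2}=\oo{1}.
\end{equation*}
In particular, for $N$ large, any bound of the form $\Lambda\leq C\varphi^{2Q_{\zeta}}\Psi$ automatically yields $\Lambda\leq\alpha$ and places $\Lambda$ strictly below the threshold $\varphi^{Q_{\zeta}}\sqrt{\Psi}$ of Lemma~\ref{pr:range_separation}, with a ``gap'' of width $\gg N^{-2}$.

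Now fix the $\zeta$-high-probability event $\Xi=\Xi_{\zeta}$ on which the conclusion of Lemma~\ref{pr:large_eta_estimate} holds and on which, at all $w\in\mathcal{N}$ simultaneously, the conclusions of Theorem~\ref{thm:sce} and the pointwise implication of Lemma~\ref{pr:range_separation} hold whenever $\Lambda(z,w)\leq\alpha$ (a union bound over the $\OO{N^{4}}$ net points is harmless by the stretched-exponential tails in Lemma~\ref{lem:lde}; note only the pointwise form of the hypothesis $\Lambda\leq\alpha$ is needed, which the induction below maintains). Work on $\Xi$. Since $\tilde{S}_{\tilde{Q}_{\zeta}}$ is connected, every $w\in\mathcal{N}$ can be joined to the segment $\{\eta=\tau/2\}$ by a chain $w_{0},w_{1},\dots,w_{M}=w$ of net points, consecutive pairs at distance $\OO{N^{-4}}$ and staying in $\tilde{S}_{\tilde{Q}_{\zeta}}$; it is enough to control $\Lambda$ along such a chain. \emph{Base case:} by Lemma~\ref{pr:large_eta_estimate} and interpolation, $\Lambda(w_{0})\leq 2\varphi^{Q_{\zeta}}N^{-1/2}\leq\varphi^{Q_{\zeta}}\sqrt{\Psi(w_{0})}$, so Lemma~\ref{pr:range_separation} gives $\Lambda(w_{0})=\OO{\varphi^{2Q_{\zeta}}\Psi(w_{0})}$. \emph{Inductive step:} assume $\Lambda(w_{k})\leq C\varphi^{2Q_{\zeta}}\Psi(w_{k})$; by the previous paragraph this forces $\Lambda(w_{k})\leq\alpha$, and by the Lipschitz bound together with $\Psi(w_{k})\sim\Psi(w_{k+1})$,
\begin{equation*}
  \Lambda(w_{k+1})\leq\Lambda(w_{k})+\OO{N^{-2}}\leq 2C\varphi^{2Q_{\zeta}}\Psi(w_{k+1})+\OO{N^{-2}},
\end{equation*}
which, again by the scale separation, is $\leq\alpha$ and is $\OO{\varphi^{Q_{\zeta}}\sqrt{\Psi(w_{k+1})}}$. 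Feeding this into Lemma~\ref{pr:range_separation} at $w_{k+1}$ yields $\Lambda(w_{k+1})=\OO{\varphi^{2Q_{\zeta}}\Psi(w_{k+1})}$, closing the induction. Hence $\Lambda=\OO{\varphi^{2Q_{\zeta}}\Psi}$ at every net point, and interpolation off $\mathcal{N}$ gives \eqref{eq:weak_concentration} on $\Xi$.

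The delicate points are bookkeeping rather than new analysis. First, one needs a single $\zeta$-high-probability event supporting the pointwise implications of Theorem~\ref{thm:sce} and Lemma~\ref{pr:range_separation} at all net points at once — which is why the large-deviation bounds in Lemma~\ref{lem:lde} are stretched-exponentially, not merely polynomially, small. Second, one must check that the ``forbidden window'' $(\varphi^{2Q_{\zeta}}\Psi,\ \varphi^{Q_{\zeta}}\sqrt{\Psi})$ is genuinely open and wider than the net perturbation scale $N^{-2}$, uniformly on the region worked on; this is precisely why the argument is carried out on $\tilde{S}_{\tilde{Q}_{\zeta}}$, i.e. with $\eta$ bounded below by $\varphi^{\tilde{Q}_{\zeta}}/N$ for a large $\tilde{Q}_{\zeta}>6Q_{\zeta}$, rather than on all of $S_{0}$. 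Everything else is the Bourgade--Yau--Yin continuity scheme applied essentially verbatim.
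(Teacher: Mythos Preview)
Your proposal is correct and follows essentially the same approach as the paper: both run the Bourgade--Yau--Yin continuity scheme, starting from Lemma~\ref{pr:large_eta_estimate} at $\eta=\tau/2$, using the Lipschitz bound of Lemma~\ref{lem:derivative_estimates} to step along a fine net, and invoking Lemma~\ref{pr:range_separation} as the self-improving mechanism at each step. The paper descends in $\eta$ on an $N^{-K}$-grid and sketches the union bound, while you use an $N^{-4}$-net with explicit scale-separation bookkeeping; these are presentational differences only.
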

\begin{proof}
Following the approach used by Bourgade, Yau and Yin in \cite{BourYauYin}, we prove the theorem in two steps.
Firstly we show that  with $\zeta$-high probability the bound 
\begin{equation}\label{eq:weak_concentration1}
  \Lambda(z,w)
    =
    \OO{\varphi^{2Q_{\zeta}}\Psi}
\end{equation}
holds for $N^{-K}$-net in $\tilde{S}_{\tilde{Q}_{\zeta}}$ for $K>0$ big enough.
Next, we use the continuity properties of $\Lambda$ to extend the result to the whole set  $\tilde{S}_{\tilde{Q}_{\zeta}}$.

First of all we note that if $\Lambda \leq C \varphi^{2Q_{\zeta}}\Psi$ with $C\geq 1$, then 
\begin{equation}\label{eq:psi_weak_estimate}
  \Lambda
  \leq 
  C^2 \varphi^{4Q_{\zeta}}\left(\frac{1}{\sqrt{N}}+\frac{1}{N\eta}\right)
  .
\end{equation}
By definition 
\begin{equation*}
  \Psi
  =
  \frac{1}{\sqrt{N}}+\frac{1}{N\eta}+\sqrt{\frac{\Lambda}{N\eta}}
  .
\end{equation*}
Therefore, 
\begin{equation*}
  \Lambda \leq C \varphi^{2Q_{\zeta}}\Psi
  \quad
  \Leftrightarrow
  \quad
  \sqrt{\Lambda}\left(\sqrt{\Lambda}-C\frac{\varphi^{2Q_{\zeta}}}{\sqrt{N\eta}}\right)
  \leq
  C\varphi^{2Q_{\zeta}}\left(\frac{1}{\sqrt{N}}+\frac{1}{N\eta}\right)
  .
\end{equation*}
After some elementary calculations we get \eqref{eq:psi_weak_estimate}.

Suppose now that
\begin{equation*}
  \Lambda(E+\sqrt{-1}\eta)=\OO{\varphi^{2Q_{\zeta}}\Psi}
\end{equation*}
for $E+\sqrt{-1}\eta \in \tilde{S}_{\tilde{Q}_{\zeta}}$ such that $E+\sqrt{-1}(\eta-N^{-K})\in\tilde{S}_{\tilde{Q}_{\zeta}}$.
Then
\begin{EA}{rcl}
  \Lambda(E+\sqrt{-1}(\eta-N^{-K}))
  &\leq&
  \Lambda(E+\sqrt{-1}\eta) +|\Lambda(E+\sqrt{-1}(\eta-N^{-K}))-\Lambda(E+\sqrt{-1}(\eta))|
  \\
  &\leq&
  \OO{\varphi^{2Q_{\zeta}}\left(\frac{1}{\sqrt{N}}+\frac{1}{N\eta}\right)} + N^{-K}\sup_{w\in\tilde{S}_{\tilde{Q}_{\zeta}}}\max_{a\in \ZZ/n\ZZ}\left|\frac{\partial (|\LRI{m}{}{a}{G}-m_c|+|\LRI{m}{}{a}{\Gc}-m_c|)}{\partial \eta}\right|
  .
\end{EA}
According to Lemma~\ref{lem:derivative_estimates} if we take $K>0$ big enough then $\exists N_0\in\NN$ such that for all $N\geq N_0$
\begin{equation}\label{eq:weak_concentration2}
  \Lambda(E+\sqrt{-1}(\eta-N^{-K}))
  \leq
  \alpha
\end{equation}
and moreover
\begin{equation}\label{eq:weak_concentration3}
  \Lambda(E+\sqrt{-1}(\eta-N^{-K}))
  \leq
  \varphi^{Q_{\zeta}}\sqrt{\Psi(E+\sqrt{-1}(\eta-N^{-K}))}.
\end{equation}
Note that for the last inequality we used that $\tilde{Q}_{\zeta}>6 Q_{\zeta}$.
By \eqref{eq:weak_concentration2} we see that Lemma~\ref{pr:range_separation} can be applied to $\Lambda(E+\sqrt{-1}(\eta-N^{-K}))$, and by \eqref{eq:weak_concentration3} we see that
\begin{equation*}
  \Lambda(E+\sqrt{-1}(\eta-N^{-K}))
  =
  \OO{\varphi^{2Q_{\zeta}}\Psi(E+\sqrt{-1}(\eta-N^{-K}))}.
\end{equation*}

We showed that $\exists K>0$ such that if \eqref{eq:weak_concentration1} holds for $E+\sqrt{-1}\eta\in\tilde{S}_{\tilde{Q}_{\zeta}}$ with $\zeta$-high probability, then with $\zeta$-high probability \eqref{eq:weak_concentration1} holds for $E+\sqrt{-1}(\eta-N^{-K})$ with the same constant in $\OO{\, }$, as long as $E+\sqrt{-1}(\eta-N^{-K})\in\tilde{S}_{\tilde{Q}_{\zeta}}$.

Let $K>0$ and let $\Theta_N(K):=\{k N^{-K}+\sqrt{-1} l N^{-K} \,|\,k,l\in\ZZ\}\subset\CC$. 
From Lemma~\ref{pr:large_eta_estimate} we know that \eqref{eq:weak_concentration1} holds for any $w\in\tilde{S}_{\tilde{Q}_{\zeta}}\cap\{\eta = \tau/2\}$.

Starting from $w\in\tilde{S}_{\tilde{Q}_{\zeta}}\cap \Theta(K)$ which are close to $\{\eta=\tau/2\}$, we can step by step decrease the imaginary part of $w$ and  show that the bound \eqref{eq:weak_concentration1} holds for all $w\in\tilde{S}_{\tilde{Q}_{\zeta}}$ with $\zeta$-high probability.

We can finish the proof by using Lemma~\ref{lem:derivative_estimates} and continuity properties of $m_c$ to extend \eqref{eq:weak_concentration1} to the whole set $\tilde{S}_{\tilde{Q}_{\zeta}}$.
\end{proof}

\begin{cor}\label{cor:weak_concentration_entries}
Let $\TT,\UU\subset \{1,\ldots,nN\}$ such that $|\TT|+|\UU|\leq p$ for some $p>0$.  
For any $\zeta>0$ for any $w\in\tilde{S}_{\tilde{Q}_{\zeta}}$ with $\zeta$-high probability the following holds

  \begin{EA}{ll} \EAy \label{eq:weak_concentration_entries_1}
    \LRI{G}{(\TT,\UU)}{a}{ii}
    =
    \frac{1}{-w(1+m_c)}+\OO{\varphi^{2Q_{\zeta}}\frac{\Psi}{|w|}}
    ,
    &\quad
    \mbox{ if }
    \quad
    i(a)\in \UU
    ,
    \\ \EAy \label{eq:weak_concentration_entries_2}
    \LRI{G}{(\TT,\UU)}{a}{ii}
    =
    m_c+\OO{\varphi^{2Q_{\zeta}}\Psi}
    ,
    &\quad
    \mbox{ if }
    \quad
    i(a)\notin \UU
    ,
    \\ \EAy \label{eq:weak_concentration_entries_3}
    G_{kl}^{(\TT,\UU)}
    =
    \OO{\varphi^{2Q_{\zeta}}\frac{\Psi}{|w|}}
    ,
    &\quad
    \mbox{ if }
    \quad
    \{k,l\}\cap \UU\neq \emptyset
    ,
    k\neq l
    ,
    \\ \EAy \label{eq:weak_concentration_entries_4}
    G_{kl}^{(\TT,\UU)}
    =
    \OO{\varphi^{2Q_{\zeta}}\Psi}
    ,
    &\quad
    \mbox{ if }
    \quad
    \{k,l\}\cap \UU= \emptyset
    ,
    k\neq l
    .
  \end{EA}

\end{cor}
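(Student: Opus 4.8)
The plan is to bootstrap from Theorem~\ref{thm:continuity_argument}. That theorem already gives $\sup_{w\in\tilde{S}_{\tilde{Q}_{\zeta}}}\Lambda(z,w)=\OO{\varphi^{2Q_{\zeta}}\Psi}$ with $\zeta$-high probability, so in particular $|\LRI{m}{}{a}{G}-m_c|$ and $|\LRI{m}{}{a}{\Gc}-m_c|$ are all $\OO{\varphi^{2Q_{\zeta}}\Psi}$; moreover, since $\eta\geq\varphi^{\tilde{Q}_{\zeta}}/N$ with $\tilde{Q}_{\zeta}>6Q_{\zeta}$, one checks that $\varphi^{2Q_{\zeta}}\Psi=\oo{1}$ on $\tilde{S}_{\tilde{Q}_{\zeta}}$, so \eqref{eq:alpha_condition} holds there with $\zeta$-high probability. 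All four estimates are then obtained by re-running the Schur-complement manipulations used in the proof of Theorem~\ref{thm:sce}, now keeping track of the deleted index sets $\TT,\UU$, with the help of Lemmas~\ref{lem:sce_schur}, \ref{lem:lde}, \ref{lem:bord_guion_minor_estimate} and \ref{lem:prop_of_mc_close_functions}, all of which are already stated for arbitrary minors. Throughout one uses $|\TT|+|\UU|\leq p$ together with $\Psi\geq(N\eta)^{-1}$ to absorb the minor corrections $\tfrac{|\TT|+|\UU|}{N\eta}$ coming from Lemmas~\ref{lem:bord_guion_minor_estimate} and \ref{lem:lde} into $\OO{\varphi^{2Q_{\zeta}}\Psi}$.

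For the diagonal estimates \eqref{eq:weak_concentration_entries_1}--\eqref{eq:weak_concentration_entries_2} (assuming, as is implicit, that $i(a)\notin\TT$, so that the entry is not zero by convention) I would apply the first identity in \eqref{eq:sce_schur_1} to $\LRI{G}{(\TT,\UU)}{a}{ii}$, take the expectation with respect to $\yb_{i(a)}$, and bound the fluctuation of the resulting quadratic form by $\OO{\varphi^{Q_{\zeta}}\Psi}$ via \eqref{eq:lde_1}. Its partial expectation equals $\LRI{m}{(\TT i(a),\UU)}{a-1}{\Gc}+|z|^2\LRI{\Gc}{(\TT i(a),\UU)}{a}{ii}$, and the first summand is $m_c+\OO{\varphi^{2Q_{\zeta}}\Psi}$ by Lemma~\ref{lem:bord_guion_minor_estimate} and the first paragraph. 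If $i(a)\in\UU$, the second summand vanishes by the zeroing convention for $\Gc$, and inverting with \eqref{eq:prop_of_mc_close_functions_1} gives \eqref{eq:weak_concentration_entries_1}. If $i(a)\notin\UU$, that summand is the diagonal entry $\LRI{\Gc}{(\TT i(a),\UU)}{a}{ii}$ of a $\Gc$-resolvent whose column-deletion set contains $i(a)$; running the symmetric computation via \eqref{eq:sce_schur_2} on it, the analogous $|z|^2$-term is then a $G$-entry that is conventionally zero, so the $\Gc$-analogue of \eqref{eq:weak_concentration_entries_1} holds \emph{unconditionally} at every level, and substituting it, invoking \eqref{eq:mc_equation} and inverting with \eqref{eq:prop_of_mc_close_functions_3} produces \eqref{eq:weak_concentration_entries_2}. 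Hence no induction on $|\TT|+|\UU|$ is needed here. (Alternatively one may derive \eqref{eq:weak_concentration_entries_1}--\eqref{eq:weak_concentration_entries_2} from the $\TT=\emptyset$ cases \eqref{eq:sce_1}--\eqref{eq:sce_3} by removing the indices of $\TT$ one at a time through \eqref{eq:minor_differences_1}--\eqref{eq:minor_differences_2}, the corrections being of order $\Psi^2$.)

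For the off-diagonal estimates I would use the second identities in \eqref{eq:sce_schur_1}--\eqref{eq:sce_schur_2}. Take $k\neq l$; by \eqref{eq:sce_schur_1}, $G^{(\TT,\UU)}_{kl}=-w\,G^{(\TT,\UU)}_{kk}\,G^{(\TT k,\UU)}_{ll}\,(\yb_k^*\Gc^{(\TT kl,\UU)}\yb_l)$. The two diagonal factors are $\OO{1}$ or $\OO{|w|^{-1}}$, according as the index lies outside or inside $\UU$, by the diagonal estimates just proven; and by \eqref{eq:lde_1} the quadratic form equals its partial expectation, which (all cross terms and all terms linear in the $X$-entries averaging to zero because $k\neq l$) is $|z|^2\Gc^{(\TT kl,\UU)}_{kl}$, up to $\OO{\varphi^{Q_{\zeta}}\Psi}$. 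If $\{k,l\}\cap\UU\neq\emptyset$, this entry is conventionally zero and \eqref{eq:weak_concentration_entries_3} follows immediately. If $\{k,l\}\cap\UU=\emptyset$ (so both diagonal factors are $\OO{1}$), I would iterate: the same argument applied to $\Gc^{(\TT kl,\UU)}_{kl}$ through \eqref{eq:sce_schur_2} rewrites it as $-w\cdot\OO{1}\cdot\OO{1}\cdot(|z|^2 G^{(\TT kl,\UU kl)}_{kl}+\OO{\varphi^{Q_{\zeta}}\Psi})$, and so on, each round enlarging $|\TT|+|\UU|$ by $2$ and bringing out a factor $-w|z|^2$. Since $|w|\leq\tau$ on $\tilde{S}_{\tilde{Q}_{\zeta}}$ and $|z|\leq 6$, shrinking $\tau$ makes $C|w||z|^2\leq\rho<1$, so after $m=\OO{\log N}$ rounds the leftover term is at most $\rho^m$ times the trivial bound $\|\Gc^{(\cdots)}\|\leq\eta^{-1}\leq N$, hence $\oo{\varphi^{Q_{\zeta}}\Psi}$ once $m$ is a large enough multiple of $\log N$, while the accumulated large-deviation errors form a geometric series summing to $\OO{(1-\rho)^{-1}\varphi^{Q_{\zeta}}\Psi}$ (the geometric weights $\rho^j$ also kill the linear-in-$j$ growth of the $\tfrac{|\TT|+|\UU|}{N\eta}$ terms, and the union over the $\OO{\log N}$ uses of Lemma~\ref{lem:lde} is still $\zeta$-high probability). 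This yields \eqref{eq:weak_concentration_entries_4}.

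Almost everything above is Schur-complement and large-deviation bookkeeping of the kind already carried out for Theorem~\ref{thm:sce}. The one genuinely new ingredient — and the step I expect to be the main obstacle — is the off-diagonal iteration of the last paragraph: one must verify that it really contracts (this is where the restriction $|w|\leq\tau$ defining $\tilde{S}_{\tilde{Q}_{\zeta}}$ is essential), that allowing $\OO{\log N}$ extra deleted indices does not degrade \eqref{eq:lde_1} beyond a harmless polylogarithmic factor that \eqref{eq:weak_concentration_entries_4} can absorb into $\varphi^{2Q_{\zeta}}$, and that the union bound over the $\OO{\log N}$ auxiliary events preserves $\zeta$-high probability.
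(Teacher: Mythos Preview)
Your handling of \eqref{eq:weak_concentration_entries_1}--\eqref{eq:weak_concentration_entries_3} is essentially the paper's proof: Schur complement, large deviation via Lemma~\ref{lem:lde}, minor bookkeeping via Lemma~\ref{lem:bord_guion_minor_estimate}, and inversion via Lemma~\ref{lem:prop_of_mc_close_functions}.

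The genuine discrepancy is in \eqref{eq:weak_concentration_entries_4}. Your $\OO{\log N}$-round contraction iteration is both unnecessary and incorrectly analyzed. The paper dispatches \eqref{eq:weak_concentration_entries_4} in \emph{one} step by invoking the $\Gc$-analogue of \eqref{eq:weak_concentration_entries_3}: in the expression $G^{(\TT,\UU)}_{kl}=-w\,G^{(\TT,\UU)}_{kk}G^{(\TT k,\UU)}_{ll}\bigl(\yb_k^*\Gc^{(\TT kl,\UU)}\yb_l\bigr)$ with $\{k,l\}\cap\UU=\emptyset$, the partial expectation of the quadratic form is $|z|^2\Gc^{(\TT kl,\UU)}_{kl}$, and since $\{k,l\}\subset\TT kl$ this entry already falls under the $\Gc$-version of case \eqref{eq:weak_concentration_entries_3}, hence is $\OO{\varphi^{2Q_\zeta}\Psi/|w|}$. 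Combined with the fluctuation bound $\OO{\varphi^{2Q_\zeta}\Psi/|w|}$ from Lemma~\ref{lem:lde} and the $\OO{1}$ diagonal factors, this gives $G^{(\TT,\UU)}_{kl}=|w|\cdot\OO{1}\cdot\OO{1}\cdot\OO{\varphi^{2Q_\zeta}\Psi/|w|}=\OO{\varphi^{2Q_\zeta}\Psi}$ immediately.

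Your iteration step is also miscounted. After passing from $G^{(\TT,\UU)}_{kl}$ to $\Gc^{(\TT kl,\UU)}_{kl}$ and applying \eqref{eq:sce_schur_2}, the diagonal factors $\Gc^{(\TT kl,\UU)}_{kk}$ and $\Gc^{(\TT kl,\UU k)}_{ll}$ are \emph{not} $\OO{1}$ as you claim: since $k,l\in\TT kl$, the $\Gc$-analogue of \eqref{eq:weak_concentration_entries_1} makes them $\OO{|w|^{-1}}$. Moreover the resulting quadratic form $y_kG^{(\TT kl,\UU kl)}y_l^*$ has no $|z|^2G^{(\TT kl,\UU kl)}_{kl}$ contribution, because $k,l\in\TT kl$ kills those $G$-entries by the zeroing convention. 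So your ``iteration'' in fact terminates after one round with exactly the bound above; the contraction machinery, the $\OO{\log N}$ many extra deleted indices, and the attendant worries about degrading Lemma~\ref{lem:lde} and the union bound are all superfluous.
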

\begin{proof}
From \eqref{eq:sce_schur_1} 
\begin{equation*}
  \LRI{G}{(\TT,\UU)}{a}{ii}
  =
  \frac{1}{-w\left(1+\LRI{m}{(\TT i(a),\UU)}{a-1}{\Gc}+(1-\EE_{\yb_{i(a)}})\left[\yb_{i(a)}^* \Gc^{(\TT i(a),\UU)}\yb_{i(a)}\right]\right)}
  .
\end{equation*}
By \eqref{eq:bord_guion_minor_estimate} and Theorem~\ref{thm:continuity_argument}
\begin{equation*}
  |\LRI{m}{(\TT i(a),\UU)}{a-1}{\Gc}-m_c|
  =
  \OO{\varphi^{2Q_{\zeta}}\Psi}+\OO{\frac{|\TT|+|\UU|}{N\eta}}.
\end{equation*}
Also note that since $i(a)\in \UU$
\begin{equation*}
  \yb_{i(a)}^* \Gc^{(\TT i(a),\UU)}\yb_{i(a)}
  =
  \sum_{k,l=1}^{nN} \xo{x}_{ki(a)}\Gc^{(\TT i(a),\UU)}_{kl} x_{li(a)}
\end{equation*}
thus from \eqref{eq:lde_1} and the condition $|\TT|+|\UU|\leq p$ we get that with $\zeta$-high probability
\begin{equation*}
  \LRI{G}{(\TT,\UU)}{a}{ii}
  =
  \frac{1}{-w\left(1+m_c+\OO{\varphi^{2Q_{\zeta}}\Psi}\right)}.
\end{equation*}
From \eqref{eq:mc_equivalences} we obtain \eqref{eq:weak_concentration_entries_1}.

We now prove \eqref{eq:weak_concentration_entries_2}.
From \eqref{eq:sce_schur_1} we have
\begin{equation*}
  \LRI{G}{(\TT,\UU)}{a}{ii}
  =
  -w^{-1}\left(1+\LRI{m}{(\TT i(a),\UU)}{a-1}{\Gc}+(1-\EE_{\yb_{i(a)}})\left[\yb_{i(a)}^* \Gc^{(\TT i(a),\UU)}\yb_{i(a)}\right] +|z|^2 \LRI{\Gc}{(\TT i(a),\UU)}{a}{ii} \right)^{-1}.
\end{equation*}
Using again \eqref{eq:bord_guion_minor_estimate} and Theorem~\ref{thm:continuity_argument} we have that
\begin{equation*}
  \LRI{m}{(\TT i(a),\UU)}{a-1}{\Gc}
  =
  m_c +\OO{\varphi^{2Q_{\zeta}}\Psi}+\OO{\frac{|\TT|+|\UU|}{N\eta}}.
\end{equation*}
To estimate $(1-\EE_{\yb_{i(a)}})\left[\yb_{i(a)}^* \Gc^{(\TT i(a),\UU)}\yb_{i(a)}\right]$ we apply Lemma~\ref{lem:lde} together with \eqref{eq:weak_concentration_entries_1} and \eqref{eq:prop_of_mc_close_functions_2} so that
\begin{equation*}
  (1-\EE_{\yb_{i(a)}})\left[\yb_{i(a)}^* \Gc^{(\TT i(a),\UU)}\yb_{i(a)}\right]
  =
  \OO{\varphi^{2Q_{\zeta}}\frac{\Psi}{|w|}}.
\end{equation*}
Hence we have 
\begin{equation*}
  \LRI{G}{(\TT,\UU)}{a}{ii}
  =
  -\left(w(1+m_c) +\frac{|z|^2}{1+m_c}+\OO{\varphi^{2Q_{\zeta}}\Psi} \right)^{-1}
\end{equation*}
and from \eqref{eq:mc_equivalences} we get \eqref{eq:weak_concentration_entries_2}.

To show the last two estimates we note that from \eqref{eq:sce_schur_1}
\begin{equation*}
  G^{(\TT,\UU)}_{kl}
  =    
  -wG^{(\TT,\UU)}_{kk}G^{(\TT k,\UU)}_{ll}\left(\yb_k^*\Gc^{(\TT kl,\UU)}\yb_l\right).
\end{equation*}

Consider the case $\{k,l\}\subset\UU$.
Then from \eqref{eq:weak_concentration_entries_2}
\begin{equation*}
  G^{(\TT,\UU)}_{kl}
  =    
  \OO{w^{-1}\left(\yb_k^*\Gc^{(\TT kl,\UU)}\yb_l\right)}.
\end{equation*}
Note that in this case 
\begin{equation*}
  \yb_k^*\Gc^{(\TT kl,\UU)}\yb_l
  =
  \sum_{i,j=1}^{nN} \xo{x}_{ik}\Gc^{(\TT kl,\UU)}_{ij} x_{jl}
\end{equation*}
and therefore, using \eqref{eq:lde_1}, we conclude that with $\zeta$-high probability
\begin{equation*}
    G^{(\TT,\UU)}_{kl}
  =    
  \OO{\varphi^{2Q_{\zeta}}\frac{\Psi}{|w|}}.
\end{equation*}

With the same argument we can show \eqref{eq:weak_concentration_entries_3} for the other cases.

Similarly, if $\{k,l\}\cap\UU=\emptyset$, then 
\begin{equation*}
  G^{(\TT,\UU)}_{kl}
  =    
  \OO{w\left((1-\EE_{\yb_k \yb_l})\left[\yb_k^*\Gc^{(\TT kl,\UU)}\yb_l\right]+\Gc^{(\TT kl,\UU)}_{kl}\right)}.
\end{equation*}
From Lemma~\ref{lem:lde} with $\zeta$-high probability
\begin{equation*}
  (1-\EE_{\yb_k \yb_l})\left[\yb_k^*\Gc^{(\TT kl,\UU)}\yb_l\right]
  =
  \OO{\varphi^{2Q_{\zeta}}\frac{\Psi}{|w|}},
\end{equation*}
and thus from \eqref{eq:weak_concentration_entries_3} we can deduce \eqref{eq:weak_concentration_entries_4}.

\end{proof}

\subsection{Strong concentration}
\label{sec:strong_concentration}
In this section we improve the bound \eqref{eq:weak_concentration} on a subset of $\tilde{S}_{\tilde{Q}_{\zeta}}$ and prove Theorem~\ref{thm:concentration_main}.

Firstly, similarly to the argument used by Bourgade, Yau and Yin to obtain the second order self-consistent equation (see \cite[Lemma~7.2]{BourYauYin}), we use Theorem~\ref{thm:sce} and Corollary~\ref{cor:weak_concentration_entries} to linearise equations \eqref{eq:sce_5} by expanding $(\LRI{G}{}{a}{ii})^{-1}$ around $(\LRI{m}{}{a}{G})^{-1}$
\begin{EA}{ll}
  \frac{1}{\LRI{m}{}{a}{G}} -\frac{\LRI{G}{}{a}{ii}-\LRI{m}{}{a}{G}}{(\LRI{m}{}{a}{G})^2} +\OO{\varphi^{4Q_{\zeta}}\Psi^2}
  =&
  -w(1+\LRI{m}{}{a-1}{\Gc})+\frac{|z|^2}{1+\LRI{m}{}{a+}{G}} 
  \\
  &+w\left(\LRI{m}{}{a-1}{\Gc}-\LRI{m}{(i(a),\emptyset)}{a-1}{\Gc}\right)+\OO{\LRI{m}{}{a+1}{G}-\LRI{m}{(i(a),i(a))}{a+1}{G}} 
  \\
  &+w\LRI{\Zc}{}{a}{i}+\OO{\LRI{Z}{i(a)}{a}{i}}
  ,
\end{EA}
then taking the average over $i$ and expanding functions $(\LRI{m}{}{a}{G})^{-1}$ and $(1+\LRI{m}{}{a+1}{G})^{-1}$ around $m_c^{-1}$ and $(1+m_c)^{-1}$ respectively.
We end up with the following system of approximate linear equations with respect to $\Delta$ holding on $\tilde{S}_{\tilde{Q}_\zeta}$ with $\zeta$-high probability
\begin{EA}{lll}\EAy\label{eq:lin_sys_better_1}
    -\frac{\Delta_a}{m_c^2}+w\Delta'_{a-1}+\frac{|z|^2}{(1+m_c)^2}\Delta_{a+1}
    &=\,&
    \OO{\varphi^{4Q_{\zeta}}\Psi^2}+\left[w\LRI{\Zc}{}{a}{i}+\OO{\LRI{Z}{i(a)}{a}{i}}\right]
    \\
    &&+\left[w\left(\LRI{m}{}{a-1}{\Gc}-\LRI{m}{(i(a),\emptyset)}{a-1}{\Gc}\right)+\OO{\LRI{m}{}{a+1}{G}-\LRI{m}{(i(a),i(a))}{a+1}{G}}\right]
    ,\\ \EAy \label{eq:lin_sys_better_2}
    -\frac{\Delta'_a}{m_c^2}+w\Delta_{a+1}+\frac{|z|^2}{(1+m_c)^2}\Delta'_{a-1}
    &=&
    \OO{\varphi^{4Q_{\zeta}}\Psi^2}+\left[w\LRI{Z}{}{a}{i}+\OO{\LRI{\Zc}{i(a)}{a}{i}}\right]
    \\
    &&+\left[w\left(\LRI{m}{}{a+1}{G}-\LRI{m}{(\emptyset,i(a))}{a+1}{G}\right)+\OO{\LRI{m}{}{a-1}{\Gc}-\LRI{m}{(i(a),i(a))}{a-1}{\Gc}}\right]
    ,
\end{EA}
where by $[\cdot]$ we mean averaging over $i\in\{1,\ldots,N\}$.

We now give a heuristic argument of how the bound \eqref{eq:weak_concentration} can be improved.
Suppose that on a subset $\Sigma\subset\tilde{S}_{\tilde{Q}_{\zeta}}$ all the terms on the right-hand sides in the above system \eqref{eq:lin_sys_better_1}-\eqref{eq:lin_sys_better_2} are of order $\oo{\Psi}$ with high enough probability.
Then using the notation introduced in Section~\ref{sec:weak_concentration} we can rewrite the system \eqref{eq:lin_sys_better_1}-\eqref{eq:lin_sys_better_2} as
\begin{equation*}
  \Gamma \Delta
  =
  \oo{\Psi}
  .
\end{equation*}
From Lemma~\ref{pr:gamma_norm_bound} we can deduce that
\begin{equation}\label{eq:heuristic_1}
  \Lambda
  =
  \oo{\Psi}
  .
\end{equation}
If the estimate \eqref{eq:heuristic_1} holds with high enough probability, then we can use the union bound to show that it holds with high probability simultaneously on a $N^{-K}$-net in $\Sigma$ for $K>0$ big enough.
Then we can use continuity properties of $\Lambda$ to extend the estimate to the whole set $\Sigma$.
Therefore, to get a stronger estimate of $\Lambda$ it is enough to get a stronger estimate of the terms on the right-hand sides of \eqref{eq:lin_sys_better_1}-\eqref{eq:lin_sys_better_2}.
Next lemma shows which of these terms can be bounded by $\oo{\Psi}$.

\begin{lem}\label{pr:minor_differences_better}
  Let $\zeta>0$. Then with $\zeta$-high probability
  \begin{EA}{c}\EAy\label{eq:minor_differences_better_1}
    |\LRI{m}{(i(a),\emptyset)}{a-1}{\Gc}-\LRI{m}{}{a-1}{\Gc}|
    =
    \OO{\varphi^{5Q_{\zeta}}\frac{\Psi^2}{|w|}}
    ,\quad
    |\LRI{m}{(i(a),i(a))}{a+1}{G}-\LRI{m}{}{a+1}{G}|
    =
    \OO{\varphi^{5Q_{\zeta}}\Psi^2}
    ,
    \\\EAy\label{eq:minor_differences_better_2}
    |\LRI{m}{(\emptyset,i(a))}{a+1}{G}-\LRI{m}{}{a+1}{G}|
    =
    \OO{\varphi^{5Q_{\zeta}}\frac{\Psi^2}{|w|}}
    ,\quad
    |\LRI{m}{(i(a),i(a))}{a-1}{\Gc}-\LRI{m}{}{a-1}{\Gc}|
    =
    \OO{\varphi^{5Q_{\zeta}}\Psi^2}
    ,
  \end{EA}
for all $a\in\ZZ/n\ZZ$, $i\in\{1,\ldots,N\}$ and $w\in\tilde{S}_{\tilde{Q}_{\zeta}}$.
\end{lem}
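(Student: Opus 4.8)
\section*{Proof plan for Lemma~\ref{pr:minor_differences_better}}

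The plan is to treat all four estimates by one mechanism: rewrite each difference, via the minor--difference identities of Lemma~\ref{lem:minor_differences}, as a ratio whose denominator is pinned down by a Schur--complement formula (Lemma~\ref{lem:sce_schur}) and whose numerator is a block--restricted sum of products of resolvent entries which Cauchy--Schwarz together with the identity $\Gc^{*}\Gc=\Gc\Gc^{*}=\eta^{-1}\Im\Gc$ (cf.\ Lemma~\ref{lem:im_of_resolvent_diag_and_product}) bounds by $\eta^{-1}\Im$ of the very quadratic form appearing in the denominator. Everything is carried out on the $\zeta$-high probability event on which Theorem~\ref{thm:continuity_argument} and Corollary~\ref{cor:weak_concentration_entries} hold simultaneously for all the $\OO{nN}$ minors that occur; a union bound over polynomially many $\zeta$-high probability events is again $\zeta$-high probable, and on this event the argument is deterministic.

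Take the first estimate. Removing the column $i(a)$ by the first identity in \eqref{eq:minor_differences_2}, and using \eqref{eq:sce_schur_1} to identify $1+\yb_{i(a)}^{*}\Gc^{(i(a),\emptyset)}\yb_{i(a)}=-\bigl(w\,\LRI{G}{}{a}{ii}\bigr)^{-1}$, one gets
\[
  \LRI{m}{(i(a),\emptyset)}{a-1}{\Gc}-\LRI{m}{}{a-1}{\Gc}
  =
  -w\,\LRI{G}{}{a}{ii}\;\frac{1}{N}\sum_{k=1}^{N}\bigl(\Gc^{(i(a),\emptyset)}\yb_{i(a)}\bigr)_{k(a-1)}\bigl(\yb_{i(a)}^{*}\Gc^{(i(a),\emptyset)}\bigr)_{k(a-1)}.
\]
By Cauchy--Schwarz (extending each factor to the full vector norm) and the identity above, the sum over $k$ is at most $\eta^{-1}\Im\bigl(\yb_{i(a)}^{*}\Gc^{(i(a),\emptyset)}\yb_{i(a)}\bigr)=\eta^{-1}\Im\bigl(-(w\,\LRI{G}{}{a}{ii})^{-1}\bigr)$, so the left side is $\OO{|w|\,|\LRI{G}{}{a}{ii}|(N\eta)^{-1}\,\Im(-(w\LRI{G}{}{a}{ii})^{-1})}$. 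Now $\LRI{G}{}{a}{ii}=m_{c}+\OO{\varphi^{2Q_{\zeta}}\Psi}\sim 1$ by \eqref{eq:weak_concentration_entries_2} and \eqref{eq:mc_equivalences}, so $-(w\,\LRI{G}{}{a}{ii})^{-1}=-(wm_{c})^{-1}+\OO{\varphi^{2Q_{\zeta}}\Psi/|w|}$; writing $-(wm_{c})^{-1}=(1+m_{c})-|z|^{2}\bigl(w(1+m_{c})\bigr)^{-1}$ via \eqref{eq:mc_equation} and bounding $|\Im(w(1+m_{c}))^{-1}|\leq C\eta/|w|^{2}$ via \eqref{eq:prop_of_mc_close_functions_2} with $h_{1}=m_{c}$, we obtain $\Im(-(w\,\LRI{G}{}{a}{ii})^{-1})=\OO{\eta/|w|^{2}+\varphi^{2Q_{\zeta}}\Psi/|w|}$. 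Feeding this back gives the difference $=\OO{(N|w|)^{-1}+\varphi^{2Q_{\zeta}}\Psi(N\eta)^{-1}}$. The first estimate in \eqref{eq:minor_differences_better_2} (removing the row $i(a)$, for $G$) is identical with $G$ and $\Gc$ interchanged, using \eqref{eq:sce_schur_2} and $\LRI{\Gc}{}{a}{ii}\sim 1$.

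For the two double--deletion estimates one factors the removal of the pair $\{i(a)\}$ as a column step followed by a row step (or vice versa). The column step is handled by the entry identity \eqref{eq:minor_differences_1}: its contribution to the partial trace is, after Cauchy--Schwarz and Lemma~\ref{lem:im_of_resolvent_diag_and_product}, $\OO{(N|\LRI{G}{}{a}{ii}|)^{-1}\eta^{-1}\Im\LRI{G}{}{a}{ii}}=\OO{N^{-1}+\varphi^{2Q_{\zeta}}\Psi(N\eta)^{-1}}$. In the second step the denominator supplied by \eqref{eq:sce_schur_1}--\eqref{eq:sce_schur_2} is $-(w\,e)^{-1}$ with $e$ a once--deleted diagonal entry of the form $\LRI{\Gc}{(i(a),\emptyset)}{a}{ii}$ or $\LRI{G}{(\emptyset,i(a))}{a}{ii}$; this entry falls in the first case of Corollary~\ref{cor:weak_concentration_entries}, so $e=(-w(1+m_{c}))^{-1}+\OO{\varphi^{2Q_{\zeta}}\Psi/|w|}\sim|w|^{-1}$, hence $-(we)^{-1}=(1+m_{c})+\OO{\varphi^{2Q_{\zeta}}\Psi}$, whose imaginary part is $\OO{\eta+\varphi^{2Q_{\zeta}}\Psi}$; this step contributes $\OO{N^{-1}+\varphi^{2Q_{\zeta}}\Psi(N\eta)^{-1}}$ as well. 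Finally one invokes the elementary bounds valid on $\tilde{S}_{\tilde{Q}_{\zeta}}$: $\Psi\geq N^{-1/2}$, $\Psi\geq(N\eta)^{-1}$, and $|w|\leq E+\eta\leq(\lambda_{-}/2+1)(N\eta)\Psi$, which give $N^{-1}\leq\Psi^{2}$, $\Psi(N\eta)^{-1}\leq\Psi^{2}$, $(N|w|)^{-1}\leq\Psi^{2}/|w|$ and $\Psi(N\eta)^{-1}\leq\Psi^{2}/|w|$; absorbing $\varphi^{2Q_{\zeta}}$ into $\varphi^{5Q_{\zeta}}$ yields \eqref{eq:minor_differences_better_1}--\eqref{eq:minor_differences_better_2}.

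The main obstacle is purely the bookkeeping across the four cases: one must choose in each the correct minor--difference identity (column versus row removal, $G$ versus $\Gc$), pair it with the matching Schur formula so that the denominator carries the right power of $|w|$ (a full diagonal entry is $\sim 1$, so the denominator is $\sim|w|$ and one lands on the $\Psi^{2}/|w|$ bound, whereas a once--deleted entry from the first case of Corollary~\ref{cor:weak_concentration_entries} is $\sim|w|^{-1}$, so the denominator is $\sim 1$ and one lands on the $\Psi^{2}$ bound), and correctly recognise which once--deleted diagonal entry appears so that the appropriate case of the corollary applies. The analytic content --- the bound $|\Im(-(wm_{c})^{-1})|=\OO{\eta/|w|^{2}}$ coming from \eqref{eq:mc_equation} and \eqref{eq:prop_of_mc_close_functions_2} --- is short, and no new large--deviation input is needed beyond what is already packaged in Corollary~\ref{cor:weak_concentration_entries}.
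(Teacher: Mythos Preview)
Your argument is correct, and it is genuinely different from the paper's. The paper does not go through the Ward identity at all for this lemma: for the row-removal step in $|\LRI{m}{(i(a),i(a))}{a+1}{G}-\LRI{m}{(i(a),\emptyset)}{a+1}{G}|$ it expands the numerator in \eqref{eq:minor_differences_2} as a quadratic form in the entries of $y_{i(a)}$, applies the large-deviation bounds \eqref{eq:lde_thm_2}--\eqref{eq:lde_thm_3} directly, and then estimates the resulting Frobenius-type sum $\sum_{j,l}|\LRI{G}{(i(a),i(a))}{a+1}{kj}\LRI{G}{(i(a),i(a))}{a+1}{lk}|^{2}$ entry-by-entry using the off-diagonal bounds \eqref{eq:weak_concentration_entries_3}--\eqref{eq:weak_concentration_entries_4}; for the column-removal step it simply plugs \eqref{eq:weak_concentration_entries_4} into \eqref{eq:minor_differences_1}. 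Your route instead collapses the numerator via Cauchy--Schwarz and $\Gc^{*}\Gc=\eta^{-1}\Im\Gc$ into the imaginary part of the very quadratic form that the Schur formula identifies with a diagonal entry, so the only inputs you need are the diagonal estimates \eqref{eq:sce_2}/\eqref{eq:weak_concentration_entries_1}--\eqref{eq:weak_concentration_entries_2} (and their $\Gc$-analogues, which follow by symmetry) together with $\Im m_{c}\sim\eta$. This is shorter and avoids an explicit second call to large-deviation bounds; the paper's approach is more pedestrian but has the virtue of making the power counting visible term-by-term. One small point: Corollary~\ref{cor:weak_concentration_entries} is stated only for $G$, so when you invoke it for $\LRI{\Gc}{(i(a),\emptyset)}{a}{ii}$ you should either cite \eqref{eq:sce_2} directly (which gives exactly what you need) or remark that the $\Gc$-version is obtained by the obvious symmetry.
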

\begin{proof}
By definition
\begin{equation*}
  \LRI{m}{(i(a),i(a))}{a+1}{G}-\LRI{m}{}{a+1}{G}
  =
  \frac{1}{N}\sum_{k=1}^{N}(\LRI{G}{(i(a),i(a))}{a+1}{kk}-\LRI{G}{}{a+1}{kk})
  .
\end{equation*}
  We show that for all $a\in\ZZ/n\ZZ$, $i\in\{1,\ldots,N\}$ with $\zeta$-high probability
  \begin{equation*}
    \LRI{G}{(i(a),i(a))}{a+1}{kk}-\LRI{G}{(i(a),\emptyset)}{a+1}{kk}
    =
    \OO{\varphi^{4Q_{\zeta}}\Psi^2}.
  \end{equation*}
From \eqref{eq:minor_differences_2}
\begin{equation*}
  \LRI{G}{(i(a),i(a))}{a+1}{kk}-\LRI{G}{(i(a),\emptyset)}{a+1}{kk}
  =
  \frac{\left[\left(G^{(i(a),i(a))}y_{i(a)}^*\right) \left( y_{i(a)} G^{(i(a),i(a))} \right)\right]_{k(a+1)k(a+1)}}{1+y_{i(a)} G^{(i(a),i(a))} y_{i(a)}^{*}}.
\end{equation*}
Since with $\zeta$-high probability $y_{i(a)} G^{(i(a),i(a))} y_{i(a)}^{*}=\LRI{m}{(i(a),i(a))}{a+1}{G}+\OO{\varphi^{Q_{\zeta}}\Psi}$, the absolute value of the denominator is bounded from below.
Consider the numerator of the above equation
\begin{EA}{lcl}
  \left[\left(G^{(i(a),i(a))}y_{i(a)}^*\right) \left( y_{i(a)} G^{(i(a),i(a))} \right)\right]_{k(a+1)k(a+1)}
  &=&
  \sum_{j,l=1}^{nN} G^{(i(a),i(a))}_{k(a+1)j}\xo{y}_{i(a)j} y_{i(a)l} G^{(i(a),i(a))}_{lk(a+1)}
  \\
  &=&\sum_{j,l=1}^{N} \LRI{G}{(i(a),i(a))}{a+1}{kj}\xo{x}_{i(a)j(a+1)} x_{i(a)l(a+1)} \LRI{G}{(i(a),i(a))}{a+1}{lk}
  .
\end{EA}
From \eqref{eq:lde_thm_2}-\eqref{eq:lde_thm_3} with $\zeta$-high probability
\begin{equation*}
  \sum_{l=1}^{N} \LRI{G}{(i(a),i(a))}{a+1}{kj}\xo{x}_{i(a)j(a+1)} x_{i(a)l(a+1)} \LRI{G}{(i(a),i(a))}{a+1}{lk}
  \leq
  \frac{\varphi^{Q_{\zeta}}}{N}\sqrt{\sum_{j,l=1}^{N} \left|\LRI{G}{(i(a),i(a))}{a+1}{kj} \LRI{G}{(i(a),i(a))}{a+1}{lk}\right|^2}.
\end{equation*}
Using Corollary~\ref{cor:weak_concentration_entries} we can show that 
\begin{equation*}
  \sum_{j,l=1}^{N} \left|\LRI{G}{(i(a),i(a))}{a+1}{kj} \LRI{G}{(i(a),i(a))}{a+1}{lk}\right|^2
  =
  \OO{N^2 \varphi^{8Q_{\zeta}}\Psi^4} + \OO{N \varphi^{4Q_{\zeta}}\Psi^2} + \OO{1}.
\end{equation*}
with $\zeta$-high probability.
Therefore 
\begin{equation*}
    \frac{\varphi^{Q_{\zeta}}}{N}\sqrt{\OO{N^2 \varphi^{8Q_{\zeta}}\Psi^4} + \OO{N \varphi^{4Q_{\zeta}}\Psi^2} + \OO{1}}
    =
    \OO{\varphi^{5Q_{\zeta}}\Psi^2}.
\end{equation*}
From \eqref{eq:minor_differences_1} and Corollary~\ref{cor:weak_concentration_entries}
\begin{equation*}
  \LRI{G}{}{a+1}{kk}-\LRI{G}{(i(a),\emptyset)}{a+1}{kk}
  =
  \frac{G_{i(a)k(a+1)}G_{k(a+1)i(a)}}{G_{i(a)i(a)}}
  =
  \OO{\varphi^{4Q_{\zeta}}\Psi^2}
\end{equation*}
and so the first estimate is proven. The other estimates can be shown using a similar argument.
\end{proof}

We now want to improve the estimates of the terms $|w[\LRI{\Zc}{}{a}{i}]|$, $|[\LRI{Z}{i(a)}{a}{i}]|$, $|w[\LRI{Z}{}{a}{i}]|$ and $|[\LRI{\Zc}{i(a)}{a}{i}]|$. 
As in \cite{BourYauYin} we use Proposition~\ref{thm:abs_dec} (Abstract decoupling lemma from \cite{PillYin}) to bound these terms.
However, in the Abstract decoupling lemma we need to work with deterministic estimates, therefore the $\oo{\Psi}$ bound cannot be achieved using this method.
Due to this restriction, we introduce the following deterministic parameters
\begin{equation*}
  \tilde{\Lambda}
  :=
  \varphi^{2Q_1}\left(\frac{1}{\sqrt{N}}+\frac{1}{N\eta}\right)
  ,\quad
  \tilde{\Psi}
  :=
  \varphi^{2Q_1}\left(\frac{1}{\sqrt{N}}+\frac{1}{N\eta}+\frac{1}{\sqrt{\eta}N^{3/4}}\right).
\end{equation*}
By Theorem~\ref{thm:continuity_argument} we know that on $\tilde{S}_{\tilde{Q}_{1}}$ with $1$-high probability 
\begin{equation}\label{eq:determ_param_bounds}
  \Lambda
  =
  \OO{\tilde{\Lambda}}
  \quad
  \mbox{and}
  \quad
  \Psi
  =
  \OO{\tilde{\Psi}}
.  
\end{equation}
Thus, Theorem~\ref{thm:concentration_main} can be deduced from the following proposition.

\begin{lem}\label{pr:strong_abs_dec}
  For any $D>0$, for any $w\in\tilde{S}_{\tilde{Q}_{1}}\cap \{\eta\geq N^{-1/2}\}$  with probability at least $1-N^{-D}$
\begin{equation}\label{eq:concentration_main_1}
  \max_a \left|\frac{1}{N} \sum_{i=1}^{N} w\LRI{\Zc}{}{a}{i}\right|+\left|\frac{1}{N}\sum_{i=1}^{N} \LRI{Z}{(i)}{a}{i}\right|
  =
  \OO{\frac{\tilde{\Psi}}{N^{1/10}}}.
\end{equation}
\end{lem}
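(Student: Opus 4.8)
The plan is to deduce \eqref{eq:concentration_main_1} from the abstract decoupling lemma (Proposition~\ref{thm:abs_dec}), following the fluctuation--averaging scheme of Bourgade, Yau and Yin in \cite[Section~7]{BourYauYin}. Fix $a\in\ZZ/n\ZZ$; there are finitely many choices, so a union bound at the end costs nothing. Set $\EE_i:=\EE_{\yb_{i(a)}}$, $\QQ_i:=1-\EE_i$ and $S_i:=w\LRI{\Zc}{}{a}{i}$ (the contribution of $\LRI{Z}{(i)}{a}{i}$ is treated identically, with $\yb_{i(a)}$ replaced by the row $y_{i(a)}$). Since $\LRI{\Zc}{}{a}{i}=\Mes{\yb_{i(a)}}{\yb_{i(a)}^*\Gc\yb_{i(a)}}$ is already centred with respect to $\EE_i$, one has $\QQ_iS_i=S_i$, so that $[\QQ S]=\frac{1}{N}\sum_{i=1}^{N}w\LRI{\Zc}{}{a}{i}$ is precisely the quantity we must estimate. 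I would first perform a standard truncation of the entries of $X_1,\dots,X_n$ at scale $N^{1/\theta}$, permitted by \eqref{eq:unif_decay_cond} and harmless at the precision of Theorem~\ref{thm:concentration_main}; after it every $S_i$ is deterministically bounded by a fixed power of $N$, so hypothesis~(ii) of Proposition~\ref{thm:abs_dec} holds with $\mathcal{Y}=1$.

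For the exceptional event I would take $\Xi$ to be the intersection, over all $\TT,\UU\subset\{1,\dots,nN\}$ with $|\TT|+|\UU|\le 3p$, of the $\zeta$-high-probability events supplied by Theorem~\ref{thm:continuity_argument}, Corollary~\ref{cor:weak_concentration_entries} and Lemma~\ref{lem:lde}. An intersection of polynomially many $\zeta$-high-probability events is again $\zeta$-high-probability, so $\Pr{\Xi^{\complement}}\le N^{C}e^{-\varphi^{\zeta}}$, and since $\varphi^{\zeta}=(\log N)^{\zeta\log\log N}$ eventually dominates $c_0(\log N)^{3/2}p$ for fixed $p$, hypothesis~(iii) holds for $N$ large. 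It then remains to verify hypothesis~(i), with the choice $\mathcal{X}:=\varphi^{2Q_{\zeta}}\tilde{\Psi}$; note that $\tilde{\Psi}<1$ on $\tilde{S}_{\tilde{Q}_{1}}$, so $\mathcal{X}<1$ as required.

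Verifying (i) is the main obstacle. Fix $\AA\ni i$ with $|\AA|\le p$ and expand $\QQ_{\AA}S_i$ by iterating the resolvent minor identities of Lemma~\ref{lem:minor_differences}: whenever an index $j(b)$ is removed, $\QQ_j$ survives only on the correction $\Gc-\Gc^{(\cdots j(b)\cdots)}$, a ratio whose two off-diagonal resolvent factors are, \emph{on $\Xi$}, of size $\OO{\varphi^{2Q_{\zeta}}\Psi/|w|}$ or $\OO{\varphi^{2Q_{\zeta}}\Psi}$ by Corollary~\ref{cor:weak_concentration_entries}, with denominator bounded below. Iterating $|\AA|$ times, and using that the prefactor $w$ in $S_i$ together with the row/column placement of the deleted indices exactly compensates the accumulated powers of $|w|^{-1}$ --- this is where the block structure of $X$ (its many vanishing entries) forces the bookkeeping of \cite{BourYauYin} to be redone via the neighbouring-block form of Corollary~\ref{cor:weak_concentration_entries} --- one obtains $|1(\Xi)\QQ_{\AA}S_i|\le(C_0\mathcal{X}|\AA|)^{|\AA|}$. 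Off $\Xi$ the same expansion, bounded by the crude deterministic estimates $\|\Gc\|\le\eta^{-1}$, $\|\yb_{i(a)}\|\le N$ and $|G_{kk}^{(\TT,\UU)}|\ge N^{-C}$ (all valid after truncation, the last via \eqref{eq:sce_schur_1}), contributes at most $N^{C_0|\AA|}$; writing $1(\Xi)(\QQ_{\AA}S_i)=S_{i,\AA}+1(\Xi)\QQ_{\AA}1(\Xi^{\complement})\tilde{S}_{i,\AA}$ as in \eqref{eq:abs_dec_cond_i}, with $S_{i,\AA}$ the portion of the expansion controlled on $\Xi$, establishes (i) with $\mathcal{Y}=1$. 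The delicate points are the combinatorics (the number of terms grows like $(C|\AA|)^{|\AA|}$ while their size shrinks geometrically) and the $|w|$-power bookkeeping in the sparse matrix $X$.

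Granting (i)--(iii), Proposition~\ref{thm:abs_dec} gives, for any even integer $p$ bounded by a power of $\log N$, $\E|\frac{1}{N}\sum_{i=1}^{N}w\LRI{\Zc}{}{a}{i}|^{p}\le(Cp)^{4p}[\mathcal{X}^2+N^{-1}]^{p}\le(C'p^4\varphi^{4Q_{\zeta}}\tilde{\Psi}^2)^{p}$, using $\mathcal{X}^2=\varphi^{4Q_{\zeta}}\tilde{\Psi}^2\ge N^{-1}$. On $\tilde{S}_{\tilde{Q}_{1}}\cap\{\eta\ge N^{-1/2}\}$ the definition of $\tilde{\Psi}$ gives $\tilde{\Psi}\le 3\varphi^{2Q_1}N^{-1/2}\le N^{-1/4}$ for $N$ large, hence $\tilde{\Psi}^2\le N^{-1/4}\tilde{\Psi}$ and therefore $C'p^4\varphi^{4Q_{\zeta}}\tilde{\Psi}^2\le N^{-1/5}\tilde{\Psi}$ once $p$ is fixed and $N$ is large. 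Markov's inequality then bounds $\Pr{|\frac{1}{N}\sum_{i=1}^{N}w\LRI{\Zc}{}{a}{i}|>\tilde{\Psi}N^{-1/10}}$ by $\big(N^{1/10}N^{-1/5}\big)^{p}=N^{-p/10}$, which is at most $N^{-D}$ as soon as $p\ge 10D$. The same argument applied to $\{\LRI{Z}{(i)}{a}{i}\}_i$, followed by a union bound over $a\in\ZZ/n\ZZ$, yields \eqref{eq:concentration_main_1}.
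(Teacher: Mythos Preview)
Your overall strategy coincides with the paper's: apply the abstract decoupling lemma (Proposition~\ref{thm:abs_dec}) to the averages $[\QQ S]$, verify hypotheses (i)--(iii), and conclude by Markov's inequality with $p\ge 10D$. Your treatment of (ii) via truncation, your construction of $\Xi$ as the intersection of the high-probability events from Theorem~\ref{thm:continuity_argument}, Corollary~\ref{cor:weak_concentration_entries} and Lemma~\ref{lem:lde}, and your verification of (iii) are all correct and essentially identical to the paper's.

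The divergence is in the verification of (i), and here your argument has a gap. You propose to iterate the single-index minor identities of Lemma~\ref{lem:minor_differences} and assert that ``the prefactor $w$ \dots\ exactly compensates the accumulated powers of $|w|^{-1}$'', leading to $\mathcal{X}=\varphi^{2Q_{\zeta}}\tilde{\Psi}$ and $\mathcal{Y}=1$. This cancellation is not established, and the paper's explicit computation shows it does \emph{not} hold: one loses a factor $|w|^{-1/4}$ per index, so the correct parameters are $\mathcal{X}=\tilde{\Psi}/|w|^{1/4}$ and $\mathcal{Y}=|w|^{-1/4}$. Since $|w|$ can be as small as $N^{-1/2}$ on the domain in question, your $\mathcal{X}$ is too optimistic by a factor $N^{1/8}$, and the step ``$|1(\Xi)\QQ_{\AA}S_i|\le(C_0\mathcal{X}|\AA|)^{|\AA|}$'' is unjustified as written.

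The paper's method for (i) is also technically different from iterated minors: it applies the Schur complement formula once to the entire block $\AA$, writing
\[
-\frac{1}{w\,\LRI{\Gc}{(i(a),\emptyset)}{a}{ii}}
=
R_{ii}-\sum_{k,l\in\AA\setminus\{i\}} R_{ik}\Big(\tfrac{1}{\bfr}(I+R)^{-1}\Big)_{kl}R_{li},
\]
where $R_{kl}$ are bilinear forms in the rows $y_{j(a)}$ against the \emph{fully reduced} resolvent $G^{(i(a),\AA)}$ (hence independent of $\AA$). One then expands $(I+R)^{-1}$ as a finite Neumann series: the polynomial part is annihilated by $\QQ_{\AA}$ because each monomial involves at most $|\AA|-1$ of the rows, while the remainder $(-R)^{|\AA|-2}(I+R)^{-1}$ is bounded directly using $R_{kl}=\OO{\tilde{\Psi}}$ and $R_{ik},R_{ki}=\OO{\tilde{\Psi}/|w|}$. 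This gives the clean size $\OO{|w|^{-1/4}(|\AA|\tilde{\Psi}/|w|^{1/4})^{|\AA|}}$ and is what makes the $|w|$-bookkeeping tractable in the block-structured setting. Your Markov/Chebyshev endgame still goes through with these parameters (indeed the paper checks $N^{1/10}\tilde{\Psi}/|w|^{1/2}=\OO{N^{-1/10}}$ and $N^{1/10}/(N|w|^{1/4}\tilde{\Psi})=\OO{N^{-1/10}}$ on $\{\eta\ge N^{-1/2}\}$), so the missing ingredient in your proposal is precisely this Schur--Neumann decomposition together with the correct $(\mathcal{X},\mathcal{Y})$.
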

\begin{proof}[Proof of Theorem~\ref{thm:concentration_main}]
  Consider the system of approximate equations \eqref{eq:lin_sys_better_1}-\eqref{eq:lin_sys_better_2} on $\tilde{S}_{\tilde{Q}_1}\cap\{\eta\geq N^{-1/2}\}$.
Using the Lemmas~\ref{pr:minor_differences_better} and \ref{pr:strong_abs_dec} as well as the notation of Section~\ref{sec:weak_concentration}, we see that for  $w\in\tilde{S}_{\tilde{Q}_1}\cap\{\eta\geq N^{-1/2}\}$ the system
  \begin{equation*}
    \Gamma \Delta
    =
    \OO{\varphi^{4Q_1}\Psi^2}+\OO{\frac{\tilde{\Psi}}{N^{1/10}}}+\OO{\varphi^{5Q_1}\Psi^2}
  \end{equation*}
holds with probability at least $1-N^{-D}$ for $N$ big enough.
Note that we used the fact that $e^{-\varphi}N^D\rightarrow 0,\quad N\rightarrow \infty$.
From \eqref{eq:determ_param_bounds} and the definition of $\tilde{\Psi}$ for $w\in\tilde{S}_{\tilde{Q}_1}\cap\{\eta\geq N^{-1/2}\}$ we have that
\begin{equation*}
  \varphi^{5Q_1}\Psi^2
  =
  \oo{\frac{1}{\sqrt{N}}+\frac{1}{N\eta}+\frac{1}{\sqrt{\eta}N^{3/4}}}
  ,\quad
  \OO{\frac{\tilde{\Psi}}{N^{1/10}}}
  =
  \oo{\frac{1}{\sqrt{N}}+\frac{1}{N\eta}+\frac{1}{\sqrt{\eta}N^{3/4}}}
  .
\end{equation*}
Thus, due to Lemma~\ref{pr:gamma_norm_bound} for any $w\in\tilde{S}_{\tilde{Q}_1}\cap\{\eta\geq N^{-1/2}\}$ with probability at least $1-N^{-D}$
\begin{equation}\label{eq:strong_bound_1}
  \Lambda
  =
  \oo{\frac{1}{\sqrt{N}}+\frac{1}{N\eta}+\frac{1}{\sqrt{\eta}N^{3/4}}}
  .
\end{equation}
If we take $D\geq 8$, then using the union bound we can show that \eqref{eq:strong_bound_1} holds for a $N^{-3}$-net in $\tilde{S}_{\tilde{Q}_1}\cap\{\eta\geq N^{-1/2}\}$ with probability at least $1-N^{6-D}$.
By the continuity properties of $\Lambda$ stated in Lemma~\ref{lem:derivative_estimates} we can extend \eqref{eq:strong_bound_1} to the whole set $\tilde{S}_{\tilde{Q}_1}\cap\{\eta\geq N^{-1/2}\}$. Therefore, Theorem~\ref{thm:concentration_main} is proven.
\end{proof}
\begin{proof}[Proof of Lemma~\ref{pr:strong_abs_dec}]
We shall prove the bound for 
\begin{equation*}
  \left|\frac{1}{N}\sum_{i=1}^{N} \LRI{Z}{(i)}{a}{i}\right|,
\end{equation*}
and for the other terms the proof is similar.

Note that
\begin{equation*}
  \LRI{Z}{(i)}{a}{i}
  =
  (1-\EE_{y_{i(a)}})\left[\frac{1}{w\LRI{\Gc}{(i(a),\emptyset)}{a}{ii}}\right]
  .
\end{equation*}

Fix $a\in\ZZ/n\ZZ$. Following the notation used in the Proposition~\ref{thm:abs_dec}, we can take
\begin{equation*}
  \Ic:=\{1,\ldots,nN\}
  ,\quad
  \Ic_i:=\{i(a)\}
  ,\quad
  1\leq i\leq N
  ,
\end{equation*}
and 
\begin{equation*}
  \QQ_i
  =
  1-\EE_{y_{i(a)}}.
\end{equation*}

Suppose that the variables 
\begin{equation*}
  \frac{1}{w\LRI{\Gc}{(i(a),\emptyset)}{a}{ii}}
\end{equation*}
satisfy the conditions $(i)-(iii)$ of the Abstract decoupling lemma with 
\begin{equation*}
  \mathcal{X}
  =
  \frac{\tilde{\Psi}}{|w|^{1/4}}
  ,\quad
  \mathcal{Y}
  =
  \frac{1}{|w|^{1/4}}.
\end{equation*}

Then by Chebyshev inequality
\begin{equation*}
  \Pr{\left|\frac{1}{N}\sum_{i=1}^{N} \LRI{Z}{(i)}{a}{i}\right|\geq (Cp)^4\left(\frac{\tilde{\Psi}}{N^{1/10}}\right)}
  \leq
  \frac{(Cp)^{4p}\left(\frac{\tilde{\Psi}^2}{|w|^{1/2}}+\frac{1}{N|w|^{1/4}}\right)^p}{(Cp)^{4p}\left(\frac{\tilde{\Psi}}{N^{1/10}}\right)^p}
  .
\end{equation*}
We now estimate the RHS of the inequality. For $\{w\in\CC_+\,:\, \eta\geq N^{-1/2}\}$, we have 
\begin{equation*}
  \frac{N^{1/10}\tilde{\Psi}}{|w|^{1/2}}
  \leq
  N^{1/10}\varphi^{Q_{1}}\left(\frac{1}{\sqrt{N\eta}}+\frac{1}{N\eta^{3/2}}+\frac{1}{N^{3/4}\eta}\right)
  =
  \OO{\frac{N^{1/10}\varphi^{Q_{1}}}{N^{1/4}}}
  =
  \OO{\frac{1}{N^{1/10}}}
  .
\end{equation*}
Similarly we can obtain the estimate for the other term
\begin{equation*}
  \frac{N^{1/10}}{N|w|^{1/4}\tilde{\Psi}}
  \leq
  N^{1/10}\frac{1}{\sqrt{N}\eta^{1/4}+\eta^{-3/4}+N^{1/4}\eta^{-1/4}}  
  =
  \OO{\frac{1}{N^{1/10}}}.
\end{equation*}
Then the proposition is proven if we take $p=\lfloor10D\rfloor+1$.
Therefore it is enough to show that the conditions $(i)-(iii)$ of the Proposition~\ref{thm:abs_dec} are satisfied by the variables
\begin{equation*}
  \left\{\frac{1}{w\LRI{\Gc}{(i(a),\emptyset)}{a}{ii}}\right\}_{i=1}^{N}.
\end{equation*}

Firstly, we note that the uniform subexponential decay condition allows us to truncate the entries and thus we can easily show that with $\zeta$-high probability the condition $(ii)$ is satisfied.

Let $\Xi\subset \Omega$ be the set on which Lemma~\ref{lem:lde} and Corollary~\ref{cor:weak_concentration_entries} hold for all sets $\TT,\UU\subset \{1,\ldots,N\}$ with $|\TT|+|\UU|\leq 2p$ and the entries of the matrices $X_a$ are bounded by $N^C$. 
This set is of $\zeta$-high probability.

We  now verify that condition $(i)$ is satisfied on $\Xi$. 
Let $\AA\subset \{1,\ldots,N\},i\in \AA$ and suppose that on $\Xi$
\begin{equation}\label{eq:2}
  \frac{1}{w\LRI{G}{(i(a),\emptyset)}{a}{ii}}
  =
  S_{i,\AA}^{(1)}+S_{i,\AA}^{(2)}
\end{equation}
with $S_{i,\AA}^{(1)}=\OO{(C|\AA|\tilde{\Psi}|w|^{-1/4})^{|\AA|}}, S_{i,\AA}^{(2)}=\OO{N^{C|\AA|}}$ and $\QQ_{\AA}S_{i,\AA}^{(2)}=0$.
We now show that with this decomposition we can obtain decomposition in $(i)$.

Let $\Xi^*\subset\Omega$ be set a containing $\Xi$ and independent of $A$ (for example, cylindrical set $\pi^{-1}_{\Omega_A^c}\pi_{\Omega_A^c}\Xi$). Define
\begin{equation*}
  S_{i,\AA}
  :=
  \QQ_{\AA}1_{\Xi}S_{i,\AA}^{(1)}
  ,\quad
  \tilde{S}_{i,\AA}
  :=
  \frac{1}{w\LRI{G}{(i(a),\emptyset)}{a}{ii}} - 1_{\Xi^*\setminus \Xi} S_{i,\AA}^{(2)}
  .
\end{equation*}
Then
\begin{equation*}
  S_{i,\AA}+\QQ_{\AA}1_{\Xi^{\complement}}\tilde{S}_{i,\AA}
  =
  \QQ_{\AA}1_{\Xi}S_{i,\AA}^{(1)}+\QQ_{\AA}1_{\Xi^{\complement}}\left(\frac{1}{w\LRI{G}{(i(a),\emptyset)}{a}{ii}} - 1_{\Xi^*\setminus \Xi} S_{i,\AA}^{(2)}\right)
  =
  \QQ_{\AA}\frac{1}{w\LRI{G}{(i(a),\emptyset)}{a}{ii}}-\QQ_{\AA}1_{\Xi^*}S_{i,\AA}^{(2)}
  =
  \QQ_{\AA}\frac{1}{w\LRI{G}{(i(a),\emptyset)}{a}{ii}}
  .
\end{equation*}

The last thing to show is that in $\Xi$ decomposition \eqref{eq:2} holds for all $\AA\subset\{1,\ldots,N\}$ with $|\AA|\leq p$.
We begin with the cases $|\AA|=1$ and $|\AA|=2$. 
First of all, note that
\begin{equation*}
  \QQ_i \frac{1}{w\LRI{\Gc}{(i(a),\emptyset)}{a}{ii}}
  =
  \LRI{Z}{(i)}{a}{i}
  =
  \OO{\tilde{\Psi}},
\end{equation*}
so in this case 
\begin{equation*}
  \frac{1}{w\LRI{\Gc}{(i(a),\emptyset)}{a}{ii}}
  =
  \QQ_i\left[ \frac{1}{w\LRI{\Gc}{(i(a),\emptyset)}{a}{ii}}\right] +\E_i\left[ \frac{1}{w\LRI{\Gc}{(i(a),\emptyset)}{a}{ii}}\right]
\end{equation*}
and \eqref{eq:2} holds.

Now let $\AA=\{i,j\}$ and let
\begin{equation*}
  A=
  \begin{pmatrix}
    y_{i(a)}
    \\
    y_{j(a)}
  \end{pmatrix}.
\end{equation*}
Then by Schur complement formula we have that
  \begin{equation*}
    \LRI{\Gc}{(i(a),\emptyset)}{a}{kl}
    =
    \left(\frac{1}{-w(I+A\LRI{G}{(i(a),\AA)}{}{}A^*)}\right)_{kl}
    ,\quad
    k,l\in\{i,j\}.
  \end{equation*}
After calculating directly the $ii$th element of the inverse of the matrix we get
\begin{equation*}
  -\frac{1}{w\LRI{\Gc}{(i(a),\emptyset)}{a}{ii}}
  =
  \left(1+y_{i(a)}\LRI{G}{(i(a),\AA)}{}{}y_{i(a)}^*\right)
  - \frac{\left(y_{i(a)}\LRI{G}{(i(a),\AA)}{}{}y_{j(a)}^*\right)\left(y_{j(a)}\LRI{G}{(i(a),\AA)}{}{}y_{i(a)}^*\right)}{1+y_{j(a)}\LRI{G}{(i(a),\AA)}{}{}y_{j(a)}^*}
  .
\end{equation*}
Introducing the following notation
\begin{equation*}
    R_{ii}
    :=
    1+y_{i(a)}\LRI{G}{(i(a),A)}{}{}y_{i(a)}^*
    ,\quad
    R_{ij}
    :=
    y_{i(a)}\LRI{G}{(i(a),A)}{}{}y_{j(a)}^*
    ,\quad
    R_{ji}
    :=
    y_{j(a)}\LRI{G}{(i(a),A)}{}{}y_{i(a)}^*
\end{equation*}
we can rewrite the above equality as
\begin{equation}\label{eq:1}
  -\frac{1}{w\LRI{\Gc}{(i(a),\emptyset)}{a}{ii}}
  =
  R_{ii}-\frac{R_{ij} R_{ji}}{R_{jj}}.
\end{equation}
Using the estimates from the Corollary~\ref{cor:weak_concentration_entries} we get approximated values of the terms on the RHS of \eqref{eq:1}
\begin{equation*}
  1+y_{j(a)}\LRI{G}{(i(a),\AA)}{}{}y_{j(a)}^*
  =
  -\frac{1}{w m_c}+\OO{\frac{\tilde{\Psi}}{w}}
  ,
\end{equation*}
so that
\begin{equation*}
  \frac{1}{R_{ii}}
  =
  -wm_c+\OO{w\tilde{\Psi}}.
\end{equation*}
The terms $R_{ij}$ and $R_{ji}$ can be decomposed in the following way
\begin{EA}{c}
  R_{ij}
  =
  \sum_{k,l=1}^{N} \xo{x}_{i(a),k(a+1)} \LRI{G}{(i(a),\AA)}{a+1}{kl} x_{j(a),l(a+1)} -\xo{z}\sum_{k=1}^N \xo{x}_{i(a),k(a+1)}\LRI{G}{(i(a),\AA)}{a+1}{kj}
  =:
  R_{ij}^{(1)}+R_{ij}^{(2)}
  ,\\
  R_{ji}
  =
  \sum_{k,l=1}^{N} \xo{x}_{j(a),k(a+1)} \LRI{G}{(i(a),\AA)}{a+1}{kl} x_{i(a),l(a+1)} -z\sum_{k=1}^N x_{i(a),l(a+1)}\LRI{G}{(i(a),\AA)}{a+1}{kj}
  =:
  R_{ji}^{(1)}+R_{ji}^{(2)}
\end{EA}
and we observe that the terms $R_{ij}^{(2)}$ and $R_{ji}^{(2)}$ do not depend on $y_{j(a)}$. 

Therefore we have that
\begin{equation*}
  -\frac{1}{w\LRI{\Gc}{(i(a),\emptyset)}{a}{ii}}
  =
  R_{ii}-(R_{ij}^{(1)}+R_{ij}^{(2)})( R_{ji}^{(1)}+R_{ji}^{(2)})(-wm_c+\OO{w\tilde{\Psi}})
\end{equation*}
and
\begin{equation*}
  \QQ_j(R_{ii}+wm_cR_{ij}^{(2)}R_{ji}^{(2)})
  =
  0.
\end{equation*}
Using again Lemma~\ref{lem:lde_byy}, we obtain the following estimates
\begin{EA}{c}
  R_{ij}^{(1)}
  =
  \OO{\tilde{\Psi}}
  ,\quad
  R_{ji}^{(1)}
  =
  \OO{\tilde{\Psi}}
  ,\quad
  R_{ij}^{(2)}
  =
  \OO{\frac{\tilde{\Psi}}{|w|}}
  ,\quad
  R_{ji}^{(2)}
  =
  \OO{\frac{\tilde{\Psi}}{|w|}}
  .
\end{EA}
Note that if $\eta>N^{-1/2}$, then $\tilde{\Psi}|w|^{-1/2}=\oo{1}$. Using this fact we see that
\begin{equation*}
    -\frac{1}{w\LRI{\Gc}{(i(a),\emptyset)}{a}{ii}}-R_{ii} - wm_c R_{ij}^{(2)}R_{ji}^{(2)}
    =
    \OO{\tilde{\Psi}^2+\frac{\tilde{\Psi}^3}{|w|}}
    =
    \OO{\frac{\tilde{\Psi}^2}{\sqrt{|w|}}\left(\sqrt{|w|}+\frac{\tilde{\Psi}}{\sqrt{|w|}}\right)}
    =
    \OO{\frac{\tilde{\Psi}^2}{|w|^{1/2}}}
    ,
\end{equation*}
and so the case $|\AA|=2$ is verified.

Now we consider general $\AA\subset\{1(a),\ldots,N(a)\}$ with $i\in\AA$.  
Define matrices $A=(y_{k(a)})_{k\in\AA}\in \mathcal{M}_{|\AA|\times nN}(\CC)$ and $\tilde{A}=(y_{k(a)})_{k\in\AA\setminus i}\in\mathcal{M}_{|\AA|-1\times nN}(\CC)$. 
By the Schur complement formula we have
\begin{equation*}
  \left(\LRI{\Gc}{(i(a),\emptyset)}{a}{kl}\right)_{k,l\in\AA}
  =
  \left(\frac{1}{-w(I+A\LRI{G}{(i(a),\AA)}{}{}A^*)}\right)
  .
\end{equation*}
Similarly to the case $|\AA|=2$, we introduce notation
  \begin{equation*}
    R_{ii}
    :=
    1+y_{i(a)}\LRI{G}{(i(a),A)}{}{}y_{i(a)}^*
    ,\quad
    R_{ij}
    :=
    y_{i(a)}\LRI{G}{(i(a),A)}{}{}y_{j(a)}^*
    ,\quad
    R_{ji}
    :=
    y_{j(a)}\LRI{G}{(i(a),A)}{}{}y_{i(a)}^*
    ,\quad
    j\in A\setminus\{i\}.
  \end{equation*}
Using again Schur complement formula we have
  \begin{equation*}
    -\frac{1}{w\LRI{\Gc}{(i(a),\emptyset)}{a}{ii}}
    =
    R_{ii} - w\sum_{k,l\in \AA\setminus \{i\}} R_{ik} \left(\frac{1}{-w(I+\tilde{A}\LRI{G}{(i(a),\AA)}{}{}\tilde{A}^*)}\right)_{kl}  R_{li}
    .
  \end{equation*}
Define the matrix $R=(R_{kl})_{k,l\in\AA\setminus\{i\}}$ by
  \begin{equation*}
    I+\tilde{A}\LRI{G}{(i(a),\AA)}{}{}\tilde{A}^*
    =
    \bfr(I+R)
    ,
  \end{equation*}
 where
  \begin{equation*}
    \bfr
    :=
    1+\LRI{m}{(i(a),A)}{a+1}{G}-\frac{|z|^2}{w(1+\LRI{m}{(i(a),A)}{a-1}{\Gc})}
    \sim
    \frac{1}{|w|}.
  \end{equation*}
From Lemma~\ref{lem:lde}, \eqref{eq:weak_concentration_entries_1} and Theorem~\ref{thm:continuity_argument}, the entries of the matrix $\bfr R=I+\tilde{A}\LRI{G}{(i(a),\AA)}{}{}\tilde{A}^*-\bfr I$ are of size $\OO{\tilde{\Psi}|w|^{-1}}$, which implies that
\begin{EA}{c}
  R_{kl}
  =
  \OO{\tilde{\Psi}}
  ,\quad
  k,l\in \AA\setminus \{i\}
  ,\\
  R_{ik}
  =
  \OO{\frac{\tilde{\Psi}}{|w|}}
  ,\quad
  R_{ki}
  =
  \OO{\frac{\tilde{\Psi}}{|w|}}
  ,\quad
  k\in \AA\setminus \{i\}
  .
\end{EA}
Then we have
  \begin{equation*}
    -\frac{1}{w\LRI{\Gc}{(i(a),\emptyset)}{a}{ii}}
    =
    R_{ii} - w\sum_{k,l\in A\setminus i} R_{ik} \frac{1}{-wb}\left(I+\sum_{k=1}^{|\AA|-3}(-R)^k +(-R)^{|\AA|-2}(I+R)^{-1}\right)_{kl} R_{li}
    .
  \end{equation*}
Parameter $\bfr$ is independent of $A$ and each $R_{kl}$ is independent of $\{y_{j(a)}\, : \, j\in\AA\setminus\{k,l\}\}$.
Thus
  \begin{equation*}
    \QQ_{\AA}\left[ R_{ii} - w\sum_{k,l\in A\setminus i} R_{ik} \frac{1}{-wb}\left(I+\sum_{k=1}^{|\AA|-3}(-R)^k \right)_{kl} R_{li}\right]
    =
    0
    .
  \end{equation*}
Note that
  \begin{equation*}
    \frac{1}{-wb}(I+R)^{-1}
    =
    \left(\LRI{\Gc}{(i(a),i(a))}{a}{kl}\right)_{k,l\in A\setminus i}
  \end{equation*}
and thus 
  \begin{equation*}
     w\sum_{k,l\in A\setminus i} R_{ik} \frac{1}{-wb}\left((-R)^{|\AA|-2}(I+R)^{-1}\right)_{kl} R_{li}
     =
     \OO{\frac{1}{|w|^{1/4}}\left(\frac{|\AA|\tilde{\Psi}}{|w|^{1/4}}\right)^{|\AA|}},
  \end{equation*}
which finishes the proof.
\end{proof}

\begin{rem}
  If in the above proof we choose appropriately $p=p_N$ instead of fixed $p$, we can obtain a stronger bound in \eqref{eq:concentration_main_1}.
For example, we can show that for $\zeta>0$
\begin{equation*}
  \max_a \left|\frac{1}{N} \sum_{i=1}^{N} w\LRI{\Zc}{}{a}{i}\right|+\left|\frac{1}{N}\sum_{i=1}^{N} \LRI{Z}{(i)}{a}{i}\right|
  =
  \OO{\frac{\tilde{\Psi}^2}{\eta^{1/2}}+\frac{1}{N\eta^{1/4}}}
\end{equation*}
holds with $\zeta$-high probability.
\end{rem}

%====================================================================================
%qm Acknowledgement
%====================================================================================

\paragraph{Acknowledgements.}
I would like to thank my PhD advisors Mireille Capitaine and Michel Ledoux for introducing the problem to me, fruitful discussions and reading the manuscript.

%====================================================================================
%qm Appendix A. 
%====================================================================================
\appendix

\subsection*{Appendix: Proof of Lemma~\ref{lem:gaus_diff}}

  The proof relies on the decoupling formula first introduced in \cite{KhorKhorPast} (see also \cite{PastShch}, chapters 18 and 19) which we state below before proceeding to the proof. 

\begin{pr}\emph{(\cite{PastShch}, Proposition 18.1.4)}\label{pr:gaus_diff}
  Let $V$ be a random variable such that $\EE[|V|^{p+2}]<\infty$ for a certain nonnegative integer $p$.
  Then for any function $\Phi\, : \,\RR  \rightarrow \CC$ of the class $\mathcal{C}^{p+1}$ with bounded derivatives $\Phi^{(l)}, l=1,\ldots,p+1$, we have
  \begin{equation}
    \label{eq:gaus_diff_formula}
    \EE[V\Phi(V)]
    =
    \sum_{l=0}^p \frac{\kappa_{l+1}}{l!}\EE[\Phi^{(l)}(V)]+\epsilon_p
    ,
  \end{equation}
  where the remainder term $\epsilon_p$ admits the bound
  \begin{equation}
    \label{eq:gaus_diff_error}
    |\epsilon_p|
    \leq
    C_p \EE[|V|^{p+2}]\sup_{t\in\RR}|\Phi^{(p+1)}(t)|
  \end{equation}
  for a constant $C_p$ depending on $p$.
\end{pr}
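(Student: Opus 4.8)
The plan is to prove the identity by expanding both sides in the Taylor polynomial of $\Phi$ at the origin, cancelling the polynomial contributions against one another via the classical moment--cumulant recursion, and reading the error term off the integral form of Taylor's remainder. Throughout put $m_k:=\E[V^k]$ and recall that $\kappa_k=(-i)^k\left.\frac{d^k}{dt^k}\log\E[e^{itV}]\right|_{t=0}$. Since $\E[|V|^{p+2}]<\infty$, the moments $m_1,\dots,m_{p+1}$ are finite, the characteristic function $Z(t):=\E[e^{itV}]$ is of class $C^{p+1}$ with $Z^{(k)}(0)=i^k m_k$, and, $Z$ being continuous with $Z(0)=1$, the function $\log Z$ is well-defined and of class $C^{p+1}$ on a neighbourhood of $0$, so that $\kappa_1,\dots,\kappa_{p+1}$ make sense. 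The left-hand side of \eqref{eq:gaus_diff_formula} is finite because $|\Phi(v)|\le|\Phi(0)|+|v|\sup_t|\Phi'(t)|$ and $\E[V^2]<\infty$.

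The first step would be to establish the moment--cumulant recursion
\[
  m_{j+1}=\sum_{l=0}^{j}\binom{j}{l}\kappa_{l+1}\,m_{j-l},\qquad 0\le j\le p .
\]
It follows by differentiating $j$ times at $t=0$ the relation $Z'(t)=(\log Z)'(t)\,Z(t)$, which holds on a neighbourhood of $0$: the Leibniz rule together with $Z^{(k)}(0)=i^k m_k$ and $(\log Z)^{(l+1)}(0)=i^{l+1}\kappa_{l+1}$ gives the displayed identity after dividing by $i^{j+1}$.

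Next I would use Taylor's formula with integral remainder. Applied to $\Phi$, it reads $\Phi(v)=\sum_{k=0}^{p}\frac{\Phi^{(k)}(0)}{k!}v^k+\frac{v^{p+1}}{p!}\int_0^1(1-u)^p\Phi^{(p+1)}(uv)\,du$; multiplying by $v$ and taking expectations gives $\E[V\Phi(V)]=\sum_{k=0}^{p}\frac{m_{k+1}}{k!}\Phi^{(k)}(0)+\rho$ with $|\rho|\le\frac{1}{(p+1)!}\E[|V|^{p+2}]\sup_t|\Phi^{(p+1)}(t)|$. Applying the same expansion to each $\Phi^{(l)}$, $0\le l\le p$, to order $p-l$ (so that the remainder still involves only $\Phi^{(p+1)}$) gives $\E[\Phi^{(l)}(V)]=\sum_{k=0}^{p-l}\frac{m_k}{k!}\Phi^{(l+k)}(0)+\rho_l$ with $|\rho_l|\le\frac{1}{(p-l+1)!}\E[|V|^{p-l+1}]\sup_t|\Phi^{(p+1)}(t)|$. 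Substituting these expressions into $\sum_{l=0}^{p}\frac{\kappa_{l+1}}{l!}\E[\Phi^{(l)}(V)]$ and, for each $j$, collecting the coefficient of $\Phi^{(j)}(0)$, the recursion above shows that the polynomial parts add up to exactly $\sum_{j=0}^{p}\frac{m_{j+1}}{j!}\Phi^{(j)}(0)$. Consequently $\epsilon_p=\E[V\Phi(V)]-\sum_{l=0}^{p}\frac{\kappa_{l+1}}{l!}\E[\Phi^{(l)}(V)]=\rho-\sum_{l=0}^{p}\frac{\kappa_{l+1}}{l!}\rho_l$.

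It remains to bound $\epsilon_p$, which is the only genuinely delicate point. The contribution of $\rho$ is already of the asserted form. For the sum, one exploits that $\kappa_{l+1}$ is a universal polynomial in $m_1,\dots,m_{l+1}$ whose monomials $\prod_i m_{b_i}$ all satisfy $\sum_i b_i=l+1$; together with the Jensen bound $|m_b|\le\E[|V|^{p+2}]^{\,b/(p+2)}$ this gives $|\kappa_{l+1}|\le C_l\,\E[|V|^{p+2}]^{\,(l+1)/(p+2)}$, while $\E[|V|^{p-l+1}]\le\E[|V|^{p+2}]^{\,(p-l+1)/(p+2)}$. The two exponents add up to $1$, whence $\frac{|\kappa_{l+1}|}{l!}|\rho_l|\le C_l'\,\E[|V|^{p+2}]\sup_t|\Phi^{(p+1)}(t)|$, and summing over $0\le l\le p$ yields $|\epsilon_p|\le C_p\,\E[|V|^{p+2}]\sup_t|\Phi^{(p+1)}(t)|$, as claimed. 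The main obstacle is precisely this uniformity in the law of $V$: estimating $\kappa_{l+1}$ and $\E[|V|^{p-l+1}]$ crudely would leave an additive constant next to $\E[|V|^{p+2}]$, and it is the homogeneity of the cumulants combined with the Hölder/Jensen balancing of the moment exponents that removes it. (For $p=0$ the estimate also follows in one line from a symmetrization argument with $C_0=2$; a Fourier-analytic proof is possible too but naturally controls $\Phi^{(p+1)}$ in an $L^1$-type norm rather than in the sup norm.)
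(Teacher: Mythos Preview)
The paper does not supply a proof of this proposition: it is quoted verbatim from \cite{PastShch} (Proposition~18.1.4) and used as a black box in the appendix. There is therefore no ``paper's own proof'' to compare against.

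Your argument is correct. The three ingredients---the moment--cumulant recursion obtained by differentiating $Z'=(\log Z)'Z$, the Taylor expansions of $\Phi$ and of each $\Phi^{(l)}$ with integral remainder, and the homogeneity of the cumulant polynomials combined with Lyapunov's inequality to close the error bound---fit together exactly as you describe. The point you flag as delicate, namely that a crude bound on $|\kappa_{l+1}|$ and $\E[|V|^{p-l+1}]$ would produce an unwanted additive constant, is handled correctly by your observation that the exponents $(l+1)/(p+2)$ and $(p-l+1)/(p+2)$ sum to $1$. This is precisely the homogeneity argument used in standard proofs of this cumulant expansion (e.g.\ in \cite{PastShch} or the original \cite{KhorKhorPast}), so your approach is essentially the textbook one.
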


\begin{proof}[Proof of Lemma~\ref{lem:gaus_diff}]
  Define an interpolation matrix $X(s)=\sqrt{s}X+\sqrt{1-s}\hat{X}$ and the resolvent matrices $G^s(z,w)=((Y_z(s))^*Y_z(s)-w)^{-1}$ and $\Gc^s(z,w)=(Y_z(s)(Y_z(s)^*-w)^{-1}$, where  $Y_z(s)=X(s)-z$.
  Note that $G^0(z,w)=\hat{G}(z,w)$ and $G^{1}(z,w)=G(z,w)$.
  Denote the normalised partial traces of the matrices $G^s(z,w)$ and $\Gc^s(z,w)$ by $\LRI{m}{}{a}{G}(z,w,s)$ and  $\LRI{m}{}{a}{\Gc}(z,w,s)$.
  Then
  \begin{EA}{cl}
    \EE[\LRI{m}{}{a}{G}(z,w)-\LRI{m}{}{a}{\hat{G}}(z,w)]
    &=
    \EE[\LRI{m}{}{a}{G}(z,w,1)-\LRI{m}{}{a}{G}(z,w,0)]
    \\
    &=
    \EE[\int_0^1 \frac{\partial}{\partial s} \LRI{m}{}{a}{G}(z,w,s) ds]
    \\\EAy\label{eq:gaus_diff_1}
    &=
    \frac{1}{N}\sum_{i=1}^{N}\EE[\int_0^1 \frac{\partial}{\partial s} \LRI{G}{}{a}{ii}(z,w,s) ds]
  \end{EA}

  Denote by $\LRI{D}{}{b,b+1}{kl}$ the derivation with respect to $\LRI{x}{s}{b,b+1}{kl}$.
  Then from the definition of the matrices $G^s(z,w)$ and $\Gc^s(z,w)$ we have
  \begin{EA}{rl}
    \frac{\partial}{\partial s} \LRI{G}{}{a}{ii}(s)
    &=
    \sum_{b=1}^{n}\sum_{k,l=1}^{N} \LRI{D}{}{b,b+1}{kl} \LRI{G}{}{a}{ii}(s) \cdot \frac{\partial \LRI{x}{s}{b,b+1}{kl}}{\partial s}
    \\\EAy\label{eq:gaus_diff_2}
    &=
    \sum_{b=1}^{n}\sum_{k,l=1}^{N} \LRI{D}{}{b,b+1}{kl} \LRI{G}{}{a}{ii}(s) \cdot (\frac{1}{2\sqrt{s}} \LRI{x}{}{b,b+1}{kl} - \frac{1}{2\sqrt{1-s}} \LRI{\hat{x}}{}{b,b+1}{kl})
    \end{EA}
    Now we shall apply the Proposition~\ref{pr:gaus_diff} to the functions $\Phi_1$ and $\Phi_0$ defined by
    \begin{equation*}
      \Phi_1(\LRI{x}{}{b,b+1}{kl})
      :=
      \frac{1}{\sqrt{s}}\sum_{i=1}^{N} \LRI{D}{}{b,b+1}{kl} \LRI{G}{}{a}{ii}(s)
      ,
      \quad
      \Phi_0(\LRI{\hat{x}}{}{b,b+1}{kl})
      :=
      \frac{1}{\sqrt{1-s}}\sum_{i=1}^{N} \LRI{D}{}{b,b+1}{kl} \LRI{G}{}{a}{ii}(s)
    \end{equation*}
    From the differentiation formula \eqref{eq:gaus_diff_formula} we have that
    \begin{EA}{ll}
      \EE[\Phi_1(\LRI{x}{}{b,b+1}{kl})\LRI{x}{}{b,b+1}{kl}]
      =
      \kappa_1 \EE[\Phi_1(\LRI{x}{}{b,b+1}{kl})] + \kappa_2 \EE[\frac{\partial \Phi_1(\LRI{x}{}{b,b+1}{kl})}{\partial \LRI{x}{}{b,b+1}{kl}}] + \LRI{\epsilon}{1}{b}{kl}
      \\
      \EE[ \Phi_0(\LRI{\hat{x}}{}{b,b+1}{kl})\LRI{\hat{x}}{}{b,b+1}{kl}]
      =
      \hat{\kappa}_1 \EE[ \Phi_0(\LRI{\hat{x}}{}{b,b+1}{kl})] + \hat{\kappa}_2 \EE[ \frac{\partial \Phi_0(\LRI{\hat{x}}{}{b,b+1}{kl})}{\partial \LRI{\hat{x}}{}{b,b+1}{kl}}] + \LRI{\epsilon}{0}{b}{kl}
      ,
    \end{EA}
    where
    \begin{equation*}
      \LRI{\epsilon}{1}{b}{kl}
      \leq
      C \EE[|\LRI{x}{}{b,b+1}{kl}|^3] \sup_{t\in\RR} |\frac{\partial^2 \Phi_1}{\partial (\LRI{x}{}{b,b+1}{kl})^2 }(t)|
      ,\quad
      \LRI{\epsilon}{0}{b}{kl}
      \leq
      C \EE[|\LRI{\hat{x}}{}{b,b+1}{kl}|^3] \sup_{t\in\RR} |\frac{\partial^2 \Phi_0}{\partial (\LRI{\hat{x}}{}{b,b+1}{kl})^2 }(t)|
      .
    \end{equation*}
    The first two moments of the entries of $X$ and $\hat{X}$ are equal, therefore $\kappa_1=\hat{\kappa}_1=0$ and $\kappa_2=\hat{\kappa}_2$.
    Moreover, from the definition of the functions $\Phi_1$ and $\Phi_0$ we have that
    \begin{equation*}
      \frac{\partial \Phi_1(\LRI{x}{}{b,b+1}{kl})}{\partial \LRI{x}{}{b,b+1}{kl}}
      =
      \frac{\partial \Phi_0(\LRI{\hat{x}}{}{b,b+1}{kl})}{\partial \LRI{\hat{x}}{}{b,b+1}{kl}}
    \end{equation*}
    Therefore, 
    \begin{equation*}
      |\EE[\Phi_1(\LRI{x}{}{b,b+1}{kl})\LRI{x}{}{b,b+1}{kl}]-\EE[ \Phi_0(\LRI{\hat{x}}{}{b,b+1}{kl})\LRI{\hat{x}}{}{b,b+1}{kl}]|
      \leq
      |\LRI{\epsilon}{1}{b}{kl}|+|\LRI{\epsilon}{0}{b}{kl}|
      .
    \end{equation*}
    Recall that by \eqref{eq:gaus_diff_1} and \eqref{eq:gaus_diff_2} 
    \begin{equation*}
      \EE[\LRI{m}{}{a}{G}(z,w)-\LRI{m}{}{a}{\hat{G}}(z,w)]
      =
      \frac{1}{N} \int_0^1 ds \sum_{b=1}^n \sum_{k,l=1}^N (\EE[\Phi_1(\LRI{x}{}{b,b+1}{kl})\LRI{x}{}{b,b+1}{kl}] -\EE[ \Phi_0(\LRI{\hat{x}}{}{b,b+1}{kl})\LRI{\hat{x}}{}{b,b+1}{kl}])
    \end{equation*}
    and thus
    \begin{equation*}
      |\EE[\LRI{m}{}{a}{G}(z,w)-\LRI{m}{}{a}{\hat{G}}(z,w)]|
      \leq
      \frac{1}{N} \int_{0}^1 ds \sum_{b=1}^n \sum_{k,l=1}^N (|\LRI{\epsilon}{1}{b}{kl}|+|\LRI{\epsilon}{0}{b}{kl}|)
      .
    \end{equation*}
    Since
    \begin{equation*}
      \EE[|\LRI{x}{}{b,b+1}{kl}|^3]
      \sim
      \EE[|\LRI{\hat{x}}{}{b,b+1}{kl}|^3]
      \sim
      \frac{1}{N^{3/2}}
    \end{equation*}
    it will be enough to show that $\exists C>O$ such that for all $a,b\in \ZZ/n\ZZ$ and $k,l\in \llbracket 1,N\rrbracket$
    \begin{equation*}
      |\LRI{D}{3}{b,b+1}{kl} \sum_{i=1}^{N}\LRI{G}{}{a}{ii}|
      \leq
      C
      .
    \end{equation*}
    In order to obtain this bound, we differentiate the diagonal entries of the matrix $G$
    \begin{equation*}
      \LRI{D}{}{b,b+1}{kl} \LRI{G}{}{a}{ii}
      =
      - \LRI{G}{}{a,b+1}{il} \LRI{(Y_z G)}{}{ba}{ki} - \LRI{(GY^*_z)}{}{ab}{ik} \LRI{G}{}{b+1,a}{li}
      .
    \end{equation*}
    To identify the possible terms appearing in $\LRI{D}{3}{b}{kl} \sum_{i=1}^{N}\LRI{G}{}{a}{ii}$ we calculate the following derivatives
    \begin{EA}{ll}
      \LRI{D}{}{}{kl} \LRI{G}{}{}{ij}
      &=
      - \LRI{G}{}{}{il} \LRI{(Y_z G)}{}{}{ki} - \LRI{(GY^*_z)}{}{}{ik} \LRI{G}{}{}{lj}
      \\
      \LRI{D}{}{}{kl} \LRI{(Y_z G)}{}{}{ij}
      &=
      \delta_{ik}G_{lj}-(Y_z G)_{il}(Y_z G)_{kj} - (Y_z G Y_z^*)_{ik}G_{lj}
      \\
      \LRI{D}{}{}{kl} \LRI{(GY^*_z)}{}{}{ij}
      &=
      G_{il}\delta_{kj} - G_{il}(Y_z G Y^*)_{kj} - (G Y_z^*)_{ik}(G Y_z^*)_{lj}
      \\
      \LRI{D}{}{}{kl} \LRI{(Y_z GY^*_z)}{}{}{ij}
      &=
      \delta_{ik}(GY_z^*)_{lj} + (Y_z G)_{il}\delta_{kj} - (Y_z G Y_z^*)_{ik}(G Y_z^*)_{lj} - (Y_z G)_{il} (Y_z G Y_z^*)_{kj}
    \end{EA}
    We see that $\LRI{D}{3}{b}{kl} \LRI{G}{}{a}{ii}$ can be written as a finite sum of the terms $M^{(1)}_{iq_1}M^{(2)}_{q_2q_3}M^{(3)}_{q_4q_5}M^{(4)}_{q_6 i}$, where 
    \begin{equation*}
      M^{(p)}_{qq'}\in\{G_{qq'},(Y_z G)_{qq'}, (G Y^*_z)_{qq'}, (Y_z G Y_z^*)_{qq'}, \delta_{qq'}\}
    \end{equation*}
 and $q_1,q_2,q_3,q_4,q_5,q_6 \in \{k(b),l(b+1)\}$.
     Using this representation and the Cauchy-Schwarz inequality we conclude that it is sufficient to bound the spectral norms of the matrices $G$, $Y_z G$, $G Y^*_z$ and $Y_z G Y_z^*$.
     From the definition $G=(Y_z^* Y_z -w)^{-1}$ we see that 
     \begin{equation*}
       s^2_N(G) 
       = 
       \max_{t\in \mathrm{Sp}(Y_z^*Y_z)}\frac{1}{|t-w|^2}\leq \frac{1}{\eta^2}
       .
     \end{equation*}
     Similarly, $G^*Y_z^*Y_z G=(Y_z^*Y_z-\xoverline{w})^{-1}Y_z^*Y_z (Y_z^*Y_z-w)^{-1}$, so that 
     \begin{equation*}
       s^2_N(Y_z G)
       \leq 
       \sup_{t\in\RR_+}\frac{t}{|t-w|^2}=\frac{\sqrt{E^2+\eta^2}+E}{2\eta^2}
       .
     \end{equation*}
     To estimate the spectral norm of the last matrix we use the identities $Y_z G^*=\Gc^* Y_z$ and $G Y_z^*=Y_z^* \Gc$, that allow us to rewrite $Y_z G Y_z^*$ as $Y_z Y_z^* \Gc$. 
     Thus,
     \begin{equation*}
       s_N^2(Y_z G Y_z^*)
       =
       s_N(\Gc^*Y_zY_z^*Y_zY_z^*\Gc)
       =
       \sup_{t\in \RR_+}\frac{t^2}{(t-E)^2+\eta^2}
       =
       1+\frac{E^2}{\eta^2}
       .
     \end{equation*}
     This implies that if $|w|\leq C$ and $\eta$ is bounded away from zero, then $\LRI{D}{3}{b}{kl} \LRI{G}{}{a}{ii}$ is bounded. The Lemma is proven.
    \end{proof}

%qm Biblio
\bibliographystyle{plain}
\bibliography{bib}
%=============================================================================
\noindent
\\
\\
\textsc{Yuriy Nemish,}
\\
\textsc{\small{Institut de Mathématiques de Toulouse,
UMR 5219 du CNRS\\
Université de Toulouse, F-31062, Toulouse, France}}
\\
\textit{E-mail address:}
\texttt{yuriy.nemish@math.univ-toulouse.fr}

\end{document}